\newcommand{\tpitchfork}{%
  \vbox{
    \baselineskip\z@skip
    \lineskip-.52ex
    \lineskiplimit\maxdimen
    \m@th
    \ialign{##\crcr\hidewidth\smash{$-$}\hidewidth\crcr$\pitchfork$\crcr}
  }%
  
}
\newtheorem{prop}{Proposition}[section]
\newtheorem{lemma}[prop]{Lemma}
\newtheorem{cor}[prop]{Corollary}
\newtheorem{conjecture}[prop]{Conjecture}
\newtheorem{question}[prop]{Question}
\newtheorem{theorem}[prop]{Theorem}
\theoremstyle{remark}
\newtheorem{remark}[prop]{Remark}
\numberwithin{equation}{section}
\author{Yoshitaka Saiki, 
Hiroki Takahasi,
Kenichiro Yamamoto,
James A. Yorke
}
\address{Graduate School of Business Administration, Hitotsubashi University, Tokyo, 186-8601, JAPAN} 
\email{yoshi.saiki@r.hit-u.ac.jp}
 \address{ Keio Institute of Pure and Applied Sciences, Department of Mathematics,
 Keio University, Yokohama,
 223-8522, JAPAN} 
 \email{hiroki@math.keio.ac.jp}
\address{Department of General Education, Nagaoka University of Technology, Nagaoka 940-2188, JAPAN}
\email{k\_yamamoto@vos.nagaokaut.ac.jp}
\address{Institute for Physical Science and Technology and the Departments of Mathematics and Physics, University of Maryland College Park, MD 20742, USA} 
\email{yorke@umd.edu}
\date{}
\subjclass[2020]{37C05, 37C40, 37D25, 37D30, 37D35}
\thanks{{\it Keywords}: 
piecewise affine map; periodic point; measures of maximal entropy; phase transition; mixing; decay of correlations}
\thanks{}
\begin{document}
\title[The dynamics of the heterochaos baker maps]{The dynamics of
the heterochaos baker maps
}

\begin{abstract}
The heterochaos baker maps are piecewise affine maps of the unit square or cube introduced in [Nonlinearity {\bf 34}, 2021, 5744--5761], 
to provide a hands-on, elementary understanding of complicated phenomena in systems of large degrees of freedom. 
We review recent progress on a dynamical systems theory of the heterochaos baker maps, and present new results on properties of measures of maximal entropy and the underlying Lebesgue measure. 
We address several conjectures and questions that may illuminate new aspects of heterochaos and inspire future research.
\end{abstract}

\maketitle

\tableofcontents
\newpage
\section{Introduction}
\label{sec:introduction}

A natural approach to understanding complicated systems is to analyze 
a collection of 
simple 
models that retain some essential features of the complexity of the original systems.
Among a number of 
examples 
proposed so far as 
models of chaotic dynamics, the quadratic map \cite{May76} and the H\'enon map \cite{He76} played prominent roles 
in advancing our knowledge on chaos beyond uniform hyperbolicity. Thanks to their algebraically simple definitions, numerical experiments can be done, which hint good theorems, and in turn, rigorous results inspire new numerical experiments 
leading to further discoveries of new phenomena.
We would like to have 
such a 
fruitful 
model 
on which rigorous analysis and numerical experiments work together nicely.

 For multidimensional systems that are not uniformly hyperbolic, 
 the dynamics in some regions can be unstable in 
 more directions than 
 in other regions. This suggests that the dimension
 of unstable directions
 (i.e., the number of `positive Lyapunov exponents' counted with multiplicity)
 is not constant and varies from point to point.
 This situation is called
 {\it the unstable dimension variability}.
Certainly, the unstable dimension variability is  
universal  
in systems of large degrees of freedom, like weather models, and it is an intrinsic hurdle for understanding their global dynamics.

Due to the unstable dimension variability, different periodic orbits can have different unstable dimensions and they can be embedded densely in the same chaotic invariant sets.
For non-hyperbolic diffeomorphisms, 
this type of coexistence phenomenon was recognized 
 since the 60s 
\cite{AS70,mane_1978,shub_1971,Si72}. 
A unified mechanism (heterodimensional cycle and blender) for the robustness of the unstable dimension variability under perturbations was introduced in \cite{bonatti_1996,D95}. These theoretical results do not aim to specific systems, but aim to generic diffeomorphisms.
Since the 90s, various numerical results 
have been obtained 
which 
detect periodic orbits of different unstable dimensions in various specific systems (see e.g.,
\cite{dawson_1996,DGSY94,gritsun_2008,gritsun_2013,KKGOY,saiki_2018c}). 
 These and other numerical results have created among scientists, including applied mathematicians, a strong need for a simple, computably tractable model that displays
 the unstable dimension variability and associated interesting phenomena.

In \cite{STY21}, the first, second and the last authors 
 introduced {\it heterochaos baker maps},
 as minimal models to understand the unstable dimension variability.
They are piecewise affine maps of the unit square $[0,1]^2$ or the cube $[0,1]^3$, reminiscences of the baker map \cite{H56}.
In \cite{STY21} they proved that periodic points of these maps with different unstable dimensions are densely embedded in the phase spaces. Part of these periodic points were numerically detected, and the beautiful plots of them in \cite[Figure~5]{STY21} hint an underlying structure.
On the heterochaos baker maps we are witnessing a harmony 
of rigorous analysis and numerical works.
The term `heterochaos' was coined in \cite{STY21} 
that roughly means the coexistence of multiple, qualitatively different chaotic behaviors.

Since the appearance of the paper \cite{STY21}, interesting developments are subsequently taking place, see e.g., \cite{HK22,STY22,T23,TY23}.
In this paper we review recent progress on a dynamical systems theory of the heterochaos baker maps, and present new results.
We address several conjectures and questions that may illuminate new aspects of heterochaos and inspire future research.

\subsection{The heterochaos baker maps}\label{hetero-def}
Let $M\geq2$ be an integer, which is fixed throughout.
 For each $a\in (0,\frac{1}{M})$
 define a map $\tau_a\colon[0,1]\to[0,1]$ by 
\[\tau_a(x_u)=\begin{cases}\vspace{1mm}
\displaystyle{\frac{x_u-(k-1)a}{a}}&\text{ on }[(k-1)a,ka),\ k=1,\ldots,M,\\ \displaystyle{\frac{x_u-Ma}{1-Ma}}&\text{ on }
[Ma,1].
\end{cases}\]
The heterochaos baker maps $(f_{a})$, $(f_{a,b})$ are skew product maps over the piecewise affine fully branched map $\tau_a$.
We introduce a set of $2M$ symbols
 \begin{equation}\label{D}D=\{\alpha_1,\ldots,\alpha_M\}\cup\{\beta_1,\ldots,\beta_M\}.\end{equation}
For each symbol $\gamma\in D$, we define a rectangular domain $\Omega_\gamma^+$ in $[0,1]^2$ by
\[\Omega_{\alpha_k}^+=\left[(k-1)a,ka\right)\times
\left[0,1\right]\text{ for }k=1,\ldots,M, 
\]
and
\[\Omega_{\beta_k}^+=\begin{cases}\vspace{1mm}
\left[Ma,1\right]\times
\displaystyle{\left[\frac{k-1}{M },\frac{k }{M }\right)}&
\text{ for }k=1,\ldots,M-1,\\
\left[Ma,1\right]\times
\displaystyle{\left[\frac{M-1 }{M},1\right]}&\text{ for }k=M.
\end{cases}\]
Note that these domains are pairwise disjoint and
cover the entire square $[0,1]^2$.
We now
define $f_a\colon[0,1]^2\to[0,1]^2$  by
\[\begin{split}
  f_a(x_u,x_c)=
  \begin{cases}
    \displaystyle{\left(\tau_a(x_u),\frac{x_c}{M}+\frac{k-1}{M}\right)}&\text{ on }\Omega_{\alpha_k}^+,\ k=1,\ldots,M,\\
   \displaystyle{\left (\tau_a(x_u),Mx_c-k+1\right)}&\text{ on }\Omega_{\beta_k}^+,\ k=1,\ldots,M.
   \end{cases}
\end{split}\]
See \textsc{Figure}~\ref{hetero2} for the case $M=2$.

Put
$\Omega_\gamma=\Omega_\gamma^+\times
\left[0,1\right]$ for each $\gamma\in D$, and set
$\Delta=\left(0,\frac{1}{M}\right)^2.$
For each $(a,b)\in \Delta$
we define $f_{a,b}\colon [0,1]^3\to[0,1]^3$ by 
\[\begin{split}
  f_{a,b}(x_u,x_c,x_s)=
  \begin{cases}
    (f_{a}(x_u,x_c),
    (1-Mb)x_s)&\text{ on }\Omega_{\alpha_k},\ 
    k=1,\ldots,M,\\
   \displaystyle{\left (f_{a}(x_u,x_c),bx_s+1+b(k-M-1)\right)}&\text{ on }\Omega_{\beta_k},\ k=1,\ldots,M.
   \end{cases}
\end{split}\]
See \textsc{Figure}~\ref{hetero3} for the case $M=2$. Clearly $f_a$ is the projection of $ f_{a,b}$ to the $(x_u,x_c)$-plane.
 Note that $f_{a,b}$ is one-to-one except on the points in the boundaries of the domains $\Omega_\gamma$ where it is discontinuous and at most three-to-one. 
If there is no fear of confusion, we will often omit $a$, $b$ and write $f$ for simplicity.

\begin{figure}
\begin{center}
\includegraphics[height=4.4cm,width=12cm]{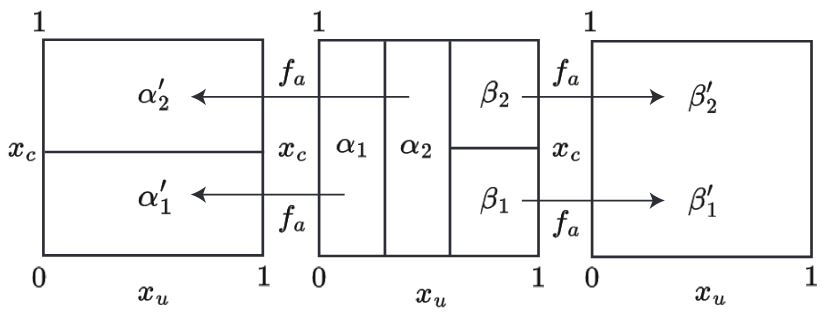}
\caption
{The heterochaos baker map $f_{a}$ with $M=2$. For each $\gamma\in D$,
the domain $\Omega_\gamma^+$  and its image are labeled with $\gamma$ and $\gamma'$ respectively: $f_a(\Omega_{\beta_1}^+)=[0,1]\times[0,1)$ and $f_a(\Omega_{\beta_2}^+)=[0,1]^2$.}\label{hetero2}
\end{center}
\end{figure}
 
 \begin{figure}[b]
\begin{center}
\includegraphics[height=4.8cm,width=13cm]{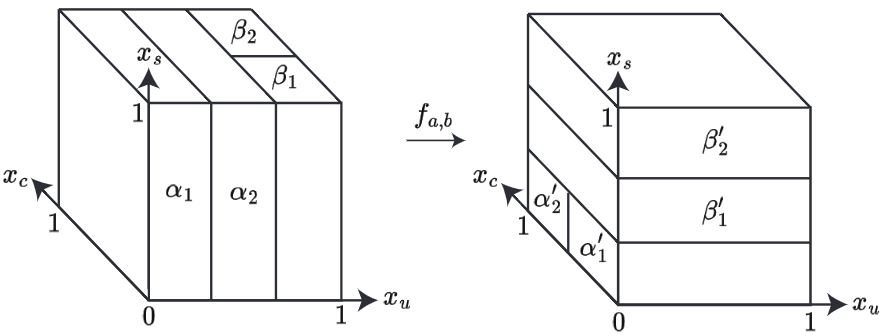}
\caption
{The heterochaos baker map $f_{a,b}$ with $M=2$. For each $\gamma\in D$,
the domain $\Omega_\gamma$  and its image are labeled with $\gamma$ and $\gamma'$ respectively. }\label{hetero3}
\end{center}
\end{figure}

The heterochaos baker maps $f_{\frac{1}{3}}$ and
$f_{\frac{1}{3},\frac{1}{6}}$ with $M=2$ 
were introduced in \cite{STY21}, and the maps of the above general form were introduced in \cite{TY23}.
  In \cite{STY22}, some variants of $f_{a,b}$ were considered in the context of fractal geometry and homoclinic bifurcations of three dimensional diffeomorphisms.  Iterated function systems intimately related to $f_{a,b}$ was considered in \cite{HK22}.
 Earlier than \cite{STY21}, 
 a simple robust model of partially hyperbolic systems called `a porcupine-like horseshoe' in dimension three was introduced and investigated in
  \cite{DG12}, and its dynamical properties were further analyzed in \cite{DGR11,DGR14}. Owing to the piecewise affinity, the heterochaos baker maps are not robust under perturbations of systems.

\subsection{Overview of the paper}
 The rigorous theory of the heterochaos baker maps has begun 
 in \cite{STY21} with the analysis of their periodic points.
  Symbolic dynamics and measures of maximal entropy were investigated in \cite{TY23}. 
 Rates of mixing with respect to the Lebesgue measure were explored in \cite{T23}.
In Section~2 we review these 
results, together with numerical ones, and state main results of this paper that are summarized as follows: 

\begin{itemize}
\item the existence of precisely two ergodic measures of maximal entropy (MMEs) for all parameters (Theorems~\ref{thm-a}), the connection between the two ergodic MMEs and the Lebesgue measure
(Theorem~\ref{thm-a}). 

\item the continuous dependence of the two ergodic MMEs on parameters (Theorem~\ref{thm-c}),

\item the Bernoulliness of the Lebesgue measure (Theorem~ \ref{Bernoulli-thm}),

\end{itemize}

As inferred from their definitions, the heterochaos baker maps have natural symbolic representations.
 In \cite{TY23}, an interesting connection was found between the heterochaos baker maps and the Dyck shift \cite{Kri74}, which implies that {\it one is a mirror image of the other}. This connection underlies proofs of all the main results of this paper. In Section~3 we start with the definition of the Dyck shift, and state
 its connection with the heterochaos baker maps (Theorem~\ref{TY-thm}).
 We then recall basic structures of the Dyck shift
  from \cite{Kri74}, and that of the heterochaos baker maps from \cite{TY23}.
  Using these ingredients, in Section~4 we prove the main results. In Appendices A and B we discuss some variants of the heterochaos baker maps.


\section{The dynamics}
In this section we review existing
results on the dynamics of the heterochaos baker maps, 
along with numerical results, and state the main results of this paper. In addition, we address several conjectures and questions that may illuminate new aspects of heterochaos and inspire future researches.

\subsection{Periodic points of different unstable dimensions}
The heterochaos baker maps are one of the simplest non-hyperbolic systems.
Under the forward iteration of $f=f_{a,b}$,
the $x_u$-direction is expanding by factor $\frac{1}{a}$ or $\frac{1}{1-Ma}$ and the $x_s$-direction is contracting by factor $1-Mb$ or $b$. The $x_c$-direction is a center: contracting on $\bigcup_{k=1}^M\Omega_{\beta_k}$ and expanding on $\bigcup_{k=1}^M\Omega_{\beta_k}$.
The local stability in the $x_c$-direction along each orbit  
is determined by the asymptotic time average of 
the function $\phi^c\colon [0,1]^3\to\mathbb R$ given by
  \begin{equation}\label{geometric-c}\phi^c(x)=\begin{cases}-\log M&\text{ on }  \bigcup_{k=1}^M\Omega_{\alpha_k},\\
 \log M&\text{ on }  \bigcup_{k=1}^M\Omega_{\beta_k}.\end{cases}\end{equation}

Let ${\rm int}(\cdot)$ denote the interior operation in $\mathbb R^3$ or $\mathbb R^2$. For each
$n\in\mathbb N$,
elements of the set 
\[{\rm Per}_n(f)=
\left\{x\in\bigcap_{i=0}^{n-1}f^{-i}\left(\bigcup_{\gamma\in D}{\rm int}(\Omega_\gamma)\right)\colon f^n(x)=x\right\}\] 
 are called {\it periodic points of period $n$}.
  For each $x\in{\rm Per}_n(f)$, the average value $\chi^c(x)=\frac{1}{n}\sum_{i=0}^{n-1}\phi^c(f^i(x))$ equals the exponential growth rate of the second diagonal element of the Jacobian matrix of $f^n$ at $x$.
The unstable dimension of $x$
is either $1$ or $2$ according as  $\chi^c(x)$ is negative or positive. 
We decompose ${\rm Per}_n(f)$ into three subsets:
\[\begin{split}
{\rm Per}_{\alpha,n}(f)&=\{x\in{\rm Per}_n(f)\colon \chi^c(x)<0\};\\
{\rm Per}_{\beta,n}(f)&=\{x\in {\rm Per}_n(f)\colon \chi^c(x)>0\};\\
{\rm Per}_{0,n}(f)&=\{x\in{\rm Per}_n(f)\colon \chi^c(x)=0\}.
\end{split}\] 
Periodic points in 
the first and the second sets are referred to as {\it $1$-unstable} and {\it $2$-unstable} respectively.
The first and the second sets are finite, while the last one may not be finite: it may contain
segments parallel to the $x_c$-axis.
We exclude from further consideration
periodic points that are contained in the last set.
The notion of $1$-unstable, $2$-unstable periodic points carries over to $f_a$ and other piecewise differentiable maps in an obvious way.

The next theorem shows a manifestation of the unstable dimension variability in periodic points, i.e., {\it a heterochaos on topological level}. Although only the special case $(a,b)=(\frac{1}{3},\frac{1}{6})$ with $M=2$ was treated in \cite{STY21}, essentially the same proof works to cover general cases.

\begin{figure}[b]
\includegraphics[width=0.9\textwidth]{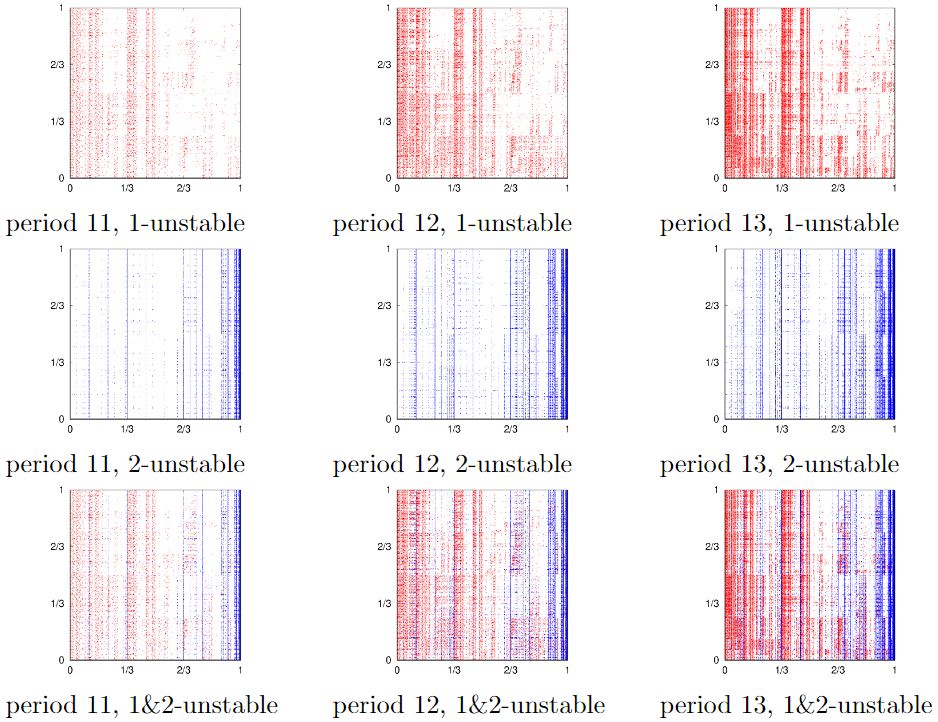}
\caption{Part of periodic points of $f_{\frac{1}{3}}$ with $M=2$.
The first row shows $1$-unstable periodic points, the second row shows $2$-unstable periodic points, and
the third row shows both of the $1$- and $2$-unstable periodic points.
}
\label{fig:periodicorbits}
\end{figure}

\begin{theorem}[\cite{STY21}, Theorem~1.1]\label{thm-1}
For any $(a,b)\in \Delta$ the following statements hold:
\begin{itemize}
\item[(a)] $f_{a,b}$ has a dense orbit in $[0,1]^3$ and $f_a$ has a dense orbit in $[0,1]^2$;
\item[(b)] The set of $1$-unstable periodic points of $f_{a,b}$ is dense in $[0,1]^3$, and
the set of $2$-unstable periodic points of $f_{a,b}$ is dense in $[0,1]^3$;

\item[(c)] The set of $1$-unstable periodic points of $f_{a}$ is dense in $[0,1]^2$, and
the set of $2$-unstable periodic points of $f_a$ is dense in $[0,1]^2$.
\end{itemize}
\end{theorem}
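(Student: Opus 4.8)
The plan is to exploit the piecewise-affine skew-product structure to set up an explicit symbolic coding, and then to realize both dense orbits and periodic points of each prescribed unstable dimension by prescribing suitable itineraries. First I would record how a point is coded by its itinerary in $D$ (two-sided for $f_{a,b}$, one-sided for $f_a$). Since $\tau_a$ is a uniformly expanding, fully branched map, the coordinate $x_u$ is determined by the forward sequence of $\tau_a$-branches, and cylinders shrink geometrically in $x_u$. The center coordinate $x_c$ obeys a base-$M$ \emph{push/pop} rule read straight off the definition: on $\Omega_{\alpha_k}^+$ one has $x_c\mapsto x_c/M+(k-1)/M$, which prepends the digit $k-1$, while on $\Omega_{\beta_k}^+$ one has $x_c\mapsto Mx_c-(k-1)$, which deletes a leading digit and is admissible only when that digit equals $k-1$. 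Thus the base-$M$ expansion of $x_c$ records the stack of not-yet-cancelled $\alpha$-digits together with the digits demanded by later unmatched $\beta$'s, i.e.\ a bracket-matching (Dyck-type) constraint. For $f_{a,b}$ the stable coordinate $x_s$ is coded by the \emph{past} itinerary through a uniformly contracting IFS whose branch images tile $[0,1]$, so $x_s$ is localized by prescribing past symbols with no further constraint. Finally, writing $n_\alpha(w)$ and $n_\beta(w)$ for the numbers of $\alpha$- and $\beta$-symbols in a period-$n$ word $w$, \eqref{geometric-c} gives $\chi^c=\frac{\log M}{n}\bigl(n_\beta(w)-n_\alpha(w)\bigr)$, so the unstable dimension is governed solely by which symbol type is in the majority over one period.

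For statement~(i) I would establish topological transitivity on the symbolic level and transport it back. In the space of admissible itineraries, any admissible word can be continued to any other by first cancelling all pending $\alpha$'s with matching $\beta$'s and then opening whatever is required; so, enumerating a countable basis of cylinders and concatenating representatives with such connecting blocks, I obtain a single admissible forward itinerary that visits every cylinder, whose point therefore has a dense forward orbit. The same construction runs verbatim for $f_a$.

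For statements~(ii) and~(iii) I would construct, near an arbitrary target $z$ and for every $\varepsilon>0$, a periodic point of each prescribed type, by building a repeating block $w$ from three ingredients. A \emph{localizer} matches the $\tau_a$-itinerary of $z_u$ to some depth $q$ (pinning $x_u$ within $\varepsilon$) and chooses the label of each $\beta$-symbol it contains so as to read off the successive base-$M$ digits of $z_c$ (pinning $x_c$ within $M^{-p}$, where $p$ is the number of such $\beta$-steps). A \emph{tuning} block inserts a controlled excess of $\beta$-symbols in the $2$-unstable case, or of $\alpha$-symbols in the $1$-unstable case, so that $n_\beta(w)-n_\alpha(w)$ acquires the desired sign. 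A \emph{closing} block supplies the $\alpha$-pushes that the localizer's $\beta$-steps will cancel one period later, making $\overline{w}$ admissible. Realizing $\overline{w}$ in $x_u$ is automatic; realizing it in $x_c$ reduces to solving for the unique fixed point of the affine first-return map $G_w$ (a contraction when $\alpha$ dominates, an expansion when $\beta$ dominates) and checking that it lies in $[0,1]$ with every intermediate $\beta$-step admissible. This gives~(iii). For~(ii) I would additionally prescribe finitely many \emph{past} symbols to place $x_s$ within $\varepsilon$ of $z_s$; by the unconstrained contracting code this imposes no new admissibility condition, and it remains only to confirm that the resulting bi-infinite periodic itinerary is realizable.

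The main obstacle is exactly the bracket-matching admissibility of the center coordinate. Unlike a full shift one cannot freely prescribe the symbol sequence: every $\beta_k$ must cancel the current leading base-$M$ digit, and a nonzero $\chi^c$ forces a net imbalance between $\alpha$'s and $\beta$'s and hence genuinely unmatched brackets. The delicate point is therefore to choose the tuning and closing blocks so that an imbalance of the correct sign coexists with (a) the prescribed localizing digits, (b) a fixed point of $G_w$ lying in $[0,1]$, and (c) consistency of every $\beta$-step along the whole period; naive choices can push the fixed point outside $[0,1]$ and fail. Making this combinatorial bookkeeping work uniformly in the target and in $(a,b)$, so that both types of periodic points appear arbitrarily close to every point, is where essentially all of the work lies; the expanding estimates for $x_u$ and the contracting estimates for $x_s$ are routine by comparison.
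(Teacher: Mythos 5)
Your strategy is sound, but it is worth noting at the outset that the paper itself does not prove Theorem~\ref{thm-1}; it cites \cite{STY21}, whose argument (for $M=2$, $a=\frac13$, $b=\frac16$) is a hands-on geometric construction, with the remark that the same proof covers all parameters. What you have done instead is essentially rediscover the Dyck-shift coding of the center coordinate that the survey develops in \S3 following \cite{TY23} (Theorem~\ref{TY-thm}): your push/pop description of $x_c$ in base $M$, with $\beta_k$ admissible only against a matching top digit, is exactly the bracket-matching structure of $\Sigma_D$. So your route is genuinely different from the cited proof, and it is a viable one --- but as written it has two real gaps.

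First, your final paragraph explicitly defers the central verification (``where essentially all of the work lies''), so this is a plan rather than a proof. In fact the failure mode you fear --- the fixed point of $G_w$ escaping $[0,1]$ --- dissolves under the right word design, and saying so is the missing step. For a $1$-unstable point, build the period as a ballot-type word in which every $\beta$ is matched by an earlier $\alpha$ of the same period (put the push block first and keep the prefix counts of pushes at least the pops); then no pop ever reads a digit of the initial $x_c$, the center branch of $f^n$ is defined on all of $[0,1]$, and $G_w$ is an affine contraction of $[0,1]$ into itself, so its fixed point lies inside automatically. Dually, for a $2$-unstable point make every $\alpha$ matched by a later $\beta$, so the center branch maps a base-$M$ digit cylinder (cut out by the freely chosen indices of the net pops) \emph{onto} $[0,1]$; an affine expansion of a cylinder onto an interval containing it has its fixed point in the cylinder. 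With this normalization the bookkeeping you worried about is elementary, and the visit time is placed just after the push block (for $f_a$), where the stack pins $x_c$ to $p$ digits and the localizer pins $x_u$.

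Second, there is a concrete error in your three-dimensional step: the claim that prescribing past symbols to localize $x_s$ ``imposes no new admissibility condition'' is false. The $x_s$-images of the branches tile $[0,1]$ with all $\alpha_k$ sharing the single interval $[0,1-Mb]$ but each $\beta_k$ owning its own subinterval of $[1-Mb,1]$; hence localizing $x_s$ near a target $z_s$ dictates the \emph{types and the specific indices} of the $\beta$'s immediately preceding the visit time, and when the center stack is nonempty a $\beta_k$ is admissible only if $k-1$ equals the top digit --- a genuine clash. Worse, your push-block method of pinning $x_c$ places a run of $p$ symbols of $\alpha$-type right before the visit time, which forces $x_s$ into $[0,(1-Mb)^p]$, incompatible with an arbitrary $z_s$. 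The repair is to pin $x_c$ at the visit time by \emph{future} free net pops (interleaved after the $x_u$-localizer, whose own pushes are first popped with forced indices) and to arrange the stack to be empty throughout the $x_s$-localizing block, so that all of its $\beta$'s are unconstrained reads. The same point infects your transitivity paragraph: visiting every \emph{symbolic} cylinder does not by itself give a dense orbit in $[0,1]^3$, since a plain cylinder need not be geometrically small in $x_c$ or $x_s$; the visiting blocks must contain these localizing devices so that the corresponding geometric cells form a basis. These interactions are exactly the deferred bookkeeping, and one of your explicit claims about it is wrong, so the proposal as it stands does not yet constitute a proof.
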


 Item (a) asserts the topological transitivity of the maps.
 \textsc{Figure}~\ref{fig:periodicorbits} shows partial plots of 1-unstable, 2-unstable periodic points of $f_{\frac{1}{3}}$ with $M=2$ of period $11$, $12$, $13$.
 Items (b) and (c) indicate that
 the white spots are actually filled with periodic points of higher periods.

The coexistence phenomenon of periodic points 
with different unstable dimensions in the same transitive set as in Theorem~\ref{thm-1} 
should be widespread among systems that are not uniformly hyperbolic.
Indeed, this phenomenon (and even its robustness under perturbations of the systems) for diffeomorphisms was already known since the 60s 
\cite{AS70,bonatti_1996,mane_1978,shub_1971,Si72}. However, these results do not immediately apply to specific systems of interest. Consequently, we only have a handful of examples for which the coexistence phenomenon has actually been verified.
For example,
Pikovsky and Grassberger
\cite{pikovsky_1991} introduced  {\it a coupled tent map}
\[\begin{split}x_{n+1}&=(1-\omega)h_a(x_n)+\omega h_a(y_n)\\
y_{n+1}&=\omega h_a(x_n)+(1-\omega)h_a(y_n),\end{split}\]
where $\omega\in(0,\frac{1}{2})$ is a coupling parameter and $h_a\colon\mathbb R\to\mathbb R$ is the skew tent map
\[h_a(z)=\begin{cases}
az&\text{ if }z\leq a^{-1}\\
-\displaystyle{\frac{a}{a-1}(1-z)}&\text{ if }z\geq a^{-1},\ a>1.\end{cases}\]
If $a>1$ and $a\neq2$, then as $\omega$ decreases,
the family undergoes {\it the blowout bifurcation}  at $\omega=\omega_b(a)$ 
\cite{Gle99,HM97}.
Glendinning \cite[Theorem~A]{glendinning_2001} proved that
for any $a\in(\frac{1}{2}(1+\sqrt{5}),2)$ and any $\omega\in(\omega_b(a),\frac{1}{2a})$,
the set of $2$-unstable periodic points of the corresponding map is dense in its `topological attractor', a closed invariant set that strictly contains its `metric attractor', the diagonal $\{(x,x)\in[0,1]^2\colon x\in[0,1]\}$.
For these parameters, the coexistence of $1$-unstable periodic points and $2$-unstable periodic points in the metric attractor is known \cite{Gle99,glendinning_2001,HM97}.
For these parameters, the density of $1$-unstable periodic points in the topological attractor remains unknown. 
\begin{conjecture}For the coupled tent maps for which $a$ and $\omega$ satisfy the conditions in \cite[Theorem~A]{glendinning_2001}, the set of $1$-unstable periodic points is dense in its topological attractor.  \end{conjecture}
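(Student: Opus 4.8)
The plan is to pass to the coordinates $u=x+y$, $v=x-y$, in which the diagonal is $\{v=0\}$, the induced dynamics there is exactly the skew tent map $f_a$, and a direct computation gives $v_{n+1}=(1-2\omega)\bigl(f_a(x_n)-f_a(y_n)\bigr)$. Thus the transverse Lyapunov exponent of an orbit equals $\log(1-2\omega)+\lambda$, where $\lambda$ is its exponent under $f_a$, and since $f_a$ has slopes of modulus $a$ on its shallow branch and $\tfrac{a}{a-1}$ on its steep branch, the transverse multiplier over one step is $(1-2\omega)a$ or $(1-2\omega)\tfrac{a}{a-1}$ according to the branch. The parameter window is designed exactly so that these two regimes disagree: $\omega<\tfrac{1}{2a}$ is equivalent to $(1-2\omega)\tfrac{a}{a-1}>1$, so the steep branch is transversally expanding (this produces Glendinning's dense $2$-unstable orbits), while $\omega>\omega_b(a)$ forces $(1-2\omega)a<1$ — because the blowout exponent averages $\log|f_a'|$ against the SRB measure, which charges both branches and hence lies strictly above its minimum $\log a$ — so the shallow branch is transversally contracting. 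An orbit is therefore $1$-unstable precisely when it spends enough of its time on the shallow branch that $\log(1-2\omega)+\lambda<0$.

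On the diagonal itself the conjecture is elementary. The fixed point $0$ of $f_a$ has exponent $\lambda=\log a$, the minimal possible value, so its transverse exponent is $\log\bigl((1-2\omega)a\bigr)<0$; lifting to the coupled map those periodic orbits of $f_a$ that spend most of their time near $0$ (hence with $\lambda$ close to $\log a$) yields genuine $1$-unstable periodic points, and these are dense in the metric attractor. The whole difficulty is to fill the part of the topological attractor lying \emph{off} the diagonal.

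For that I would code the coupled map by the joint branch itinerary of the pair $(x_n,y_n)$ and exploit Glendinning's description of the topological attractor as the closure of the transversally unstable iterates of the diagonal \cite{glendinning_2001}. Given a target $p$ in the topological attractor and $\varepsilon>0$, density of $2$-unstable periodic points supplies an orbit passing $\varepsilon$-close to $p$; I would extract a finite itinerary block $w$ realizing this approach, concatenate it with a long block $s$ that drives the orbit back towards the diagonal and onto the shallow branch, and try to realize $(ws)^{\infty}$ as an actual periodic point by a closing/shadowing argument within the attractor. For $s$ long enough the Birkhoff average of $\log(1-2\omega)+\log|f_a'|$ along $(ws)^{\infty}$ is dominated by the shallow-branch sojourn, hence negative, so the resulting orbit is $1$-unstable while still passing $\varepsilon$-close to $p$.

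The hard part, and the reason the statement remains a conjecture, is exactly this realization step. The coupled tent map is non-invertible, only piecewise affine, and on the topological attractor it is transversally expanding over the steep branch but contracting over the shallow branch, with no uniform hyperbolicity and no margin to spare near $\omega_b(a)$, so neither standard shadowing nor specification applies off the diagonal. The genuine tension is structural: reaching a point far from the diagonal requires a long transversally \emph{expanding} (steep-branch) excursion, whereas net $1$-unstability requires the transversally \emph{contracting} (shallow-branch) sojourn to win on average, and both must be achieved inside a single closed orbit that never leaves the attractor. Quantifying how far off the diagonal a prescribed itinerary is forced to travel, and matching that distance to $p$ while retaining a net transverse contraction, is the crux; I expect it to require a fine analysis of the transverse cross-sections of the attractor — its ``thickness'' as a function of the branch itinerary — in the spirit of the fractal-geometry description available for the related models of \cite{STY22}.
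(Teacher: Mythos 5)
You have not proved the statement, and neither does the paper: this item is stated as a \emph{conjecture}, and the authors say explicitly that for the parameters of Glendinning's Theorem~A ``the density of $1$-unstable periodic points in the topological attractor remains unknown.'' So there is no paper proof to compare against, and your proposal must be judged on its own. Its preparatory computations are sound: in the coordinates $u=x+y$, $v=x-y$ one indeed gets $v_{n+1}=(1-2\omega)\bigl(f_a(x_n)-f_a(y_n)\bigr)$; the inequality $\omega<\tfrac{1}{2a}$ is equivalent to $(1-2\omega)\tfrac{a}{a-1}>1$; and since the blowout condition forces $1-2\omega_b(a)=e^{-\lambda_{SRB}}$ with $\lambda_{SRB}>\log a$ (the invariant measure of $f_a$ is Lebesgue and charges both branches), one has $(1-2\omega)a<1$ throughout the window. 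Your diagonal argument — periodic orbits of $f_a$ spending most of their time near $0$ have $\lambda$ close to $\log a$, hence negative transverse exponent, yet their short excursions can pass near any target — correctly recovers the known density of $1$-unstable periodic points in the \emph{metric} attractor. But the statement concerns the topological attractor, and there your argument stops exactly where you say it does: the ``closing/shadowing'' realization of the concatenated itinerary $(ws)^{\infty}$ as an actual periodic orbit inside the attractor is asserted, not carried out, and you concede that no standard shadowing or specification mechanism applies. That realization step \emph{is} the conjecture; what precedes it is a reduction, so the proposal is an honest research program rather than a proof.

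Beyond the acknowledged gap, one technical point in your bookkeeping deserves flagging, because it would bite even if a closing lemma were available. The per-step transverse multiplier $(1-2\omega)\,|f_a'|$ is valid only when $x_n$ and $y_n$ lie on the \emph{same} branch; a genuinely off-diagonal orbit has steps with $x_n$, $y_n$ on different branches, where the Jacobian
\[
\begin{pmatrix} (1-\omega)\,f_a'(x_n) & \omega\, f_a'(y_n)\\[2pt] \omega\, f_a'(x_n) & (1-\omega)\,f_a'(y_n)\end{pmatrix}
\]
does not preserve the splitting into diagonal and anti-diagonal directions. Consequently the Lyapunov exponents of your candidate orbit $(ws)^{\infty}$ are those of a non-commuting $2\times 2$ matrix cocycle, and the Birkhoff average of $\log(1-2\omega)+\log|f_a'|$ does not by itself certify $1$-unstability: the mixed-branch steps occurring during the excursion encoded by $w$ could in principle rotate the contracting direction and spoil the sign of the smaller exponent. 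Any eventual proof will need to control both the realization of the itinerary \emph{and} the cone/exponent behavior of this cocycle across mixed-branch blocks, which is consistent with your closing remark that a finer geometric analysis of the attractor's transverse structure is required.
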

The definition of topological attractor and that of metric attractor can be found in \cite{Mil85}.

\subsection{Measures of maximal entropy}\label{MME-sec}
Let $T$ be a 
Borel
map of a compact metric space $X$.
 Let $M(X)$ denote the space of Borel probability measures on $X$ endowed with the weak* topology, $M(X,T)$  the set of elements of $M(X)$ that are $T$-invariant, and
 let $M^{\rm e}(X,T)$ denote the set of elements of $M(X,T)$ that are ergodic.
For  $\mu\in M(X,T)$ let $h(\mu)=h(T,\mu)$ denote the measure-theoretic  
entropy of $\mu$ with respect to $T$. 
If $\sup\{h(\mu)\colon\mu\in M(X,T)\}$
is positive and finite, a measure that attains this supremum is called {\it a measure of maximal entropy} ({\it MME} for short). 
Let $M_{\rm max}^{\rm e}(X,T)$ denote the set of ergodic MMEs.
Central questions surrounding MMEs  
are their existence, finiteness and uniqueness. 
MMEs do not always exist 
\cite{Buz14,Mis73}.
A system 
that has the unique MME is called {\it intrinsically ergodic} \cite{W70}.
In the thermodynamic formalism \cite{Rue04},
the coexistence of MMEs is referred to as {\it phase transition}.


Each probability vector $(p_1,\ldots,p_k)$
$(k\geq2)$ uniquely determines
a shift invariant, ergodic
 Borel probability measure on the (one- or two-sided) full shift space on $k$-symbols. This measure is called a Bernoulli measure.
By the $(p_1,\ldots,p_k)$-Bernoulli system $(\sigma,\nu)$
we mean that $\sigma$ is the full shift on $k$-symbols,
and $\nu$ is the Bernoulli measure on the shift space associated with the probability vector $(p_1,\ldots,p_k)$.
We say a $T$-invariant probability measure $\nu$ (or $(T,\nu)$) is $(p_1,\ldots,p_k)$-{\it Bernoulli} if $(T,\nu)$ is isomorphic to the $(p_1,\ldots,p_k)$-Bernoulli system.

Returning to the heterochaos baker map, let $p\colon[0,1]^3\to[0,1]^2$ denote the projection to the $(x_u,x_c)$-plane.
We denote the Lebesgue measure on $[0,1]^2$ and that on $[0,1]^3$ commonly by ${\rm Leb}$ despite the possible ambiguity. 
The next theorem shows
 a manifestation of the unstable dimension of variability in phase transition, i.e., {\it a heterochaos on ergodic level}. 
\begin{theorem}
\label{thm-a} 
For any $(a,b)\in \Delta$ the following statements hold:

\begin{itemize}
\item[(a)] There exist two $f_{a,b}$-invariant
 ergodic Borel probability measures $\mu_{\alpha}=\mu_{\alpha,a,b}$ and $\mu_{\beta}=\mu_{\beta,a,b}$ 
 of entropy $\log (M+1)$ that are $(\frac{1}{M+1},\ldots,\frac{1}{M+1})$-Bernoulli,  
charge any non-empty open subset of $[0,1]^3$ 
and satisfy
\[\mu_{\alpha,a,b}(\Omega_{\alpha_k})=
\mu_{\beta,a,b}(\Omega_{\beta_k})=\frac{1}{M+1}\ \text{ for  }k=1,\ldots,M;\]
\item[(b)] 
$M_{\rm max}^{\rm e}([0,1]^3,f_{a,b})=\{ \mu_{\alpha,a,b},\mu_{\beta,a,b}\}$;
\item[(c)] 
  $M^{\rm e}_{\rm max}([0,1]^2,f_a)=\{\mu_{\alpha,a,b}\circ p^{-1},\mu_{\beta,a,b }\circ p^{-1}\}$.
  \end{itemize}
  Moreover the following statement holds:
  \begin{itemize}
  \item[(d)]  $\mu_{\alpha,a,b}={\rm Leb}$ if and only if $(a,b)=(\frac{1}{M+1},\frac{1}{M(M+1)})$. Similarly,
 $\mu_{\beta,a,b}={\rm Leb}$ holds if and only if $(a,b)=(\frac{1}{M(M+1)},\frac{1}{M+1})$.
\end{itemize}\end{theorem}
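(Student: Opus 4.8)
The plan is to reduce all three statements to the symbolic picture supplied by Theorem~\ref{TY-thm}, namely the identification of $f_{a,b}$ with the mirror image of the Dyck shift $D_M$, together with Krieger's classification of the measures of maximal entropy of $D_M$ \cite{Kri74}. Concretely, I would fix the coding map $\pi\colon X\to[0,1]^3$, where $X$ is the two-sided subshift on the alphabet $D$ attached to the partition $\{\Omega_\gamma\}_{\gamma\in D}$; this $\pi$ intertwines the shift with $f_{a,b}$ and is a bijection off the $f_{a,b}$-orbit $N$ of the boundary set $\bigcup_{\gamma\in D}\partial\Omega_\gamma$, a countable union of coordinate hyperplanes. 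From \cite{Kri74} (recalled in \S3) one has $h_{\rm top}(D_M)=\log(M+1)$ and exactly two ergodic MMEs $\nu_{\alpha},\nu_{\beta}$; each is fully supported and $(\frac{1}{M+1},\ldots,\frac{1}{M+1})$-Bernoulli, with $\nu_{\alpha}([\alpha_k])=\frac{1}{M+1}$ and $\nu_{\beta}([\beta_k])=\frac{1}{M+1}$ for every $k$. Since the mirror involution is merely time reversal, a conjugacy onto the reversed shift, it carries MMEs to MMEs and preserves both entropy and the Bernoulli property, so the same picture holds verbatim for $X$.

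For (i) I would set $\mu_{\alpha}=\nu_{\alpha}\circ\pi^{-1}$ and $\mu_{\beta}=\nu_{\beta}\circ\pi^{-1}$. The only failure of $\pi$ to be a genuine measure isomorphism is the orbit $N$, so the key lemma is that $\nu_{\alpha}(\pi^{-1}(N))=\nu_{\beta}(\pi^{-1}(N))=0$; this I would obtain by checking that $\pi^{-1}(N)$ lies in a countable union of cylinders corresponding to sequences that are eventually constant in the relevant coordinate, each of which the non-atomic, fully supported Bernoulli measures $\nu_{\alpha},\nu_{\beta}$ annihilate. Granting this, $\pi$ is a measurable isomorphism on full-measure sets, so $\mu_{\alpha},\mu_{\beta}$ are $f_{a,b}$-invariant, ergodic, $(\frac{1}{M+1},\ldots,\frac{1}{M+1})$-Bernoulli, and of entropy $\log(M+1)$. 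Full support of $\nu_{\alpha}$ together with density of $\pi(X)$ forces $\mu_{\alpha}$ to charge every nonempty open set, and the cylinder weights read off directly as $\mu_{\alpha}(\Omega_{\alpha_k})=\nu_{\alpha}([\alpha_k])=\frac{1}{M+1}$, and symmetrically for $\mu_{\beta}$.

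For (ii) the inclusion $\{\mu_{\alpha},\mu_{\beta}\}\subseteq M_{\rm max}^{\rm e}([0,1]^3,f_{a,b})$ follows once $h_{\rm top}(f_{a,b})=\log(M+1)$: the lower bound is witnessed by $\mu_{\alpha}$, and the upper bound by the factor-map structure of $\pi$, which makes $f_{a,b}$ a factor of the shift on $X$. The substantive direction is the reverse inclusion: given an ergodic MME $\mu$, I would show $\mu(N)=0$, lift $\mu$ through $\pi$ to an ergodic shift-invariant $\hat\mu$ on $X$ with $h(\hat\mu)=h(\mu)=\log(M+1)$, and conclude from Krieger that $\hat\mu\in\{\nu_{\alpha},\nu_{\beta}\}$, whence $\mu\in\{\mu_{\alpha},\mu_{\beta}\}$. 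The crux, and the step I expect to be the main obstacle, is exactly the assertion $\mu(N)=0$ for every MME $\mu$: because $N$ is a countable union of proper affine pieces on which the dynamics carries entropy strictly below $\log(M+1)$, no maximal measure can concentrate there; but converting this topological/entropy smallness of $N$ into the measure-theoretic statement, valid for an arbitrary MME and needed to guarantee that the lift $\hat\mu$ exists with full entropy, is the delicate point where faithfulness of the coding must be used with care.

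Finally, for (iii) I would use that $p$ is a factor map, $p\circ f_{a,b}=f_a\circ p$, whose fibers are the $x_s$-segments on which $f_{a,b}$ contracts uniformly (by $1-Mb$ or $b$). As the fiber direction carries no entropy, $p$ is entropy preserving, so $h_{\rm top}(f_a)=\log(M+1)$ and $\mu_{\alpha}\circ p^{-1},\mu_{\beta}\circ p^{-1}$ are ergodic MMEs of $f_a$; they are distinct because their masses on $\Omega_{\alpha_k}^{+}$ differ, namely $\mu_{\alpha}\circ p^{-1}(\Omega_{\alpha_k}^{+})=\frac{1}{M+1}$ while $\mu_{\beta}\circ p^{-1}(\Omega_{\alpha_k}^{+})$ is strictly smaller. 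Conversely, the uniform contraction in $x_s$ expresses the attractor as a graph over the base and yields a measurable conjugacy between $(f_a,\lambda)$ and its unique $f_{a,b}$-invariant lift, giving an entropy-preserving bijection between the invariant measures of $f_a$ and of $f_{a,b}$. By (ii) this forces $M^{\rm e}_{\rm max}([0,1]^2,f_a)=\{\mu_{\alpha}\circ p^{-1},\mu_{\beta}\circ p^{-1}\}$.
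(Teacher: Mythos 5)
Your outline for item (i) and item (iii) matches the paper's route (Krieger's two Dyck MMEs pushed through the coding, and the entropy-preserving correspondence $p_*$ between invariant measures of $f_{a,b}$ and $f_a$, cf.\ Lemma~\ref{meas-proj}). But there is a genuine gap in (ii), and it stems from a false premise: you assert that the coding map is a bijection off the $f_{a,b}$-orbit $N$ of the boundary set $\bigcup_{\gamma\in D}\partial\Omega_\gamma$. That is not true for these maps. The coding fails to be injective on a large set of \emph{interior} points: whenever the bracket-counting functions $H_i(\omega)$ do not tend to $-\infty$ in at least one time direction (i.e.\ $\omega\notin A_{\alpha,\beta}$), the fiber $\pi^{-1}(\omega)$ is a whole segment parallel to the $x_c$-axis (see Lemma~\ref{factor-prop} and the Remark following it). These fibers correspond exactly to itineraries of zero central drift, so the measures you must worry about are the ergodic measures in $M_0^{\rm e}(\Lambda,f)$ with $\chi^c(\mu)=0$, not boundary-orbit measures. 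For such $\mu$ the identity $h(\mu)=h(\mu\circ\pi^{-1})$ is precisely what is \emph{not} automatic, since the semiconjugacy has uncountable non-trivial fibers there; your plan to ``lift $\mu$ through $\pi$ with equal entropy and apply Krieger'' silently assumes the conclusion at this point. (Your proposed handling of $N$ itself is also backwards: one cannot first show $\mu(N)=0$ for an arbitrary MME; the paper instead extends the coding injectively to the supports of measures charging the bad set and shows directly that any such ergodic measure has entropy $<\log(M+1)$, Lemma~\ref{meas-entropy}(ii).)

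The paper closes the zero-exponent gap by a quantitative argument that has no counterpart in your proposal: Ruelle's inequality $h(\mu)\le\chi^u(\mu)+\max\{\chi^c(\mu),0\}$ (Lemma~\ref{ruelle-ineq}) combined with the bound $\chi^u(\mu)\le-\log\sqrt{a(1-Ma)}$ valid when $\chi^c(\mu)=0$ (Lemma~\ref{zero-u}, a consequence of the fact that such measures spend asymptotically half their time in $\Omega_\alpha$). This yields $h(\mu)\le\log\sqrt{4M}<\log(M+1)$ \emph{only} at the symmetric parameter $a=\frac{1}{2M}$; to cover all $(a,b)\in\Delta$ the paper then transports the bound using the involution $\iota(x_u,x_c,x_s)=(1-x_s,1-x_c,1-x_u)$, which exchanges $f_{a,b}$ with $f_{b,a}$ and swaps the sign classes of $\chi^c$ while preserving entropy (Lemma~\ref{inverse}), together with the fact from Lemma~\ref{meas-proj} that the projected family $\{\mu\circ p^{-1}\colon\mu\in M_0^{\rm e}\}$ and the entropies do not depend on the second parameter. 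Without this chain (or some substitute, e.g.\ a rigorous Ledrappier--Walters-type argument showing the segment fibers carry no entropy, which would itself require care since $f$ is discontinuous and $\Lambda$ is not compact), your reverse inclusion $M^{\rm e}_{\rm max}\subseteq\{\mu_\alpha,\mu_\beta\}$ does not go through; this missing step is exactly the novelty of Theorem~\ref{thm-a}(ii) over the earlier parameter-restricted result of \cite{TY23}.
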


Item (a) for any $(a,b)\in \Delta$
was proved in \cite[Theorem~1.2]{TY23}. There it was also proved that 
$M_{\rm max}^{\rm e}([0,1]^3,f_{a,b})=\{ \mu_{\alpha},\mu_{\beta}\}$ provided  $a$ or $b$ belongs to the interval $(\frac{1}{M(M+1)},\frac{1}{M+1})$. In particular, this restriction 
excludes 
the `classical' parameter $(a,b)=(\frac{1}{3},\frac{1}{6})$ 
with $M=2$ investigated in \cite{STY21}. The novelty of Theorem~\ref{thm-a} is that the existence of precisely two ergodic MMEs of $f_{a,b}$ and $f_a$ as in (b) and (c) holds for {\it any} $(a,b)\in \Delta$ and {\it any} $a\in(0,\frac{1}{M})$. 
Item (d) implies that $\mu_{\alpha}$ or $\mu_{\beta}$ is a
  physical measure of $f_{a,b}$ if and only if 
  the parameter $(a,b)$ is these two particular values.
For the definition of a physical measure, see the paragraph after Conjecture~\ref{con-Ber}.

To put Theorem~\ref{thm-a} into context, let us recall that
classical examples of 
intrinsically ergodic systems are transitive topological Markov shifts \cite{P64} and
 uniformly hyperbolic diffeomorphisms \cite{Bow75,Rue04,Sin72}. 
 A great deal of effort has been dedicated to verifying the intrinsic ergodicity for systems that are not uniformly hyperbolic, and
satisfactory
pictures are emerging when the dimension of the ambient manifold is one and two. 
Among others, 
most comprehensive results are due to 
Hofbauer \cite{Hof79} who established the intrinsic ergodicity for a large class of piecewise monotonic interval maps, and 
  Buzzi et al. \cite{BCS22} who proved that any $C^\infty$ surface diffeomorphism has only finitely many ergodic MMEs, and the uniqueness holds when some transitivity condition holds.
 
It is completely relevant to investigate concrete examples of transitive systems that are not intrinsically ergodic:
to ask how many ergodic MMEs coexist and what are their geometric and statistical properties, and so on.
Most of known such examples 
are subshifts over finite alphabets
 (see e.g., \cite{GK18,Hay13,Kri74,KOR16,Pav16,Tho06}) or partially hyperbolic systems
(see e.g., \cite{BFT23,NMRV,RT22,RRTU12}). For any of the subshift examples,
we must delve into the structure of the shift space to investigate properties of the MMEs.
In \cite{BFT23,RRTU12}, coexisting MMEs do not appear explicitly but appear in abstract dichotomy theorems. In \cite{RT22}, two ergodic MMEs on $\mathbb T^4$ were constructed but the construction involves many geometric ingredients. In  \cite{NMRV} it was shown that a transitive endomorphism on $\mathbb S^1\times[0,1]$ constructed by Kan \cite{Kan94}
has three ergodic MMEs.

We would like to have easily accessible, simpler examples that 
display phase transitions. They should be piecewise affine, like the heterochaos baker maps. To exclude trivial examples we impose the transitivity.


\begin{question}\label{q-MME}Does there exist a transitive, piecewise affine map that has three 
ergodic MMEs?\end{question}


\subsection{Distributions of periodic points}
For a large class of dynamical systems that are intrinsically ergodic, periodic points equidistribute with respect to the unique MME. Not much is known on distributions of periodic points for systems that are not intrinsically ergodic.

Our numerical experiments  suggest that the distribution of $1$-unstable periodic points differs from that of $2$-unstable periodic points qualitatively, as shown in \textsc{Figure}~\ref{fig:periodicorbits}.
We conjecture that they equidistribute with respect to the two ergodic MMEs in Theorem~\ref{thm-a}.

\begin{conjecture}\label{dist-con}
For any $(a,b)\in \Delta$ and any continuous function $\phi\colon[0,1]^3\to\mathbb R$ we have  \[\lim_{n\to\infty}\frac{1}{\#{\rm Per}_{\alpha,n}(f)}\sum_{x\in{\rm Per}_{\alpha,n}(f)}\phi(x)=\int\phi {\rm d}\mu_{\alpha}\]  and 
\[ \lim_{n\to\infty}\frac{1}{\#{\rm Per}_{\beta,n}(f)}\sum_{x\in{\rm Per}_{\beta,n}(f)}\phi(x)=\int\phi {\rm d}\mu_{\beta}.\]  \end{conjecture}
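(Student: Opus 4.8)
\section{A proof proposal for Conjecture~\ref{dist-con}}

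The plan is to transport the problem to the Dyck shift through the coding of Theorem~\ref{TY-thm} and to prove the equidistribution there. Under this coding a periodic point $x\in{\rm Fix}(f^n)$ corresponds to a period-$n$ point of the Dyck shift whose period word $w$ carries $n_\alpha$ symbols from $\{\alpha_1,\dots,\alpha_M\}$ (opening brackets) and $n_\beta=n-n_\alpha$ symbols from $\{\beta_1,\dots,\beta_M\}$ (closing brackets). Since $\chi^c(x)=\frac{n_\beta-n_\alpha}{n}\log M$, the three sets split according to the sign of the bracket drift $n_\alpha-n_\beta$: ${\rm Fix}_\alpha(f^n)$ is the $\alpha$-heavy part $n_\alpha>n_\beta$, ${\rm Fix}_\beta(f^n)$ the $\beta$-heavy part, and ${\rm Fix}_0(f^n)$ the balanced part. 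Because every continuous $\phi$ is uniformly approximable by functions depending on finitely many symbols, it suffices to prove, for each cylinder $[w]$,
\[
\frac{\#\{x\in{\rm Fix}_\alpha(f^n):x\in[w]\}}{\#{\rm Fix}_\alpha(f^n)}\xrightarrow[n\to\infty]{}\mu_\alpha([w]),
\]
together with the mirror statement for $\beta$.

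The heart of the matter is a counting estimate for Dyck-admissible period words graded by the drift. Grouping the $\alpha$-heavy words by the value of $n_\alpha$, the number with prescribed $n_\alpha$ is, up to a subexponential factor, $\binom{n}{n_\alpha}M^{n_\alpha}$: the positions of the openings must form a positive lattice excursion (a ballot-type constraint costing only a subexponential factor when $n_\alpha-n_\beta$ is of order $n$), which determines the matching by the last-in--first-out discipline, each opening then chooses one of $M$ types freely, and each closing has its type forced by the opening it matches. Maximizing $H(n_\alpha/n)+\tfrac{n_\alpha}{n}\log M$ (with $H$ the binary entropy) gives the unique interior maximizer $n_\alpha/n=\frac{M}{M+1}$ with maximal value $\log(M+1)$, so $\#{\rm Fix}_\alpha(f^n)$ grows at the full rate $h_{\mathrm{top}}=\log(M+1)$ and concentrates on orbits with $n_\alpha/n\to\frac{M}{M+1}$ and $n_\beta/n\to\frac{1}{M+1}$. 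These are exactly the frequencies prescribed by $\mu_\alpha$ in Theorem~\ref{thm-a}(i); indeed the per-symbol distribution realizing the maximum assigns mass $\frac1{M+1}$ to each opening type and $\frac1{M+1}$ to ``close'', i.e.\ it is the $(\frac1{M+1},\dots,\frac1{M+1})$-Bernoulli measure $\mu_\alpha$. Rerunning the count with the first $|w|$ symbols pinned to $w$ multiplies the leading asymptotics by $\mu_\alpha([w])$, which produces the displayed ratio; the $\beta$-case is symmetric.

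Turning this heuristic into a proof is where the difficulty lies, and it is the reason the statement is only a conjecture. The Dyck shift is not sofic, so the Parry--Bowen transfer-operator machinery that yields equidistribution for subshifts of finite type is unavailable; the matching rule creates long-range correlations (a closing bracket must be paired with an opening bracket arbitrarily far away), and admissibility of a bi-infinite \emph{periodic} sequence is a cyclic condition rather than a one-sided prefix condition. I would therefore replace the crude exponential count by sharp asymptotics extracted from the explicit zeta function of the Dyck shift \cite{Kri74} together with a local central limit theorem for the stack-height walk, aiming at a uniform estimate of the form $\#\{x\in{\rm Fix}_\alpha(f^n):x\in[w]\}=\mu_\alpha([w])\,(M+1)^n\,n^{-c}(C+o(1))$ with $c>0$, valid uniformly over a growing family of cylinders, from which the ratios follow. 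Two points demand particular care: showing that the near-balanced orbits $|n_\alpha-n_\beta|=o(n)$ (and the excluded set ${\rm Fix}_0$) contribute only subexponentially, so they pollute neither limit, and controlling the prefactor $n^{-c}$ uniformly enough to interchange the limit with the cylinder approximation of $\phi$.

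The main obstacle, then, is the absence of a finite-type (or even sofic) model: one must construct a bespoke combinatorial apparatus---most plausibly a countable-state Markov extension that records the stack height, to which thermodynamic-formalism and local-limit techniques can be applied---rather than invoking an off-the-shelf equidistribution theorem. Once such precise drift-graded counts are in hand, the identification of the two limiting measures with $\mu_\alpha$ and $\mu_\beta$ is forced by the variational computation above, since $\mu_\alpha$ and $\mu_\beta$ are exactly the two Gibbs states realizing the maximum at opposite signs of the drift.
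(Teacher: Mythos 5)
You should first be aware that the paper contains no proof of this statement: it is stated as Conjecture~\ref{dist-con} and left open (the authors even caution that the first row of \textsc{Figure}~\ref{fig:periodicorbits} does not obviously support it), so there is nothing in the paper to compare your argument against, and your own text concedes it is a research program rather than a proof. Judged on its own terms, the skeleton is sound: under Theorem~\ref{TY-thm} one has $\chi^c(x)=\frac{n_\beta-n_\alpha}{n}\log M$, so ${\rm Fix}_\alpha(f^n)$ does correspond to $\alpha$-heavy periodic Dyck words, and such a word $\omega$ satisfies $H_i(\omega)\to\infty$ as $i\to\infty$ and $H_i(\omega)\to-\infty$ as $i\to-\infty$, hence lies in $A_\alpha$, so Lemma~\ref{factor-prop} makes its fiber a singleton. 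But your counting ansatz is off in one direction and incomplete in another. For a bi-infinite \emph{periodic} sequence with strictly positive drift there is in fact no ballot constraint at all: every closing bracket is automatically matched, because the height process tends to $-\infty$ to the left; so the number of period-$n$ sequences with prescribed $n_\alpha>n_\beta$ is exactly $\binom{n}{n_\alpha}M^{n_\alpha}$, whence $\#{\rm Fix}_\alpha(f^n)=(M+1)^n(1-o(1))$ up to boundary corrections, with \emph{no} polynomial factor; your posited prefactor $n^{-c}$ with $c>0$ could only arise for the balanced part ${\rm Fix}_0$, which is excluded anyway.

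The genuine gaps are two. First, ``pinning $w$ multiplies the count by $\mu_\alpha([w])$'' is not a symbol-by-symbol computation: $\nu_\alpha=\lambda_\alpha\circ\psi_\alpha^{-1}$ is not a product measure on $\Sigma_D$, and $\nu_\alpha$ of a Dyck cylinder depends on the matching structure of $w$ --- a closing bracket matched inside $w$ has its type forced (no factor), while one matched outside $w$ contributes a factor $\frac{1}{M}$. Reproducing exactly these weights from drift-graded counts of periodic words is precisely the local-limit/exchangeability statement you defer, and it is the heart of the problem. Second, the reduction of a continuous $\phi$ to finite-window symbolic functions is not legitimate as stated: $\pi^{-1}$ is not continuous, and the coordinates $\omega_{-N},\ldots,\omega_N$ localize the $x_c$-coordinate of $\pi^{-1}(\omega)$ only when the matching horizon at that window is at most $N$. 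You would need a bound, uniform in $n$, showing that all but an $\varepsilon$-fraction of $\alpha$-heavy period-$n$ words have horizon $\leq N$ at a fixed position --- the periodic-orbit analogue of the sets $G_{\gamma,m_1,N}$ used in \S\ref{pfthmc-sec} to prove Theorem~\ref{thm-c}, which is the natural template to imitate. Two smaller points: the correspondence between ${\rm Fix}(f^n)$ and periodic Dyck words needs boundary/differentiability corrections, since $\pi$ is defined only on $\Lambda$ and ${\rm Fix}(f^n)$ requires differentiability of $f^n$ at $x$; and the zeta function of the Dyck shift is not in \cite{Kri74}. None of this refutes your program, which looks like a plausible route, but the conjecture remains open after your text, not resolved by it.
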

For the classical parameter $(a,b)=(\frac{1}{3},\frac{1}{6})$ with $M=2$, the MME $\mu_\alpha$ coincides with the Lebesgue measure on $[0,1]^3$ (see Theorem~\ref{thm-a}(d)). If Conjecture~\ref{dist-con} is true, then $1$-unstable periodic points must equidistribute with respect to the Lebesgue measure
as their periods tend to infinity. The first row of
\textsc{Figure}~\ref{fig:periodicorbits} does not look supporting  Conjecture~\ref{dist-con}. Needless to say, numerical computations of periodic points of higher periods are more difficult.


\subsection{Parameter dependence of the ergodic MMEs}\label{para-sec}
As the parameter $(a,b)$ varies, the two ergodic MMEs in Theorem~\ref{thm-a} vary.
Only for the two special parameter values, one of them coincides with {\rm Leb} (see Theorem~\ref{thm-a}(d)).
A natural question to ask is how the two ergodic MMEs depend on the parameter. As a second main result of this paper we have obtained the continuous dependence. 
\begin{theorem}
\label{thm-c}
\ 
\begin{itemize}
\item[(a)] The maps
$(a,b)\in \Delta\mapsto\mu_{\alpha,a,b}\in M([0,1]^3)$ and $(a,b)\in \Delta\mapsto\mu_{\beta,a,b}\in M([0,1]^3)$ are continuous.

\item[(b)] For any $b\in(0,\frac{1}{M})$, 
the maps
$a\in (0,\frac{1}{M})\mapsto\mu_{\alpha,a,b}\circ p^{-1}\in M([0,1]^2)$ and $a\in (0,\frac{1}{M})\mapsto\mu_{\beta,a,b}\circ p^{-1}\in M([0,1]^2)$  are continuous.
\end{itemize}
\end{theorem}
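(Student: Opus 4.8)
The plan is to reduce the statement to the explicit, parameter-continuous symbolic coding underlying the Dyck-shift connection of Theorem~\ref{TY-thm}. Recall from \S3 that $f_{a,b}$ is coded by a map $\pi_{a,b}\colon\Sigma\to[0,1]^3$ from the two-sided Dyck shift $\Sigma$ on the $2M$ symbols of $D$, intertwining the shift with $f_{a,b}$, and that under this coding the two ergodic MMEs furnished by Theorem~\ref{thm-a} are the push-forwards $\mu_{\alpha,a,b}=(\pi_{a,b})_*\nu_\alpha$ and $\mu_{\beta,a,b}=(\pi_{a,b})_*\nu_\beta$ of the two ergodic MMEs $\nu_\alpha,\nu_\beta$ of the Dyck shift. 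The decisive structural point is that $\nu_\alpha$ and $\nu_\beta$ live on $\Sigma$ and depend only on $M$, not on $(a,b)$. Thus all the parameter dependence is concentrated in the single map $\pi_{a,b}$, and the problem becomes one of showing that $\pi_{a,b}$ varies continuously with $(a,b)$.

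Second, I would make the coding explicit coordinate by coordinate. Writing $\pi_{a,b}(\omega)=(x_u,x_c,x_s)$ for $\omega=(\omega_i)_{i\in\mathbb Z}\in\Sigma$, the expanding coordinate $x_u$ is recovered from the forward symbols $\omega_0,\omega_1,\ldots$ as a nested intersection of intervals whose lengths are products of the branch widths $a$ (for the symbols $\alpha_k$) and $1-Ma$ (for the symbols $\beta_k$); the contracting coordinate $x_s$ is recovered from the backward symbols $\omega_{-1},\omega_{-2},\ldots$ as a similar nested intersection with contraction ratios $1-Mb$ (on $\Omega_\alpha$) and $b$ (on $\Omega_\beta$); and the center coordinate $x_c$ is an $M$-adic quantity determined by the bracket-matching (Dyck) structure of $\omega$, involving the integer $M$ alone. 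In particular $x_u$ depends only on $a$, $x_s$ only on $b$, and $x_c$ on neither.

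Third, I would conclude directly. Fix $(a_0,b_0)\in\Delta$ and a compact neighborhood $K\subset\Delta$; on $K$ the ratios $a,\,1-Ma,\,1-Mb,\,b$ all lie in a fixed interval $[\theta_0,\theta_1]\subset(0,1)$, so the series defining $x_u$ and $x_s$, together with their $a$- and $b$-derivatives, are dominated by a fixed summable geometric sequence uniformly in $\omega$. Hence $a\mapsto x_u$ and $b\mapsto x_s$ are Lipschitz uniformly in $\omega$, and since $x_c$ does not move, $\pi_{a,b}\to\pi_{a_0,b_0}$ uniformly on $\Sigma$ as $(a,b)\to(a_0,b_0)$. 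For $\phi\in C([0,1]^3)$, uniform continuity of $\phi$ then gives $\phi\circ\pi_{a,b}\to\phi\circ\pi_{a_0,b_0}$ uniformly, whence
\[\int\phi\,{\rm d}\mu_{\alpha,a,b}=\int\phi\circ\pi_{a,b}\,{\rm d}\nu_\alpha\longrightarrow\int\phi\circ\pi_{a_0,b_0}\,{\rm d}\nu_\alpha=\int\phi\,{\rm d}\mu_{\alpha,a_0,b_0},\]
since $\nu_\alpha$ is a fixed probability measure; the same computation with $\nu_\beta$ proves (i). For (ii) I would note that $p\circ\pi_{a,b}$ retains only $(x_u,x_c)$, which depend on $a$ but not on $b$, so that $\mu_{\alpha,a,b}\circ p^{-1}=(p\circ\pi_{a,b})_*\nu_\alpha$ is independent of $b$, and the same estimate in the single variable $a$ yields its continuity, and likewise for $\mu_{\beta,a,b}\circ p^{-1}$.

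The main obstacle is not this final limit, which is soft once the coding is in hand, but securing the decomposition it rests on: the identity $\mu_{\alpha,a,b}=(\pi_{a,b})_*\nu_\alpha$ with a single fixed $\nu_\alpha$, and the explicit coordinatewise form of $\pi_{a,b}$. This is exactly where the mirror-image structure of Theorem~\ref{TY-thm} does the work, transferring the two ergodic MMEs of the Dyck shift---whose existence and explicit Bernoulli structure are independent of $(a,b)$---onto $f_{a,b}$ as in Theorem~\ref{thm-a}. The one genuinely delicate ingredient is the center coordinate: unlike $x_u$ and $x_s$, which come from one-sided expansions, $x_c$ is governed by the Dyck bracket-matching, and one must check that it is well defined $\nu_\bullet$-almost everywhere and, crucially, untouched by $(a,b)$. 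Once this is granted, the parameter continuity of $\pi_{a,b}$ collapses to the elementary uniform convergence of the two geometric series defining $x_u$ and $x_s$, and the theorem follows.
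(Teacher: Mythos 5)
Your proposal is correct in substance, and it shares the paper's overall reduction: both concentrate all parameter dependence in the inverse coding map, using that $\nu_\alpha$, $\nu_\beta$ are fixed symbolic measures on $\Sigma_D$ and that $\mu_{\gamma,a,b}=\nu_\gamma\circ\pi_{a,b}$ as in \eqref{mualpha}, so continuity follows once one shows $\pi_{a,b}^{-1}(\omega)$ moves little as $(a,b)$ varies. But your method for that key estimate is genuinely different. The paper argues measure-theoretically: it picks good sets $G_{\gamma,m_1,N}$ with $\nu_\gamma(G_{\gamma,m_1,N})>1-\varepsilon$, on which the finite-time cuboids $P_{a,b}(\omega)=\bigcap_{i=-N}^{N-1}f_{a,b}^{-i}(\Omega_{\omega_i})$ are small in all three directions, uses continuity in $(a,b)$ of the finitely many boundary images $\bigcup_{i=-N}^{N-1}f_{a,b}^i\bigl(\bigcup_{\gamma\in D}\partial\Omega_\gamma\bigr)$ to force $P_{a_*,b_*}(\omega)\cap P_{a,b}(\omega)\neq\emptyset$, and then splits the integral over $G_{\gamma,m_1,N}$ and its complement. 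You instead exploit the coordinatewise structure of the inverse coding: $x_u$ is a geometric series in the inverse branches of $\tau_a$ (ratios $a$ and $1-Ma$), $x_s$ the analogous series in $b$, and --- the decisive observation, which is indeed verifiable from the definitions, since both the central-coordinate dynamics ($x_c\mapsto\frac{x_c}{M}+\frac{k-1}{M}$ or $Mx_c-k+1$) and the $x_c$-constraints defining $\Omega_{\beta_k}$ involve only $M$ --- $x_c(\omega)$ is independent of $(a,b)$. This yields a uniform-in-$\omega$ local Lipschitz bound on ${\rm dist}(\pi_{a,b}^{-1}(\omega),\pi_{a_*,b_*}^{-1}(\omega))$, strictly stronger than what the paper extracts; it dispenses with the good-set machinery, gives Lipschitz (not merely continuous) dependence of $\int\varphi\,{\rm d}\mu_{\gamma,a,b}$ for Lipschitz $\varphi$, and yields the $b$-independence of $\mu_{\gamma,a,b}\circ p^{-1}$ in (ii) directly, where the paper instead routes (ii) through Lemma~\ref{meas-proj}. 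Two precisions you should add to make the sketch airtight: the uniform estimate holds not on all of $\Sigma_D$ but on the parameter-independent set where Lemma~\ref{factor-prop} makes $\pi_{a,b}^{-1}(\omega)$ a singleton (e.g.\ $A_\gamma\subset A_{\alpha,\beta}$), which suffices since $\nu_\gamma(A_\gamma)=1$ by \eqref{measure-nu}; and the identity $\int\varphi\,{\rm d}\mu_{\gamma,a,b}=\int\varphi\circ\pi_{a,b}^{-1}\,{\rm d}\nu_\gamma$ also needs $\nu_\gamma(\pi(\Lambda))=1$, which the paper imports from \cite{TY23-c}. With those two points cited, your argument is complete and somewhat sharper than the published one.
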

The continuity in the statement of the theorem is with respect to the weak* topology on the space of Borel probability measures: for example, the continuity of the map $(a,b)\in\Delta\mapsto\int\varphi{\rm d}\mu_{\alpha,a,b}$
for any continuous function $\varphi\colon[0,1]^3\to\mathbb R$.

In view of Theorem~\ref{thm-c}, it is also natural to ask how smooth the dependence of the ergodic MMEs on the parameter is.
This question is within the scope of {\it linear response} in dynamical systems, which was explored in a few pioneering works \cite{KKPH89,Rue97,Rue98} for uniformly hyperbolic systems equipped with SRB measures.  
For the heterochaos baker maps and their ergodic MMEs, we conjecture the following.

\begin{conjecture}
For a nice class of continuous functions
$\varphi\colon[0,1]^3\to\mathbb R$, the maps
$(a,b)\in\Delta\mapsto\int\varphi{\rm d}\mu_{\alpha,a,b}$ and $(a,b)\in\Delta\mapsto\int\varphi{\rm d}\mu_{\beta,a,b}$
are differentiable. \end{conjecture}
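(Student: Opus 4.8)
The plan is to exploit the \emph{combinatorial rigidity} of the two ergodic MMEs: although the geometric realization of $f_{a,b}$ changes with $(a,b)$, its symbolic model does not. Using the conjugacy with the Dyck shift (Theorem~\ref{TY-thm}) and the coding constructed in \cite{TY23}, I would fix once and for all a symbolic space $\Sigma\subset D^{\mathbb Z}$, the shift $\sigma$, and two $\sigma$-invariant, $(a,b)$-\emph{independent} probability measures $\nu_\alpha,\nu_\beta$ on $\Sigma$ (the images of the Krieger MMEs of the Dyck shift \cite{Kri74}), together with coding maps $\pi_{a,b}\colon\Sigma\to[0,1]^3$ for which the two ergodic MMEs of Theorem~\ref{thm-a} satisfy
\[\mu_{\gamma,a,b}=(\pi_{a,b})_*\nu_\gamma\qquad(\gamma\in\{\alpha,\beta\}).\]
Writing $I_\gamma(a,b)=\int\varphi\,{\rm d}\mu_{\gamma,a,b}$, this yields the pull-back representation
\[I_\gamma(a,b)=\int_\Sigma\varphi\bigl(\pi_{a,b}(\omega)\bigr)\,{\rm d}\nu_\gamma(\omega),\]
in which the domain of integration and the measure are frozen and \emph{only the integrand depends on the parameter}. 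This is precisely the feature that is absent for SRB measures of piecewise-affine maps, where the coding itself moves with the parameter and linear response typically fails; its presence here is the conceptual reason to expect the conjecture to hold.

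The second step is to make the parameter dependence of $\pi_{a,b}$ explicit and uniformly controlled. Unfolding the defining recursion of $f_{a,b}$, the expanding coordinate is reconstructed from the forward symbols alone: with $r_j(a)=a$ if $\omega_j\in\{\alpha_1,\dots,\alpha_M\}$ and $r_j(a)=1-Ma$ if $\omega_j\in\{\beta_1,\dots,\beta_M\}$, and $c_j(a)$ the corresponding affine offset,
\[\bigl(\pi_{a,b}(\omega)\bigr)_u=\sum_{n=0}^{\infty}c_n(a)\prod_{j=0}^{n-1}r_j(a).\]
Symmetrically, $\bigl(\pi_{a,b}(\omega)\bigr)_s$ is an analogous series in the \emph{backward} symbols with factors $1-Mb$ and $b$, so it depends only on $b$. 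The center coordinate $\bigl(\pi_{a,b}(\omega)\bigr)_c$ is governed purely by the Dyck bracket matching of $\alpha_k$ against $\beta_k$ with expansion/contraction factor $M$; as this involves neither $a$ nor $b$, it is an $(a,b)$-\emph{independent} (if only measurable) function of $\omega$, to be frozen throughout. On any compact $K\subset\Delta$ one has $\max\{a,1-Ma\}\le\rho_K<1$, so the $x_u$-series and all its $a$-derivatives converge geometrically, uniformly in $\omega\in\Sigma$ and locally uniformly in $(a,b)$; hence $a\mapsto\bigl(\pi_{a,b}(\omega)\bigr)_u$ is real-analytic for every fixed $\omega$, with derivatives bounded by constants depending only on $K$, and likewise for $b\mapsto\bigl(\pi_{a,b}(\omega)\bigr)_s$.

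With these ingredients the differentiation is routine. Taking the natural \emph{nice class} to be $C^1([0,1]^3)$ (and $C^\infty$ for higher regularity), I would differentiate under the integral sign via the mean value theorem and dominated convergence, the dominating functions being \emph{constants} by the uniform geometric bounds above. This gives the linear-response formulas
\[\partial_a I_\gamma=\int_\Sigma \partial_{x_u}\varphi\bigl(\pi_{a,b}(\omega)\bigr)\,\partial_a\bigl(\pi_{a,b}(\omega)\bigr)_u\,{\rm d}\nu_\gamma(\omega),\]
\[\partial_b I_\gamma=\int_\Sigma \partial_{x_s}\varphi\bigl(\pi_{a,b}(\omega)\bigr)\,\partial_b\bigl(\pi_{a,b}(\omega)\bigr)_s\,{\rm d}\nu_\gamma(\omega).\]
Since both partials are themselves continuous in $(a,b)$ (again by dominated convergence), $I_\gamma$ is jointly $C^1$ on $\Delta$, and iteration gives $C^\infty$ when $\varphi\in C^\infty$. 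The statement for $f_a$ follows by the same argument with the $x_s$-coordinate and the variable $b$ suppressed, using $\mu_{\gamma,a,b}\circ p^{-1}=(p\circ\pi_{a,b})_*\nu_\gamma$.

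The hard part is not this differentiation but the foundational step: rigorously producing a \emph{single} measure $\nu_\gamma$ on a \emph{single} space $\Sigma$ together with coding maps $\pi_{a,b}$ realizing $\mu_{\gamma,a,b}=(\pi_{a,b})_*\nu_\gamma$ \emph{for all parameters at once}, with the $(a,b)$-dependence confined to the explicit $x_u$- and $x_s$-series. Two points require care. First, $\pi_{a,b}$ is only defined off the $\nu_\gamma$-null set of sequences whose brackets fail to match or that land on domain boundaries; one must check that this exceptional set and the coding can be chosen coherently, so that the a.e.-defined derivatives $\partial_a(\pi_{a,b})_u$ and $\partial_b(\pi_{a,b})_s$ exist on a full-measure set valid for an entire neighborhood of parameters. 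Second, one must verify that the non-hyperbolic center reconstruction is genuinely parameter-free, so that it may be frozen during differentiation; this is exactly where the Dyck structure of \cite{Kri74,TY23} does the work. Once this representation is secured the conjecture follows, and the same mechanism strongly suggests the stronger assertion that $(a,b)\mapsto\int\varphi\,{\rm d}\mu_{\gamma,a,b}$ is real-analytic for $\varphi\in C^\omega$, with the optimal regularity of the \emph{nice class} (whether Lipschitz or bounded-variation observables suffice) being the natural refinement to pursue.
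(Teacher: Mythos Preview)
The statement you are addressing is a \emph{Conjecture} in the paper; the authors pose it as an open problem immediately after Theorem~\ref{thm-c} and offer no proof. There is therefore nothing in the paper to compare your argument against.

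On its merits, your outline is sound and would in fact establish the conjecture for $\varphi\in C^1([0,1]^3)$ (and $C^k$, $C^\omega$ with the obvious upgrades). The structural facts you rely on are all correct and verifiable directly from \S\ref{hetero-def} and \S\ref{str-mme}: the measures $\nu_\alpha,\nu_\beta$ on $\Sigma_D$ are parameter-free, and the representation $\int\varphi\,{\rm d}\mu_{\gamma,a,b}=\int_{\Sigma_D}\varphi\circ\pi_{a,b}^{-1}\,{\rm d}\nu_\gamma$ is exactly what the paper already exploits to prove continuity in \S\ref{pfthmc-sec}. Crucially, the $x_c$-action of $f_{a,b}$ (see the formula for $f_a$) involves neither $a$ nor $b$, so the center coordinate of $\pi_{a,b}^{-1}(\omega)$ is genuinely parameter-independent, as you claim. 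The $x_u$- and $x_s$-coordinates are series whose factors lie in $\{a,1-Ma\}$ and $\{b,1-Mb\}$ respectively, each affine in the parameter; since $\max\{a,1-Ma\}<1$ locally uniformly on $(0,\tfrac{1}{M})$, termwise differentiation to any order is justified with $\omega$-uniform bounds. Finally, the two issues you flag at the end are not obstacles: the exceptional set may be taken to be $\Sigma_D\setminus A_\gamma$, which is $\nu_\gamma$-null by \eqref{measure-nu} and manifestly parameter-independent, and on $A_\gamma$ Lemma~\ref{factor-prop} supplies a single-valued $\pi_{a,b}^{-1}$ for every $(a,b)\in\Delta$ simultaneously.

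One notational remark: in the paper $\pi_{a,b}$ maps $\Lambda_{a,b}\subset[0,1]^3$ to $\Sigma_D$, so what you denote $\pi_{a,b}$ is the paper's $\pi_{a,b}^{-1}$.
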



 



\subsection{Ergodicity, mixingness of the Lebesgue measure}\label{erg-sec}
The heterochaos baker maps are variants of the well-known baker map given by
\[(x,y)\in[0,1)^2\mapsto\begin{cases}\vspace{1mm}
    \displaystyle{\left(2x,\frac{y}{2}\right)}&\text{ on }\displaystyle{\left[0,\frac{1}{2}\right)\times[0,1)},\\
   \displaystyle{\left (2x-1,\frac{y+1}{2}\right)}&\text{ on }\displaystyle{\left[\frac{1}{2},1\right)\times[0,1)}.
   \end{cases}\]
See \textsc{Figure}~\ref{bakermap}.
The name `baker' is used since the action of the map is reminiscent of the kneading dough \cite{H56}.
The baker map preserves the Lebesgue measure on $[0,1)^2$, which is Bernoulli, and 
has exponential decay of correlations  
for H\"older continuous functions with respect to the Lebesgue measure \cite{Bow75,Rue04}.
We would like to recover these nice statistical properties for the heterochaos baker maps.

Before proceeding further let us recall a few basic notions in ergodic theory. Let $T\colon X\to X$ be a measurable dynamical system preserving a probability measure $\nu$. Recall that $(T,\nu)$ is {\it ergodic} if for any pair $A$, $B$ of measurable sets in $X$ we have
\[\lim_{n\to\infty}\frac{1}{n}\sum_{i=0}^{n-1}\nu(A\cap T^{-i}(B))=\nu(A)\nu(B).\] We say $(T,\nu)$ is {\it weak mixing} if for any pair $A$, $B$ of measurable sets in $X$ we have
\[\lim_{n\to\infty}\frac{1}{n}\sum_{i=0}^{n-1}|\nu(A\cap T^{-i}(B))-\nu(A)\nu(B)|=0.\]
It is well-known \cite{Wal82} that $(T,\nu)$ is weak mixing if and only if the product system $(T\times T,\nu\times\nu)$ is ergodic.
Let $k\geq2$ be an integer.
We say $(T,\nu)$ is {\it $k$-mixing} if 
for all measurable sets $A_0,A_1\ldots,A_{k-1}$ in $X$ we have 
\[\lim_{n_1,\ldots,n_{k-1}\to\infty}\nu(A_0\cap T^{-n_1}(A_1)\cap\cdots\cap T^{-n_1-n_2-\cdots-n_{k-1}}(A_{k-1}))=\prod_{j=0}^{k-1}\nu(A_j).\]
$2$-mixing is usually called {\it mixing} or {\it strong mixing}. 
Clearly $(k+1)$-mixing implies $k$-mixing, but the converse is unknown.
We say $(T,\nu)$ is {\it mixing of all orders} if it is $k$-mixing for any $k\geq2$. We summarize the well-known implications  \cite{R63,Wal82}: Bernoulli $\Longrightarrow$ $K$ (or exact) $\Longrightarrow$ mixing of all orders $\Longrightarrow$ mixing $\Longrightarrow$ weak mixing $\Longrightarrow$ ergodic.
The definition of $K$-property and that of exactness can be found in \cite{Wal82}.

\begin{figure}
\begin{center}
\includegraphics[height=4cm,width=9cm]{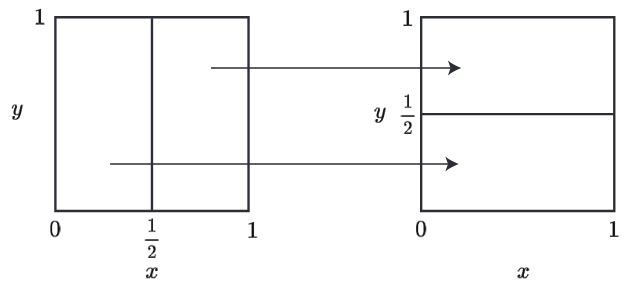}
\caption{The baker map: The $x$-direction is expanding by factor $2$ and the $y$-direction is contracting by factor $\frac{1}{2}$. }\label{bakermap}
\end{center}
\end{figure}

The Lebesgue measure is especially important in the viewpoint that observable events 
correspond to sets of positive Lebesgue measure. One can check that ${\rm Leb}$ is $f_a$-invariant for any $a\in(0,\frac{1}{M})$, and
${\rm Leb}$ is $f_{a,b}$-invariant if and only if $a+b=\frac{1}{M}$ (see Lemma~\ref{preserve}).
 We put
\begin{equation}\label{ga}g_a=f_{a,\frac{1}{M}-a},\end{equation}
and investigate statistical properties of 
$f_a$ and $g_a$ with respect to ${\rm Leb}$.  
 Since the $x_s$-direction is uniformly contracting under the forward iteration of $g_a$,
statistical properties of $(g_a,{\rm Leb})$ are almost identical to that of $(f_a,{\rm Leb})$.
For this reason, we will
write $F=F_a$ for both $f_a$ and $g_a$, and write $[0,1]^d$ for both $[0,1]^2$ and $[0,1]^3$ respectively.  This means that statements about $F_a\colon[0,1]^d\to[0,1]^d$ pertain to both $f_a\colon[0,1]^2\to[0,1]^2$ and $g_a\colon[0,1]^3\to[0,1]^3$.

The ergodicity of the fully branched interval map $\tau_a\colon[0,1]\to[0,1]$ with respect to the Lebesgue measure on $[0,1]$ implies that
  \begin{equation}\label{chia}\lim_{n\to\infty}\frac{1}{n}\sum_{i=0}^{n-1}\phi^c(g_a^i(x))=(1-2Ma)\log M\ \text{ for Lebesgue a.e. $x\in[0,1]^3$.}\end{equation}
We classify $F_a$ into three types
according to the sign of this limit value:
\begin{itemize}
\item $a\in(0,\frac{1}{2M})$ (mostly expanding center);

 \item $a\in(\frac{1}{2M},\frac{1}{M})$
 (mostly contracting center);
 \item $a=\frac{1}{2M}$ (mostly neutral center).
 \end{itemize}
The limit value in \eqref{chia} will be denoted by $\chi^c(g_a,{\rm Leb})$ and called {\it the central Lyapunov exponent} (see Section~\ref{lyapunov-sec} for the definition of Lyapunov exponents). 
The analysis of statistical properties of the maps with mostly neutral center is more difficult
than that of other maps.

The next theorem summarizes results on mixing properties of $(F_a,{\rm Leb})$.
Item (b) is stronger than (a).
 Item (c) strengthens (b) with the exception of the mostly neutral parameter.
\begin{theorem}\label{erg-thm}   \

\begin{itemize}
\item[(a)] {\rm (\cite{STY21}, Theorem~1.2)}. For any $a\in (0,\frac{1}{M})\setminus\{\frac{1}{2M}\}$, $(F_a,{\rm Leb})$ is weak mixing. 
\item[(b)]
{\rm (\cite{T23}, Theorem~A).}
For any $a\in(0,\frac{1}{M})$, $(F_a,{\rm Leb})$ is mixing of all orders.

\item[(c)]\label{Bernoulli-thm} For any $a\in (0,\frac{1}{2M})$, $(F_a,{\rm Leb})$ 
is
$(\frac{1-Ma}{M},\ldots,\frac{1-Ma}{M},Ma)$-Bernoulli. For any $a\in (\frac{1}{2M},\frac{1}{M})$, $(F_a,{\rm Leb})$ is 
$(a,\ldots,a,1-Ma)$-Bernoulli. 
\end{itemize}
\end{theorem}



Let ${\rm dist}(\cdot,\cdot)$ denote the Euclidean distance on $[0,1]^2$ or $[0,1]^3$.
We say a pair $(x,y)$ of distinct points in $[0,1]^d$ is {\it a Li-Yorke pair} of $F$ if
\[\liminf_{n\to\infty} {\rm dist}(F^n(x),F^n(y))=0
\ \text{ and }\ \limsup_{n\to\infty} {\rm dist}(F^n(x),F^n(y))=\sqrt{d}.\]
\begin{cor}For any $a\in(0,\frac{1}{M})$, Lebesgue almost every pair of points in $[0,1]^d$ is a Li-Yorke pair of $F_a$.\end{cor}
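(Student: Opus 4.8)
The plan is to pass to the self-product system $(F_a\times F_a,{\rm Leb}\times{\rm Leb})$ on $[0,1]^d\times[0,1]^d$ and to read off the two Li--Yorke conditions from the fact that Lebesgue-almost every product orbit is dense. First I would record that $(F_a,{\rm Leb})$ is weak mixing for every $a\in(0,\frac{1}{M})$: this is Theorem~\ref{erg-thm} for $a\neq\frac{1}{2M}$, while Theorem~\ref{mix-thm} gives mixing of all orders, hence weak mixing, for all parameters (in particular for the remaining value $a=\frac{1}{2M}$). By the classical characterization of weak mixing (see e.g.\ \cite{Wal82}), weak mixing of $(F_a,{\rm Leb})$ is equivalent to ergodicity of the self-product $(F_a\times F_a,{\rm Leb}\times{\rm Leb})$, so the product system is ergodic.

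The next step is to upgrade ergodicity of the product to density of almost every product orbit. Since ${\rm Leb}\times{\rm Leb}$ has full topological support on $[0,1]^d\times[0,1]^d$, every member of a fixed countable base $\{U_k\}$ of open sets has positive measure. Applying Birkhoff's theorem to the indicator of each $U_k$ shows that almost every point visits $U_k$ with positive frequency, hence infinitely often; intersecting these full-measure sets over the countable base, I conclude that for ${\rm Leb}\times{\rm Leb}$-almost every $(x,y)$ the forward orbit $\{(F_a^n(x),F_a^n(y)):n\geq0\}$ is dense in $[0,1]^d\times[0,1]^d$.

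Finally I would deduce the Li--Yorke conditions from density, using that $\sqrt{d}$ is exactly the diameter of $[0,1]^d$, attained only by antipodal corner pairs. Density forces the orbit to accumulate on the diagonal point $(\mathbf{0},\mathbf{0})$, so along a subsequence ${\rm dist}(F_a^n(x),F_a^n(y))\to0$, giving $\liminf_n{\rm dist}(F_a^n(x),F_a^n(y))=0$; density likewise forces accumulation on the antipodal point $(\mathbf{0},\mathbf{1})$, so along another subsequence the distance tends to $\sqrt{d}$, and since no pair in $[0,1]^d$ is farther apart than $\sqrt{d}$ we get $\limsup_n{\rm dist}(F_a^n(x),F_a^n(y))=\sqrt{d}$. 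Both conditions hold for almost every pair, which is the assertion.

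The only point requiring genuine care is the bookkeeping around discontinuities: $F_a$ is defined on all of $[0,1]^d$ but is discontinuous on the measure-zero union of the domain boundaries, so I would first discard the ${\rm Leb}\times{\rm Leb}$-null set of pairs $(x,y)$ whose orbit ever meets this boundary set (a countable union of null sets). This guarantees both that the iterates $F_a^n(x)$, $F_a^n(y)$ are unambiguously defined for all $n$ and that the accumulation arguments above go through, since those use only continuity of the Euclidean distance function and not continuity of $F_a$. I do not expect this to be a serious obstacle; the entire substance of the corollary is carried by the chain weak mixing $\Rightarrow$ product ergodic $\Rightarrow$ almost every product orbit dense.
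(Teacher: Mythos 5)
Your proposal is correct and follows essentially the same route as the paper: weak mixing of $(F_a,{\rm Leb})$ (from Theorems~\ref{erg-thm} and \ref{mix-thm}) gives ergodicity of the product system, and Birkhoff's ergodic theorem applied to $(F_a\times F_a,{\rm Leb}\times{\rm Leb})$ yields the Li--Yorke conditions for almost every pair. The paper states this in two lines; your elaboration --- density of almost every product orbit via a countable base, accumulation at a diagonal point and at an antipodal corner pair, and the harmless discarding of orbits meeting the discontinuity set --- is exactly the bookkeeping the paper leaves implicit.
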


 \begin{proof}
 Since $(F,{\rm Leb})$ is weak mixing by Theorem~\ref{erg-thm}(b), the desired conclusion follows from Birkhoff's ergodic theorem applied to $(F\times F,{\rm Leb}\times{\rm Leb})$.
   \end{proof}

The proof of Theorem~\ref{erg-thm}(a) in \cite{STY21} amounts to showing the ergodicity of
    the associated product system using a variant of
   {\it Hopf's argument}
     that is used for proving the ergodicity of volume-preserving Anosov diffeomorphisms. 
 Although only the special case $(a,b)=(\frac{1}{3},\frac{1}{6})$ with $M=2$ was treated in \cite{STY21}, essentially the same proof works to cover general cases
 except for the mostly neutral parameter $\frac{1}{2M}$.
In fact, the exactness of
$(f_a,{\rm Leb})$ and the $K$-property of an invertible restriction of $(g_a,{\rm Leb})$ to a set of measure $1$ were shown
in \cite{T23}.

Let us comment on 
one interesting consequence of Theorem~\ref{erg-thm}(b). 
It implies that
$(F_a,{\rm Leb})$ is ergodic for all $a\in(0,\frac{1}{M})$.
In particular, {\it ${\rm Leb}$
continues to be the unique physical measure of $F_a$ for all $a\in(0,\frac{1}{M})$,
 despite the change of sign of its central Lyapunov exponent at $a=\frac{1}{2M}$.}
A Borel probability measure $\mu$ invariant under a Borel map $T$ of 
 a compact Riemannian manifold $X$ is said to be {\it physical} if 
 there exists a subset $E$ of $X$ of positive Lebesgue measure such that for any $x\in E$ we have
\[\lim_{n\to\infty}\frac{1}{n}\sum_{i=0}^{n-1}\phi(T^i(x))=\int\phi{\rm d}\mu\ \text{ for any continuous $\phi\colon X\to\mathbb R$}.\]

In one-dimensional dynamics, 
the change of sign of Lyapunov exponents of ergodic physical measures often leads to non-smooth changes of physical measures.
For the quadratic map $x\in[0,1]\mapsto ax(1-x)\in[0,1]$ $(0\leq a\leq 4)$, the physical measure with positive Lyapunov exponent is often supported on the union of intervals, while 
the physical measure with negative Lyapunov exponent is supported on a single periodic orbit
(see e.g., \cite{dMevSt93}). 
Tsujii and Zhang \cite{TZ} asked if 
ergodic physical measures of partially hyperbolic systems 
can `vary smoothly' with parameter, changing the sign of their 
central Lyapunov exponents.
In \cite{TZ} they 
constructed an open set of parametrized families of local diffeomorphisms of $\mathbb T^2$ that {\it robustly} display this phenomenon, thereby answering their question affirmatively.  
Albeit non-robust,
the family $(F_a)_a$ displays this sort of phenomenon.

 It is reasonable to expect that the Bernoulliness of the Lebesgue measure in Theorem~\ref{erg-thm}(c) extends to the mostly neutral parameter in a continuous fashion.
 \begin{conjecture}\label{con-Ber}
$(F_{\frac{1}{2M}},{\rm Leb})$ is $(\frac{1}{2M},\ldots,\frac{1}{2M},\frac{1}{2})$-Bernoulli.
 \end{conjecture}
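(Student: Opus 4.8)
The plan is to deduce the conjecture from Ornstein's isomorphism theory, taking as input the fact already recorded in the excerpt that $(g_a,{\rm Leb})$ is a $K$-automorphism and $(f_a,{\rm Leb})$ is exact for \emph{every} $a\in(0,\frac1M)$, in particular at $a=\frac{1}{2M}$ (Theorem~\ref{mix-thm} holds for all $a$). Since the $x_s$-direction is uniformly contracting, $(g_{1/2M},{\rm Leb})$ realizes the natural extension of the non-invertible exact system $(f_{1/2M},{\rm Leb})$, the contracting $x_s$-coordinate encoding the past; hence it suffices to prove that the two-sided system $(g_{1/2M},{\rm Leb})$ is Bernoulli, and the one-sided statement for $f_{1/2M}$ follows by passing to the natural base. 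By the Ornstein isomorphism theorem, two Bernoulli systems of equal entropy are isomorphic, so it is enough to establish: (a) that $(g_{1/2M},{\rm Leb})$ is Bernoulli, and (b) that $h(g_{1/2M},{\rm Leb})$ equals $\log2+\frac12\log M$, the entropy of the $(\frac{1}{2M},\ldots,\frac{1}{2M},\frac{1}{2})$-Bernoulli shift.

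Step (b) is a direct computation. At $a=\frac1{2M}$ one has $Ma=\frac12$, so the $x_u$-direction expands by $2M$ on $\Omega_\alpha$ and by $2$ on $\Omega_\beta$, each region carrying ${\rm Leb}$-mass $\frac12$; thus the unstable exponent is $\lambda^u=\frac12\log(2M)+\frac12\log2=\log2+\frac12\log M$. The central exponent is $\chi^c(g_{1/2M},{\rm Leb})=(1-2Ma)\log M=0$ and the $x_s$-exponent is negative, so by Pesin's formula for the absolutely continuous invariant measure ${\rm Leb}$ the entropy equals the sum of the positive exponents, namely $\lambda^u=\log2+\frac12\log M$, matching the target. (Equivalently, for the non-invertible factor $f_{1/2M}$ one has $h=\int\log|\det Df_{1/2M}|\,{\rm d}{\rm Leb}=\lambda^u+\chi^c=\lambda^u$ precisely because $\chi^c=0$.)

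Step (a) is the substance, and the neutral center is the main obstacle. For $a>\frac1{2M}$, Theorem~\ref{Bernoulli-thm}(ii) exhibits $(g_a,{\rm Leb})$ as a hyperbolic baker system: the center is net-contracting, the coordinates $(x_c,x_s)$ are measurable functions of the backward $\tau_a$-itinerary, and $(g_a,{\rm Leb})$ is the natural extension of the piecewise-linear expanding factor $(\tau_a,{\rm Leb})$, whose branch-length vector $(a,\ldots,a,1-Ma)$ is exactly the asserted Bernoulli vector. Letting $a\downarrow\frac1{2M}$ this vector degenerates to $(\frac1{2M},\ldots,\frac1{2M},\frac12)$, the entropy continues to equal $h(\tau_a,{\rm Leb})$, and, remarkably, the vector obtained from Theorem~\ref{Bernoulli-thm}(i) as $a\uparrow\frac1{2M}$ coincides with it, the two off-critical descriptions meeting at the unique parameter where ${\rm Leb}(\Omega_\alpha)={\rm Leb}(\Omega_\beta)=\frac12$. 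However, the argument that the past determines $x_c$ collapses at $a=\frac1{2M}$: with $\chi^c=0$ there is no contraction along the center, so the center coordinate is no longer slaved to the backward itinerary even though it still contributes zero entropy. Reconciling these facts — Bernoullicity in the presence of a genuinely neutral direction — is exactly what must be done.

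To carry out Step (a) I would verify Ornstein's very weak Bernoulli (VWB) criterion for $(g_{1/2M},{\rm Leb})$ relative to the finite generating partition $\{\Omega_\gamma:\gamma\in D\}$: one must bound, uniformly in name-length, the $\bar d$-distance between the conditional distributions of future $\Omega_\gamma$-names given two typical pasts, and show it tends to $0$. The $x_u$- and $x_s$-coordinates pose no difficulty (expanding, resp.\ uniformly contracting), so the estimate reduces to controlling the center fibers: conditioned on a typical $\tau_a$-itinerary, the position within a center fiber must equidistribute. Here the visits to $\Omega_\alpha$ and $\Omega_\beta$ along a ${\rm Leb}$-typical orbit drive the $\log$-length of a center interval as a mean-zero random walk — this is the content of $\chi^c=0$ — and the recurrence of this walk, combined with the mixing of all orders of Theorem~\ref{mix-thm}, should force the required $\bar d$-decay: even without net contraction, the fiber is repeatedly stretched across the full center and re-cut, destroying memory of its initial position. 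I expect this coupling estimate — turning the mean-zero random-walk heuristic into a quantitative $\bar d$-bound that survives the absence of a spectral gap in the neutral direction — to be the hard core of the proof. A possibly more robust alternative is to establish the VWB bounds for $a\in(\frac1{2M},\frac1{2M}+\varepsilon)$ \emph{uniformly} as $a\downarrow\frac1{2M}$ and pass to the limit; the difficulty only migrates, since the net center contraction rate $|\chi^c|$ tends to $0$ in the limit and uniformity must again be extracted from the recurrence of the random walk and Theorem~\ref{mix-thm}.
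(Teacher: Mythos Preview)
The statement you are attempting to prove is labelled a \emph{conjecture} in the paper; the authors give no proof and explicitly leave it open. There is therefore no ``paper's proof'' to compare your proposal against.

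Your proposal is not a proof but a plan, and you yourself acknowledge this. Step~(b), the entropy computation, is fine and matches the common limit of the two formulas in \eqref{entropy-leb} as $a\to\frac{1}{2M}$ from either side (though to make this rigorous at the neutral parameter one should compute the entropy directly rather than invoke Pesin's formula, whose hypotheses for piecewise maps with a zero exponent need care). Step~(a) is where the open problem lives. The mechanism behind Theorem~\ref{Bernoulli-thm}---that the coding map $\pi$ composed with $\phi_\alpha$ or $\phi_\beta$ is a measurable isomorphism onto an $(M+1)$-symbol Bernoulli shift---relies on $\chi^c\neq0$ to ensure the center coordinate is determined by the symbolic itinerary (Lemma~\ref{factor-prop} via $A_\alpha\cup A_\beta\subset A_{\alpha,\beta}$). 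At $a=\frac{1}{2M}$ this fails: ${\rm Leb}\circ\pi^{-1}(A_0)=1$, and the center fiber is \emph{not} resolved by the coding. Your suggestion to verify VWB directly is the natural fallback, but the input you invoke---recurrence of the mean-zero random walk plus mixing of all orders from Theorem~\ref{mix-thm}---does not obviously deliver the required $\bar d$-estimate: the walk's recurrence gives only polynomial control of center fibers, and mixing of all orders (even the $K$-property noted after Theorem~\ref{mix-thm}) is strictly weaker than Bernoulli, so cannot by itself close the gap. The uniform-in-$a$ limiting strategy faces the same obstruction, since the contraction rate $|\chi^c|$ vanishes in the limit. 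Producing a genuine $\bar d$-bound in this neutral regime is exactly the missing idea; until that is supplied, the proposal remains a heuristic outline consistent with the authors' decision to state the result as a conjecture.
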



\subsection{Rates of mixing}\label{rates-sec}

Let $T\colon X\to X$ be a measurable dynamical system preserving a probability measure $\nu$.
For a pair $(\varphi,\psi)$ of real-valued functions in $L^2(\nu)$,
define their correlations by
\[{\rm Cor}_n(T;\varphi,\psi;\nu)=\left|\int\varphi(\psi\circ T^n){\rm d}\nu-\int\varphi{\rm d}\nu\int\psi{\rm d}\nu\right|\text{ for }n\geq1.\]
Then $(T,\nu)$ is mixing if and only if
for any pair $(\varphi,\psi)$ of functions in $L^2(\nu)$,
${\rm Cor}_n(T;\varphi,\psi;\nu)\to0$ as $n\to\infty.$
As well-known examples 
show, the rate of this decay can become arbitrarily slow
even if $(T,\nu)$ is mixing.
So, one usually sets 
a suitable function space and considers rates of decay of correlations for functions that belong to this function space.

We review the result in \cite{T23} on rates of mixing for the heterochaos baker maps. 
We say $(F,{\rm Leb})$ is {\it exponentially mixing}
if for each $\eta\in(0,1]$
there exists $\lambda=\lambda(\eta)\in(0,1)$ such that 
for any pair $(\varphi,\psi)$ of real-valued 
H\"older continuous functions on $[0,1]^d$ with exponent $\eta$, there exists $C=C(\varphi,\psi)>0$ such that 
${\rm Cor}_n(F;\varphi,\psi;{\rm Leb})\leq C\lambda^n$ for all $n\geq1$.
\begin{theorem}[\cite{T23}, Theorem~B]\label{thm-exp-mix}
For any $a\in(0,\frac{1}{M})\setminus\{\frac{1}{2M}\}$,
$(F_a,{\rm Leb})$ is exponentially mixing.
\end{theorem}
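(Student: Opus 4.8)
The plan is to establish a genuine spectral gap for the transfer operator of $f_a$ on a suitably chosen Banach space and to deduce the exponential decay of correlations for H\"older observables in the usual way. The first reduction is from the three-dimensional map $g_a$ to the two-dimensional endomorphism $f_a$. Recall that $f_a$ is a factor of $g_a$ through the projection $p$, i.e.\ $p\circ g_a=f_a\circ p$, and that the fiber direction $x_s$ is uniformly contracted (by $1-Mb$ or $b$, with $b=\frac1M-a$). Approximating a H\"older observable on $[0,1]^3$ by one that is constant on the contracting $x_s$-fibers, with an error that is exponentially small after $n$ iterates, reduces ${\rm Cor}_n(g_a;\cdot,\cdot;{\rm Leb})$ to the corresponding correlation for $(f_a,{\rm Leb})$ up to a geometrically decaying term. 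Hence it suffices to prove that $(f_a,{\rm Leb})$ is exponentially mixing.

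For $f_a$ I would study the Perron--Frobenius operator $\mathcal L=\mathcal L_a$ associated with ${\rm Leb}$, which fixes the constant function, $\mathcal L 1=1$, because ${\rm Leb}$ is invariant (Lemma~\ref{preserve}). Writing $\mathcal L$ out over the $M+1$ inverse branches, every branch contracts the expanding coordinate $x_u$ (by $a$ or by $1-Ma$), so $\mathcal L$ is regularizing in $x_u$. The delicate coordinate is the center $x_c$: the $M$ inverse branches landing in $\Omega_\beta$ contract $x_c$ by $\frac1M$, each with weight $\frac{1-Ma}{M}$, whereas the single inverse branch landing in $\Omega_\alpha$ \emph{expands} $x_c$ by $M$ with weight $aM$. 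The heart of the argument is a Lasota--Yorke (Doeblin--Fortet) inequality on a space $\mathcal B_\eta$ encoding H\"older-$\eta$ regularity,
\[
\|\mathcal L^n\varphi\|_{\mathcal B_\eta}\le C\,\theta(\eta,a)^n\|\varphi\|_{\mathcal B_\eta}+C\,\|\varphi\|_{L^1},
\]
in which the contraction in $x_c$ is governed by the competition $(1-Ma)M^{-\eta}$ versus $aM^{1+\eta}$ coming from the good $\beta$-branches and the bad $\alpha$-branch. At $\eta=0$ these sum to $1$; their $\eta$-derivative at $0$ equals $(2Ma-1)\log M$, so the combined factor drops strictly below $1$ for small $\eta>0$ exactly when $a\neq\frac1{2M}$, and $\theta\to1$ as $a\to\frac1{2M}$. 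This is precisely where the excluded neutral parameter enters.

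Granting such an inequality, I would combine it with the compact embedding $\mathcal B_\eta\hookrightarrow L^1$ and the Hennion/Ionescu-Tulcea--Marinescu theorem to conclude that $\mathcal L$ is quasi-compact with essential spectral radius at most $\theta<1$. To identify the peripheral spectrum I would invoke the ergodic properties already available: ergodicity of $(f_a,{\rm Leb})$ (Theorem~\ref{erg-thm}) makes the eigenvalue $1$ simple, while mixing (Theorem~\ref{mix-thm}) excludes every other eigenvalue on the unit circle. Thus $\mathcal L$ has a spectral gap, $\mathcal L^n\varphi=\int\varphi\,{\rm d}{\rm Leb}+O(\rho^n)$ in $\mathcal B_\eta$ for some $\rho=\rho(\eta)\in(0,1)$; pairing against a second H\"older function $\psi$ gives ${\rm Cor}_n(f_a;\varphi,\psi;{\rm Leb})\le C\rho^n$, which is the claim, with $\lambda(\eta)=\rho(\eta)$.

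The step I expect to be the main obstacle is the very choice of $\mathcal B_\eta$, because the center direction is neither uniformly expanding nor uniformly contracting---this non-uniformity is the source of the heterochaos itself. For $a<\frac1{2M}$ (mostly expanding center) the average expansion of $x_c$ makes a plain H\"older or bounded-variation space adequate, and the estimate above runs cleanly. For $a>\frac1{2M}$ (mostly contracting center) the center behaves on average like a stable direction, and the $\alpha$-branch expansion of $x_c$ forces one to pass to a truly \emph{anisotropic} norm---smoothing along center-stable leaves and acting by duality along the unstable $x_u$-direction, in the spirit of the transfer-operator theory for piecewise hyperbolic maps (Gou\"ezel--Liverani)---so that this expansion is re-read as a contraction on test functions. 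Designing a single norm that covers both regimes, and controlling the contributions of the discontinuity set of the piecewise-affine map, are the genuinely technical points; the fixed finite Markov partition keeps the complexity of the discontinuities bounded, which is what makes those boundary terms tractable. The neutral value $a=\frac1{2M}$ lies outside the scope of this scheme, in agreement with the expected loss of exponential mixing there.
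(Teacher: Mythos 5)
Your spectral program is not the paper's route, and as written it has two genuine gaps. On the factual side, your one-step computation is correct and does locate the neutral parameter: $\frac{d}{d\eta}\bigl(aM^{1+\eta}+(1-Ma)M^{-\eta}\bigr)\big|_{\eta=0}=(2Ma-1)\log M$. But this is negative only for $a<\frac{1}{2M}$; for $a>\frac{1}{2M}$ your combined factor \emph{exceeds} $1$ for all small $\eta>0$, so the claim that it ``drops strictly below $1$ for small $\eta>0$ exactly when $a\neq\frac{1}{2M}$'' is wrong as stated, and your scheme covers at most the mostly expanding half unless the anisotropic construction you defer to is actually carried out. Here you missed the paper's cheap alternative: in \cite{T23} the case $a\in(0,\frac{1}{2M})$ is proved by inducing to a uniformly expanding Markov map with infinitely many branches whose inducing time $R$ has exponential tails, applying Young's general theorem \cite{You98}; the case $a\in(\frac{1}{2M},\frac{1}{M})$ then follows in one stroke from symmetry, with no new functional analysis. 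The involution $\iota$ of Lemma~\ref{inverse} conjugates a time reversal of $g_a=f_{a,\frac{1}{M}-a}$ to $g_{\frac{1}{M}-a}$, $\iota$ is affine (so preserves H\"older classes and ${\rm Leb}$), and correlations of an invertible measure-preserving system satisfy ${\rm Cor}_n(T;\varphi,\psi;\nu)={\rm Cor}_n(T^{-1};\psi,\varphi;\nu)$; hence the mostly contracting regime inherits exponential mixing from the mostly expanding one. Your reduction between $g_a$ and $f_a$ via the uniformly contracted $x_s$-fibers is fine and is indeed part of the standard bookkeeping.

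The deeper gap is that your Lasota--Yorke inequality is asserted on a space on which it cannot hold: $\mathcal L$ does not even preserve H\"older continuity. Fix $x_u$ and let $x_c$ cross $\frac{k}{M}$: the unique $\alpha$-preimage of $(x_u,x_c)$ jumps from the strip of $\Omega^+_{\alpha_k}$ to that of $\Omega^+_{\alpha_{k+1}}$ (its $x_u$-coordinate jumps by $a$, its $x_c$-coordinate from $1$ to $0$), so $\mathcal L\varphi$ is discontinuous along the lines $x_c=\frac{k}{M}$ even for smooth $\varphi$, and iteration proliferates such discontinuity lines. The standard repairs in dimension two (bounded variation, quasi-H\"older spaces) require piecewise \emph{uniform} expansion in all directions, which fails here because the center contracts on $\Omega_\alpha$; and the anisotropic norms of Gou\"ezel--Liverani type are designed for (piecewise) uniformly hyperbolic maps, whereas here the center direction expands and contracts non-uniformly along orbits, so there is no invariant cone structure to smooth along. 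This is not a pedantic point: the paper explicitly remarks that the existing functional-analytic results for partially hyperbolic maps \cite{CN17,BBL20} cover none of the heterochaos baker maps, which is precisely why \cite{T23} bypasses the one-step transfer operator by a tower. Your peripheral-spectrum step (simplicity of $1$ from ergodicity, absence of other unimodular eigenvalues from mixing) would be legitimate \emph{granted} quasi-compactness, but the Banach space whose existence carries the entire argument is missing; as it stands the proposal is a plausible research program for $a<\frac{1}{2M}$ and an incorrect estimate plus a placeholder for $a>\frac{1}{2M}$, not a proof of Theorem~\ref{thm-exp-mix}.
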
A proof of the exponential mixingness in Theorem~\ref{thm-exp-mix} for each $a\in(0,\frac{1}{2M})$ amounts to constructing a `tower', i.e., a uniformly expanding induced Markov map with infinitely many branches equipped with an inducing time $R$ with exponential tails, and then apply 
the general result of Young \cite{You98}. 
Due to the symmetry of $g_a=f_{a,\frac{1}{M}-a}$ with respect to the parameter and the symmetry of correlations, the exponential mixing for $a\in(\frac{1}{2M},\frac{1}{M})$ follows as a consequence. Meanwhile, for the mostly neutral parameter $a=\frac{1}{2M}$ we have
$\sum_{n}{\rm Leb}\{R>n\}=\sum_nn^{-1/2}=\infty$, and so Young's general result \cite{You99} on subexponential decay of correlations
does not apply.
See Conjecture~\ref{polynomial-con} and related numerical results below.


 The heterochaos baker maps may even be viewed as the simplest partially hyperbolic systems: for substantial parameter ranges, the uniform expansion in the $x_u$-direction dominates the maximal expansion in the $x_c$-direction. On the concept of partial hyperbolicity, we refer the reader to \cite{BDV04,BP74,P04} for example.
Young's method \cite{You98,You99}
of deducing nice statistical properties 
using towers with fast decaying tails has  been successfully implemented for 
some partially hyperbolic systems, e.g.,
\cite{ALP05,C02,C04,Do00} to deduce exponential decay of correlations.
With more functional analytic methods,
exponential decay of correlations was proved for 
certain partially hyperbolic diffeomorphisms \cite{CN17}, and
 for certain piecewise partially hyperbolic endomorphisms \cite{BBL20}.  
Clearly the heterochaos baker maps are covered by none of these existing results.

\subsection{Numerical computations of autocorrelations}
 Since the $x_c$-direction is most important, the coordinate function 
$\phi\colon (x_u,x_c,x_s)\in[0,1]^3\mapsto x_c\in[0,1]$ should capture an essential statistical information on the dynamics of $f_{a,b}$. 
For $M=2$
we numerically compute autocorrelations of $\phi$ from its typical time series.
Pick $x\in[0,1]^3$ and define $X=(X_i)_{i\geq0}$ by 
$X_i=\phi(f^i(x))$.
For a delay $n\ge 1$ and an orbit length $N\geq1$ define \[{\rm Cor}_{n,N}(X)=\left|\frac{1}{N}\sum^{N-1}_{i=0}(X_i-E_N(X))(X_{i+n}-E_N(X))\right|,\]
 where $E_N(X)=\frac{1}{N}\sum_{i=0}^{N-1}X_i$.
 By Birkhoff's ergodic theorem,  
  ${\rm Cor}_{n,N}(X)$
  converges to 
 ${\rm Cor}_n(f;\phi,\phi;{\rm Leb})$ as $N\to\infty$ for Lebesgue almost every $x\in[0,1]^3$.

 For certain ranges of $n$ we have computed
${\rm Cor}_{n,N}(X)$ with $N\gg n$.
The left panel in \textsc{Figure~\ref{fig:decay}} shows the exponential decay
as in Theorem~\ref{thm-exp-mix}. The right panel seems to indicate the polynomial decay of order $n^{-3/2}$. We put this as a conjecture.
\begin{conjecture}\label{polynomial-con}
For any pair $(\varphi,\psi)$ of H\"older continuous functions on $[0,1]^d$,
there exists a constant $C(\varphi,\psi)>0$ such that
\[{\rm Cor}_n(F_{\frac{1}{2M}};\varphi,\psi;{\rm Leb})\leq C(\varphi,\psi) n^{-3/2}\ \text{ for every }n\geq1.\] 
Moreover this estimate is optimal, i.e.,
there exist a pair $(\varphi,\psi)$ of H\"older continuous functions on $[0,1]^d$ and a constant 
$C'(\varphi,\psi)>0$ such that
\[{\rm Cor}_n(F_{\frac{1}{2M} };\varphi,\psi;{\rm Leb})\geq C'(\varphi,\psi) n^{-3/2}\ \text{ for every }n\geq1.\]
\end{conjecture}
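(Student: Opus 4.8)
The plan is to reduce the decay of correlations to a return-time analysis for the symmetric random walk that governs the center direction at the neutral parameter, and then to extract the sharp exponent $-\tfrac{3}{2}$ from the reflected-random-walk local limit theorem. At $a=\frac{1}{2M}$ one has ${\rm Leb}(\Omega_\alpha)={\rm Leb}(\Omega_\beta)=\frac12$, so $\phi^c/\log M$ takes the values $\pm1$ with equal Lebesgue probability and the cocycle $S_n(x)=\frac{1}{\log M}\sum_{i=0}^{n-1}\phi^c(F_a^i x)$ is a centered simple random walk on $\mathbb Z$. Using the Dyck-shift coding of Theorem~\ref{TY-thm}, I would identify $S_n$ with the signed reading position of a stack whose entries are the $M$-ary digits of the center coordinate: a symbol $\alpha_k$ pushes the digit $k-1$ ($x_c\mapsto x_c/M+(k-1)/M$), while $\beta_k$ pops the top digit ($x_c\mapsto Mx_c-(k-1)$). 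Under ${\rm Leb}$ the $M$-ary digits of $x_c$ are i.i.d.\ uniform, and the correlation of any digit functional is controlled by the law of $S_n$.

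The model computation is the autocovariance of $\phi(x)=x_c$. Writing $x_c=\sum_{j\ge1}c_jM^{-j}$, one has $\mathrm{Cov}(\phi,\phi\circ F_a^n)=\sum_{i,j\ge1}M^{-i-j}\,\mathrm{Cov}(c_i,c_j(n))$, where $c_j(n)$ is the $j$-th digit after $n$ iterates. Tracking the original top digit $c_1$, its stack position performs a simple walk started at $1$ with an absorbing barrier at $0$ (a pop of the top destroys the digit), so $\mathrm{Cov}(c_1,c_j(n))$ is proportional to the probability that this barrier walk sits at level $j$ at time $n$ without having hit $0$. Two facts then produce the rate: (i) the total survival probability decays like $n^{-1/2}$, but conditioned on survival the walk lies at depth $\asymp\sqrt n$, so the geometric weight $M^{-j}$ suppresses that contribution super-polynomially; (ii) by the reflection principle the probability of returning to a fixed bounded depth $j$ without hitting the barrier is $\asymp j\,n^{-3/2}$, the excursion (first-return) rate. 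Summing $\sum_j M^{-j}\cdot j\,n^{-3/2}$ over bounded depths gives a convergent coefficient, so $\mathrm{Cov}(\phi,\phi\circ F_a^n)\asymp n^{-3/2}$ with a strictly positive leading constant; this yields both the model upper bound and the optimality assertion for the pair $(\varphi,\psi)=(\phi,\phi)$, the per-$n$ inequality for every $n\ge1$ following from the asymptotic together with the trivial boundedness for bounded $n$.

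To make the digit increments genuinely random and to upgrade the upper bound to all H\"older pairs, I would run the argument through the twisted transfer operators $\mathcal L_t$ in the center variable, viewing $\mathcal L_t$ as an analytic perturbation of the transfer operator of the uniformly expanding base map $\tau_a$, whose spectral gap supplies a dominant simple eigenvalue $\lambda(t)=1-\tfrac{\sigma^2}{2}t^2+o(t^2)$ near $t=0$ via the Nagaev--Guivarc'h method, where $\sigma^2$ is the variance of the increment. The correlation is then a contour integral $\int\hat\varphi(t)\,\lambda(t)^n\,\overline{\hat\psi(t)}\,\mathrm dt$ plus faster-decaying terms, and the parabolic shape of $\lambda$ combined with the vanishing of the fiber-average mode (the mechanism replacing $n^{-1/2}$ by $n^{-3/2}$) returns the claimed exponent. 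H\"older regularity enters to approximate $\varphi,\psi$ by functions of finitely many digits with errors controlled at scale $M^{-\sqrt n}$, which is precisely why the $n^{-1/2}$ survival term never surfaces in the final bound.

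I expect the main obstacle to be the boundary effect: since $x_c$ is confined to $[0,1]$, the scale walk is absorbed at stack-height $0$ rather than being a free $\mathbb Z$-extension, so the clean local limit theorem must be replaced by its barrier version, and one must establish uniform Dolgopyat-type bounds $\|\mathcal L_t^n\|\le C\theta^n$ ($\theta<1$) for $t$ bounded away from $0$ to exclude spurious slowly-decaying contributions. Controlling these twisted operators uniformly in $t$---and in particular handling the infinitely many branches of the induced map together with the non-summable, critical ($\beta=\tfrac12$) return-time tail through operator renewal theory \`a la Sarig--Gou\"ezel--Melbourne--Terhesiu---will be the technically decisive step.
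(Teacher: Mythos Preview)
The statement you are attempting to prove is labeled a \emph{Conjecture} in the paper and is presented without proof; the authors support it solely with the numerical autocorrelation plots at $a=\tfrac14$, $M=2$. There is therefore no proof in the paper against which to compare your proposal.

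Your proposal is a plausible research program but not a proof. The heuristic reduction to a simple random walk with an absorbing barrier and the first-return rate $n^{-3/2}$ is convincing for the model observable $\phi(x)=x_c$, and you correctly identify the relevant machinery (Nagaev--Guivarc'h, operator renewal theory). However, your final paragraph openly lists the decisive technical steps as unfinished: uniform Dolgopyat-type estimates for the twisted operators away from $t=0$, and---critically---the handling of the non-summable return-time tail with exponent $\beta=\tfrac12$. The paper itself notes that $\sum_n{\rm Leb}\{R>n\}=\sum_n n^{-1/2}=\infty$ at $a=\tfrac{1}{2M}$, so Young's framework does not apply, and the operator-renewal results you invoke are tailored either to summable tails or to infinite-measure settings and do not deliver $n^{-3/2}$ correlation decay for a finite invariant measure off the shelf. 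Until those steps are actually carried out, what you have written is a strategy rather than a proof, and the conjecture remains open.
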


\begin{figure}
    \begin{center}
    \includegraphics[]{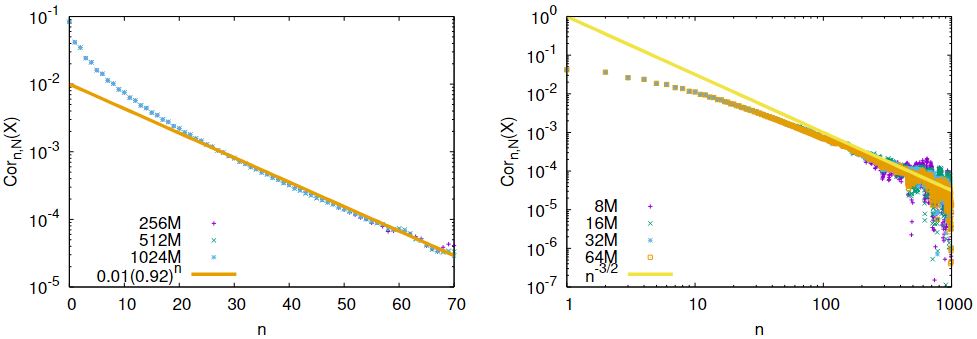}
    \end{center}
    \caption{ 
    Decay of autocorrelations for
    $\phi\colon (x_u,x_c,x_s)\in[0,1]^3\mapsto x_c\in[0,1]$, $a=\frac{1}{3}$ (left) and $a=\frac{1}{4}$ (right) with $M=2$.
The left panel shows an exponential decay 
${\rm Cor}_{n,N}(X)\approx0.01(0.92)^n$, 
whereas the right one shows a
    polynomial decay 
${\rm Cor}_{n,N}(X)\approx n^{-3/2}$.
The orbit length $N$ is 
$256\times 10^6$, $512\times 10^6$, $1024\times 10^6$ for the left  and $8\times 10^6$,  $16\times 10^6$, $32\times 10^6$,  $64\times 10^6$ for the right. 
The initial conditions of the orbits have been chosen according to the random number generator.
All the axes but the horizontal one in the left panel are in the logarithmic scale.}
\label{fig:decay}
\end{figure}

\section{Preliminaries for proofs of the main results}
Contents of
this section are preliminaries needed for the proofs of the main results presented in Section~2. Starting with the definition
of the Dyck shift
in Section~\ref{Dyck},
 we delve into its structure in Section~\ref{str-1-sec} and Section~\ref{embed-sec}.
 In Section~\ref{code-sec} we state the connection between the heterochaos baker maps and the Dyck shift that underlies the proofs of the main results. In Section~\ref{str-code} we analyze the structure of coding maps. 
  In Section~\ref{str-mme} we recall the construction of the two ergodic MMEs for the Dyck shift and that for the heterochaos baker maps. In Section~\ref{lyapunov-sec} we introduce Lyapunov exponents for invariant measures of $f_{a,b}$ and investigate their properties. In Section~\ref{projection-sec} we clarify a correspondence between invariant measures of $f_a$ and that of $f_{a,b}$. In Section~\ref{leb-section} we clarify for which parameters the Lebesgue measure is invariant.

\subsection{The Dyck shift}\label{Dyck}
Let $\mathbb N_0=\mathbb N\cup\{0\}$.  
Let $S$ be a non-empty finite discrete set and let $S^{\mathbb N_0}$, $S^{\mathbb Z}$ denote one- or two-sided Cartesian product topological space of $S$ respectively. The left shift acts continuously on these two spaces.
  A {\it subshift} on $S$ is a shift invariant closed subset of $S^{\mathbb N_0}$ or    $S^{\mathbb Z}$.     
    For a subshift $\Sigma$,
 let $L(\Sigma)$
 denote the set of finite words in $S$ that appear in some elements of $\Sigma$.
 For convenience, we include the empty word $\emptyset$ in  $L(\Sigma)$
 and let $\emptyset \xi=\xi\emptyset=\xi$ for  $\xi\in L(\Sigma)\setminus\{\emptyset\}$.
Elements of $L(\Sigma)\setminus\{\emptyset\}$ are called {\it admissible words}.
For a two-sided subshift $\Sigma$
and $j\in\mathbb Z$, $n\in\mathbb N$, $\xi=\xi_1\cdots\xi_n\in L(\Sigma)$, we 
define 
\begin{equation}\label{cylinder-def}\Sigma(j;\xi)=\{(\omega_i)_{i\in\mathbb Z}\in\Sigma\colon \omega_{i+j-1}=\xi_{i}\ \text{ for  }i=1,\ldots, n\}.\end{equation}

The Dyck shift has its origin in the theory of languages \cite{AU68}.
Krieger \cite{Kri00} introduced a certain class of subshifts having some algebraic property, called property A subshifts. The Dyck shift is a fundamental shift space in this class. 
 It is a subshift on the alphabet 
$D$ in \eqref{D}
consisting of $M$ brackets, $\alpha_i$ left and $\beta_i$ right in pair,
whose admissible words are words of legally aligned brackets.
To be more precise,
let $D^*$ denote the set of finite words in $D$.
Consider the monoid with zero, with $2M$ generators in $D$ and the unit element
$1$ with relations 
\[\alpha_i\cdot\beta_j=\delta_{ij},\
0\cdot 0=0\text{ and }\] \[\gamma\cdot 1= 1\cdot\gamma=\gamma,\
 \gamma\cdot 0=0\cdot\gamma=0
\text{ for }\gamma\in D^*\cup\{ 1\},\]
where $\delta_{ij}$ denotes Kronecker's delta.
For $k\in\mathbb N$ and $\gamma_1\gamma_2\cdots\gamma_k\in D^*$ 
let
\[{\rm red}(\gamma_1\cdots\gamma_k)=\prod_{j=1}^k\gamma_j.\]
The one- and two-sided Dyck shifts on $2M$ symbols are defined by
\[\begin{split}
\Sigma_{D}^+&=\{\omega\in D^{\mathbb N_0}\colon {\rm red}(\omega_j\cdots \omega_k)\neq0\text{ for }j,k\in\mathbb N_0\text{ with }j<k\},\\
\Sigma_{D}&=\{\omega\in D^{\mathbb Z}\colon {\rm red}(\omega_j\cdots \omega_k)\neq0\text{ for }j,k\in\mathbb Z\text{ with }j<k\},\end{split}\]
respectively.

Another way to define $\Sigma_D^+$ and $\Sigma_D$ is the following. Consider a labeled directed graph that consists of infinitely many vertices $V_{ij}$, with $i\in\mathbb N_0$, $j\in\{1,\ldots,M^i\}$
together with edges each labeled with a symbol in $D$.
Each vertex $V_{ij}$ has 
$M$ outgoing edges
 \[V_{ij}\stackrel{\alpha_1}{\longrightarrow}V_{i+1,Mj-M+1},\ \ldots,\  V_{ij}\stackrel{\alpha_{M-1}}{\longrightarrow}V_{i+1,Mj-1},\ V_{ij}\stackrel{\alpha_M}{\longrightarrow}V_{i+1,Mj}\]
 and $M$ incoming edges
 \[V_{ij}\stackrel{\beta_1}{\longleftarrow}V_{i+1,Mj-M+1},\ \ldots,\ V_{ij}\stackrel{\beta_{M-1}}{\longleftarrow}V_{i+1,Mj-1},\ V_{ij}\stackrel{\beta_M}{\longleftarrow}V_{i+1,Mj}.\]
 The bottom vertex $V_{01}$ has additional $M$ loop edges 
 \[V_{01}\stackrel{\beta_1}{\longrightarrow}V_{01},\ \ldots,\  V_{01}\stackrel{\beta_{M-1}}{\longrightarrow}V_{01},\ V_{01}\stackrel{\beta_M}{\longrightarrow}V_{01}.\]
Then $\Sigma_D^+$ coincides with the set of one-sided infinite sequences of elements of $D$ that are associated with the infinite labeled paths in this graph starting at $V_{01}$, and $\Sigma_D$ coincides with the invertible extension of $\Sigma_D^+$: 
\[\Sigma_{D}=\{\omega=(\omega_i)_{i\in\mathbb Z}\in D^{\mathbb Z}\colon \omega_j\omega_{j+1}\cdots\in\Sigma_D^+ \text{ for all }j\in\mathbb Z\}.\]

\textsc{Figure}~\ref{d-dyck} shows 
part of a labeled directed graph associated with the Dyck shift with $M=2$. The set 
of admissible words coincides with the set of strings of labels associated with the finite paths in the graph starting at the bottom vertex. Since $\Sigma_D^+$ and $\Sigma_D$ have infinitely many forbidden words, they are not topological Markov shifts.

Let $\sigma$ denote the left shift acting on $D^{\mathbb N_0}$ or $D^\mathbb Z$. Clearly we have 
$\sigma(\Sigma_D^+)=\Sigma_D^+$ and $\sigma(\Sigma_D)=\Sigma_D$. For simplicity, we still use the letter $\sigma$ to denote the restriction of the left shift to $\Sigma_D^+$ or $\Sigma_D$. Krieger \cite{Kri74} proved the existence of exactly two ergodic MMEs for the two-sided Dyck shift. The corresponding statement holds for the one-sided Dyck shift.

\begin{theorem}[\cite{Kri74}]\label{k-thm}
There exist exactly two shift invariant ergodic Borel probability measures of maximal entropy $\log(M+1)$ for $(\Sigma_D,\sigma)$. 
They are $(\frac{1}{M+1},\ldots,\frac{1}{M+1})$-Bernoulli and 
 charge any non-empty open subset of  $\Sigma_D$.
\end{theorem}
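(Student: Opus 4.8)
The plan is to produce the two measures explicitly from a \emph{stack coding}, to read off that they are Bernoulli of entropy $\log(M+1)$, to identify $\log(M+1)$ as the topological entropy by counting admissible words, and finally to rule out any further ergodic MME by a dichotomy based on whether the stack escapes to infinity. \textbf{Construction.} Reading an element of $\Sigma_D^+$ from left to right and reducing as we go, the current reduced word is always a block of unmatched right brackets followed by a block of unmatched left brackets; call the latter block the \emph{stack}. Appending a left bracket $\alpha_i$ pushes the colour $i$ (an $M$-fold choice), while appending a right bracket is admissible only if it matches the top colour of the stack, a single forced choice, unless the stack is empty. Encode each step by a \emph{move} in the $(M+1)$-symbol alphabet $\{1,\dots,M,\ast\}$, where $i$ records ``push colour $i$'' and $\ast$ records ``pop the top''. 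Let $\mu_\alpha$ be the measure whose bilateral move sequence is i.i.d.\ uniform on these $M+1$ symbols; since the resulting height walk has drift $\frac{M-1}{M+1}>0$, the stack tends to $+\infty$, every right bracket is matched almost surely, and the pop colours, hence the lift to $\Sigma_D$, are recovered a.e.\ by non-crossing matching. Define $\mu_\beta$ as the image of $\mu_\alpha$ under the involution that reverses time and swaps $\alpha_i\leftrightarrow\beta_i$, which is an automorphism of $\Sigma_D$.

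\textbf{Properties and maximality.} By construction both measures are shift invariant, and the move coordinate exhibits each as $(\frac{1}{M+1},\dots,\frac{1}{M+1})$-Bernoulli, so $h(\mu_\alpha)=h(\mu_\beta)=\log(M+1)$ and both are ergodic with full support, since any admissible word is realised by a positive-measure cylinder of moves. They are distinct: under $\mu_\alpha$ each $\alpha_k$ has frequency $\frac1{M+1}$ while the right brackets together carry mass $\frac1{M+1}$, and oppositely for $\mu_\beta$. To see these are MMEs I bound the topological entropy from above. The number of admissible words of length $n$ equals the number of length-$n$ paths from the empty stack in the coloured stack graph, and a generating-function computation for coloured Dyck excursions with up-weight $M$ and down-weight $1$ places the dominant singularity at $\frac1{M+1}$; hence $\#L_n(\Sigma_D)$ grows like $(M+1)^n$ and $h_{\mathrm{top}}(\sigma)=\log(M+1)$, so $\mu_\alpha,\mu_\beta$ attain the maximum.

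\textbf{Uniqueness.} Let $\mu$ be an ergodic MME. By ergodicity the event that \emph{every right bracket is matched} (equivalently, the forward stack tends to $+\infty$) has measure $0$ or $1$, and likewise for left brackets. If the former has full measure, the non-crossing matching recovers the colour of every right bracket from the left/right pattern together with the left colours; thus the factor map $\pi_\alpha$ collapsing all right brackets to one symbol is a.e.\ invertible. As $(\pi_\alpha)_*\mu$ is carried by a full shift on $M+1$ symbols, $\log(M+1)=h(\mu)=h((\pi_\alpha)_*\mu)\le\log(M+1)$ forces $(\pi_\alpha)_*\mu$ to be uniform Bernoulli, whence $\mu=\mu_\alpha$. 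Symmetrically, if every left bracket is matched a.e., then $\mu=\mu_\beta$.

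\textbf{The obstacle.} The remaining, genuinely hard case is when both unmatched right and unmatched left brackets occur with positive density; one must show no such $\mu$ is maximal. The difficulty is that the colours of the unmatched brackets are free and carry extra entropy, while only the matched part is slaved to the pattern, so the crude splitting $h(\mu)\le h(\text{pattern})+(\text{unmatched colour entropy})$ does not by itself drop below $\log(M+1)$. What is needed is Krieger's sharper estimate showing that a bracket pattern forced to sustain a positive density of two-sidedly unmatched brackets is itself entropy-deficient, so that $h(\mu)<\log(M+1)$; I expect this neutral-case deficit to be the principal obstacle. Once it is established, the dichotomy yields exactly the two ergodic MMEs $\mu_\alpha$ and $\mu_\beta$, both $(\frac1{M+1},\dots,\frac1{M+1})$-Bernoulli of entropy $\log(M+1)$ and of full support, as claimed.
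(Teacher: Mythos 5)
Your construction and the two completed branches of your dichotomy track the paper's proof quite closely: your move alphabet $\{1,\ldots,M,\ast\}$ is exactly the collapse map $\phi_\alpha$ of \eqref{phi-alpha}, your ``non-crossing matching'' recovery of the pop colours is the inverse $\psi_\alpha$ of Lemma~\ref{include-lem}, your drift argument showing that every right bracket is matched $\mu_\alpha$-a.s.\ is Lemma~\ref{gyak-lem} (Birkhoff applied to the inverted shift), and your uniqueness mechanism---the collapse is a.e.\ invertible, so an MME pushes forward to the unique MME of the $(M+1)$-full shift---is the paper's. Two remarks on the parts you did complete: the generating-function computation of $h_{\rm top}$ is dispensable, since once every ergodic measure is shown to embed measure-theoretically into an $(M+1)$-full shift the bound $h(\mu)\le\log(M+1)$ follows with no word counting; and your parenthetical ``every right bracket is matched (equivalently, the forward stack tends to $+\infty$)'' is not an equivalence---in the recurrent case the height returns to every level infinitely often and \emph{all} brackets of both kinds are matched, while the stack does not escape.

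The genuine gap is your final case, and you have misdiagnosed what is needed to close it. No entropy-deficiency estimate for patterns carrying a positive density of two-sidedly unmatched brackets of both kinds is required, because no ergodic measure charges that event at all: the case is vacuous, not merely sub-maximal. Indeed, for ergodic $\mu$, Birkhoff's theorem gives an a.s.\ drift $m=\mu\bigl(\bigcup_{k}\Sigma_{D,0}(\alpha_k)\bigr)-\mu\bigl(\bigcup_{k}\Sigma_{D,0}(\beta_k)\bigr)$. If $m>0$, then from any right bracket the leftward window sums start at $-1$, move by $\pm1$ steps and tend to $+\infty$, hence hit $0$: every right bracket is matched a.s.\ (your case 1); symmetrically $m<0$ gives your case 2. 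If $m=0$, recurrence of zero-mean integer-valued ergodic cocycles (Atkinson's theorem; this is precisely the content of Krieger's trichotomy, Lemma~\ref{trichotomy}, proved in \cite[pp.~102--103]{Kri74}) forces the height walk to return to every level infinitely often in both directions, so all brackets of \emph{both} kinds are matched a.s.; such $\mu$ is carried by $A_0\subset B_\alpha\cap B_\beta$ as in \eqref{include-red}, and your case-1 argument applies to it, where maximality fails because the collapsed measure has $\ast$-frequency $\tfrac12\neq\tfrac{1}{M+1}$, so it cannot equal the uniform Bernoulli measure (equivalently, $\mu\neq\mu_\alpha$ since $\mu_\alpha(A_\alpha)=1$ by \eqref{measure-nu}). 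Thus the single missing ingredient is a soft recurrence lemma for the zero-drift walk, not the ``sharper entropy estimate'' you anticipated; the quantitative neutral-case bound $h\le\log\sqrt{4M}$ of Lemmas~\ref{ruelle-ineq} and \ref{zero-u} is needed in this paper only for the heterochaos baker maps, where the coding fails to be injective over $A_0$, and plays no role in Krieger's theorem for the Dyck shift itself.
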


\subsection{Structure of ergodic measures for the two-sided Dyck shift}\label{str-1-sec}
In this and the next subsections, following Krieger \cite{Kri74} we delve into the structure of the two-sided Dyck shift.
A key ingredient is a family of functions $H_i\colon \Sigma_D\to\mathbb Z$ ($i\in\mathbb Z$) given by
      \begin{equation}\label{H-def}H_i(\omega)=\begin{cases}\sum_{j=0}^{i-1} \sum_{k=1}^{M}(\delta_{\alpha_k,\omega_j}-\delta_{\beta_k,\omega_j})&\text{ for }  i\geq1,\\\sum_{j=i}^{-1} \sum_{k=1}^{M}(\delta_{\beta_k,\omega_j}-\delta_{\alpha_k,\omega_j})&\text{ for } i\leq -1,\\
      0&\text{ for }i=0.\end{cases}\end{equation}
For $i$, $j\in\mathbb Z$ define \[\{H_i=H_j\}=\{\omega\in\Sigma_D\colon H_{i}(\omega)=H_j(\omega)\}.\]

To describe ergodic shift invariant measures, we introduce three pairwise disjoint shift invariant Borel sets:
   \[\begin{split}A_0&=\bigcap_{i=-\infty }^{\infty}\left(\left(\bigcup_{
   j=1}^\infty\{ H_{i+j}=H_i\}\right)\cap\left(\bigcup_{j=1}^\infty\{ H_{i-j}=H_i\}\right)\right);\\
A_\alpha&=\left\{\omega\in\Sigma_D\colon
\lim_{i\to\infty}H_i(\omega)
=\infty\ \text{ and } \ \lim_{i\to-\infty}H_i(\omega)=-\infty\right\};\\
A_\beta&=\left\{\omega\in\Sigma_D\colon
\lim_{i\to\infty}H_i(\omega)=-\infty\ \text{ and } \
\lim_{i\to-\infty}H_i(\omega)=\infty\right\}.\end{split}\]

\begin{lemma}[\cite{Kri74}, pp.102--103]\label{trichotomy}
If $\nu\in M^{\rm e}(\Sigma_D,\sigma)$, then either $\nu(A_0)=1$, 
$\nu(A_\alpha)=1$ or $\nu(A_\beta)=1$.
\end{lemma}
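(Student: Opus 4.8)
The plan is to reduce everything to the asymptotic behavior of the one-dimensional walk $(H_i(\omega))_{i\in\mathbb Z}$ and to exploit ergodicity through its mean drift. First I would introduce the increment function $s\colon\Sigma_D\to\{-1,+1\}$ by $s(\omega)=+1$ if $\omega_0\in\{\alpha_1,\ldots,\alpha_M\}$ and $s(\omega)=-1$ if $\omega_0\in\{\beta_1,\ldots,\beta_M\}$, so that $H_i(\omega)=\sum_{j=0}^{i-1}s(\sigma^j\omega)$ for $i\ge1$ is a Birkhoff sum and, by a direct check from \eqref{H-def}, the cocycle identity $H_{i+j}(\omega)-H_i(\omega)=H_j(\sigma^i\omega)$ holds for all $i,j\in\mathbb Z$. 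A short computation with this identity rewrites the building blocks of $A_0$ as shifts of two return sets: setting $R^+=\bigcup_{j\ge1}\{H_j=0\}$ and $R^-=\bigcup_{j\ge1}\{H_{-j}=0\}$, one gets $\bigcup_{j\ge1}\{H_{i+j}=H_i\}=\sigma^{-i}R^+$ and $\bigcup_{j\ge1}\{H_{i-j}=H_i\}=\sigma^{-i}R^-$, whence $A_0=\bigcap_{i\in\mathbb Z}\sigma^{-i}(R^+\cap R^-)$. Since the three sets are shift invariant and pairwise disjoint, ergodicity forces each of $\nu(A_0),\nu(A_\alpha),\nu(A_\beta)$ to lie in $\{0,1\}$ and at most one of them to equal $1$; the entire content of the lemma is therefore to prove $\nu(A_0\cup A_\alpha\cup A_\beta)=1$.

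For this I would split according to the sign of the drift $\bar s=\int s\,{\rm d}\nu$. Applying Birkhoff's ergodic theorem to the invertible ergodic system $(\Sigma_D,\sigma,\nu)$ gives $\frac1i H_i\to\bar s$ as $i\to+\infty$ and $\frac1n H_{-n}\to-\bar s$ as $n\to\infty$, almost everywhere. If $\bar s>0$ this yields $H_i\to+\infty$ and $H_{-n}\to-\infty$, that is $\omega\in A_\alpha$ for a.e. $\omega$, so $\nu(A_\alpha)=1$; symmetrically $\bar s<0$ gives $\nu(A_\beta)=1$. This disposes of the two drifting cases with no extra input.

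The delicate case is the neutral one $\bar s=0$, where I expect the main obstacle to lie. Here Birkhoff only yields $H_i/i\to0$, which is far from recurrence, and a zero-mean cocycle over a general ergodic base need not return to its starting level a priori. The right tool is the recurrence theorem for mean-zero integer-valued cocycles (Atkinson): for an ergodic system and an integrable, mean-zero, integer-valued observable, the Birkhoff sums vanish infinitely often almost everywhere. Applying it to $s$ under $\sigma$ gives $\nu(R^+)=1$, and applying it to $s\circ\sigma^{-1}$ under $\sigma^{-1}$ (still ergodic and mean zero) gives the backward recurrence $\nu(R^-)=1$. Then shift invariance yields $\nu(\sigma^{-i}(R^+\cap R^-))=1$ for every $i\in\mathbb Z$, and intersecting these countably many full-measure sets gives $\nu(A_0)=1$.

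In summary, the sign of $\bar s$ selects exactly one of the three sets, and the only nonelementary ingredient is the mean-zero recurrence used to handle $\bar s=0$; the remaining steps are the cocycle bookkeeping identifying the defining events of $A_0$ with shifts of $R^\pm$, together with a routine application of Birkhoff's theorem in the drifting cases.
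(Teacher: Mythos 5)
Your proposal is correct, and every step of the bookkeeping checks out: the sign flip in \eqref{H-def} for $i\le -1$ is exactly what makes $(H_i)_{i\in\mathbb Z}$ the standard cocycle generated by your increment function $s$, so the identity $H_{i+j}-H_i=H_j\circ\sigma^i$ holds for all $i,j\in\mathbb Z$, the identification $A_0=\bigcap_{i\in\mathbb Z}\sigma^{-i}(R^+\cap R^-)$ is valid, and the drifting cases follow from Birkhoff's theorem applied to $\sigma$ and to $\sigma^{-1}$ as you say. Note that the paper itself does not prove this lemma but cites Krieger, so the comparison is with the argument of \cite{Kri74}, pp.~102--103: that argument rests on the same dichotomy according to the frequency of opening versus closing brackets (your drift $\bar s$), but the balanced case is handled by hand, necessarily so, since Atkinson's recurrence theorem (1976) postdates Krieger's paper. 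Your diagnosis of the neutral case is exactly right --- $H_i/i\to 0$ alone does not exclude a sublinear escape $H_i\to+\infty$, so some recurrence input is genuinely needed --- and Atkinson's theorem is a legitimate and economical way to supply it. It is worth knowing, though, that for this particular cocycle the appeal to Atkinson can be replaced by an elementary argument exploiting the unit steps: the set $E^+=\{\omega\colon H_j(\omega)>0\text{ for all }j\ge1\}$ has $\nu(E^+)=0$, because if $k_1<k_2$ are epochs with $\sigma^{k_1}\omega,\sigma^{k_2}\omega\in E^+$ then $H_{k_2}(\omega)>H_{k_1}(\omega)$, so the number of such epochs up to time $n$ is bounded by the range $\max_{k\le n}H_k-\min_{k\le n}H_k$, which is $o(n)$ a.e.\ when $\bar s=0$, while Birkhoff would force these epochs to have density $\nu(E^+)$; combined with the symmetric statement for $E^-=\{H_j<0\text{ for all }j\ge1\}$ and the fact that a $\pm1$-step walk cannot jump over $0$, this gives $\nu(R^+)=\nu(R^-)=1$ directly. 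In short, your route is correct and more modular; the hands-on route is self-contained and closer in spirit to the cited source.
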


\begin{figure}
\begin{center}
\includegraphics[height=7cm,width=11cm]{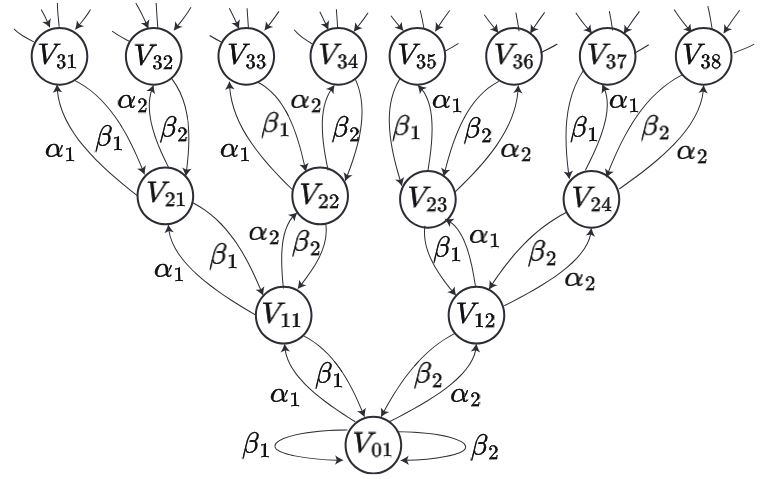}
\caption
{Part of a labeled directed graph associated with the Dyck shift with $M=2$. The upward edges are labeled with $\alpha_1$ or $\alpha_2$. The downward edges are labeled with $\beta_1$ or $\beta_2$.}\label{d-dyck}
\end{center}
\end{figure}

\subsection{Measure-theoretic embeddings of the full shift }\label{embed-sec} 
We introduce two different full shifts on $M+1$ symbols
 \[\Sigma_\alpha=\{\alpha_1,\ldots,\alpha_{M},\beta\}^{\mathbb Z}\quad\text{and}\quad\Sigma_\beta=\{\alpha,\beta_1,\ldots,\beta_M\}^{\mathbb Z}.\]
  Let $\sigma_\alpha$, $\sigma_\beta$ denote the left shifts acting on $\Sigma_\alpha$, $\Sigma_\beta$ respectively.
With the notation in \eqref{cylinder-def}
we introduce two shift invariant Borel sets: 
\begin{equation}\label{2-sets}\begin{split}B_\alpha&=\bigcap_{i=-\infty}^\infty\bigcup_{k=1}^M\left(\Sigma_{D}(i;\alpha_k)\cup\left(\Sigma_{D}(i;\beta_k)\cap\bigcup_{j=1}^\infty\{ H_{i-j+1}=H_{i+1}\}\right)\right);\\
B_\beta&=\bigcap_{i=-\infty }^\infty\bigcup_{k=1}^M\left(\Sigma_{D}(i;\beta_k)\cup
\left(\Sigma_{D}(i;\alpha_k)\cap \bigcup_{j=1}^\infty\{H_{i+j}=H_i\}\right)\right).\end{split}\end{equation}
The set $B_\alpha$ (resp. $B_\beta$)
is precisely the set of sequences in $\Sigma_D$ such that any right (resp. left) bracket in the sequence is closed.
Using the relations in Section~\ref{Dyck} one can check that
\begin{equation}\label{include-red}A_0\cup A_\alpha\subset B_\alpha\ \text{ and }\ A_0\cup A_\beta\subset B_\beta.\end{equation}

Define $\phi_\alpha\colon B_\alpha\to \Sigma_\alpha$ by
\[
(\phi_\alpha(\omega))_i=\begin{cases}
  \beta&\text{ if }\omega_i\in\{\beta_1,\ldots,\beta_M\},\\
  \omega_i&\text{ otherwise.}
\end{cases}\]
In other words, $\phi_\alpha(\omega)$ is obtained by replacing all $\beta_k$, $k\in\{1,\ldots,M\}$ in $\omega$ by $\beta$. Clearly $\phi_\alpha$ is continuous.
Similarly, define $\phi_\beta\colon B_\beta\to \Sigma_\beta$ by
\[(\phi_\beta(\omega))_i=\begin{cases}
    \alpha&\text{ if }\omega_i\in\{\alpha_1,\ldots,\alpha_M\},\\
    \omega_i&\text{ otherwise.}
\end{cases}\]In other words, $\phi_\beta(\omega)$ is obtained by replacing all $\alpha_k$, $k\in\{1,\ldots,M\}$ in $\omega$ by $\alpha$.
 Clearly $\phi_\beta$ is continuous too.
 We put
\[K_\alpha=\phi_\alpha(B_\alpha)\ \text{ and }\ K_\beta=\phi_\beta(B_\beta).\]

Similarly to \eqref{H-def}, 
for each $i\in\mathbb Z$ we define $ H_{\alpha,i}\colon \Sigma_\alpha\to\mathbb Z$ by       
      \[\begin{split}H_{\alpha,i}(\zeta)&=\begin{cases}\sum_{j=0}^{i-1} \sum_{k=1}^{M}(\delta_{\alpha_k,\zeta_j}-\delta_{\beta,\zeta_j})&\text{ for }  i\geq1,\\\sum_{j=i}^{-1} \sum_{k=1}^{M}(\delta_{\beta,\zeta_j}-\delta_{\alpha_k,\zeta_j})&\text{ for } i\leq -1,\\
      0&\text{ for }i=0.\end{cases}\end{split}\]
Define $\psi_\alpha\colon K_\alpha\to D^\mathbb Z$ by
\[(\psi_\alpha(\zeta))_i=\begin{cases}
 \beta_k&\text{ if }\zeta_i=\beta,\ \zeta_{s_\alpha(i,\zeta)}=
  \alpha_k,\ k\in\{1,\ldots,M\},\\
  \zeta_i&\text{ otherwise, }
\end{cases}\]
where \[s_\alpha(i,\zeta)=\max\{j<i+1\colon  H_{\alpha,j}(\zeta)= H_{\alpha,i+1}(\zeta)\}.\]
Clearly $\psi_\alpha$ is continuous.
Similarly, for each $i\in\mathbb Z$ we define 
$H_{\beta,i}\colon \Sigma_\beta\to\mathbb Z$ by 
\[\begin{split}
            H_{\beta,i}(\zeta)&=\begin{cases}\sum_{j=0}^{i-1} \sum_{k=1}^{M}(\delta_{\alpha,\zeta_j}-\delta_{\beta_k,\zeta_j})&\text{ for }  i\geq1,\\\sum_{j=i}^{-1} \sum_{k=1}^{M}(\delta_{\beta_k,\zeta_j}-\delta_{\alpha,\zeta_j})&\text{ for } i\leq -1,\\
      0&\text{ for }i=0.\end{cases}\end{split}\]
Define $\psi_\beta\colon K_\beta\to B_\beta$ by
\[(\psi_\beta(\zeta))_i=\begin{cases}
   \alpha_k&\text{ if }\zeta_i=\alpha,\ \zeta_{s_\beta(i,\zeta)}=\beta_k,\ k\in\{1,\ldots,M\},\\
    \zeta_i&\text{ otherwise, }
\end{cases}\]
where \[s_\beta(i,\zeta)=\min\{j>i\colon  H_{\beta,j}(\zeta)= H_{\beta,i}(\zeta)\}.\] 
Clearly $\psi_\beta$ is continuous too.

Let us summarize the objects introduced above in the following diagram:
 \[\begin{split}
  \xymatrix{
    & B_\alpha\cup   B_\beta \ar@<-0.8ex>[ld]_{\phi_\alpha } 
 \ar@<0.8ex>[rd]^{\phi_\beta } &  \\
 \Sigma_\alpha\supsetneq
 K_\alpha \ar[ru]_{\psi_\alpha}\ \ \   &     &  \ \ \   
 \ar[lu]^{\psi_\beta} 
 K_\beta\subsetneq
 \Sigma_\beta.}\end{split}\]
The map $\phi_\alpha$ (resp. $\phi_\beta)$ cannot be onto $\Sigma_\alpha$ (resp. $\Sigma_\beta$), since it is a homeomorphism (see Lemma~\ref{include-lem} below) and there is no proper embedding of the full shift on $(M+1)$-symbols to the Dyck shift
\cite{HI05}. The next two lemmas were proved in \cite[Section~4]{Kri74}, but we include proofs here for completeness.

\begin{lemma}\label{include-lem}
For each $\gamma\in\{\alpha,\beta\}$ the following hold:
\begin{itemize}
\item[(a)] 
 $\phi_\gamma$ is a homeomorphism between $B_\gamma$ and $K_\gamma$ whose inverse is $\psi_\gamma$.
\item[(b)] 
$\phi_\gamma\circ\sigma|_{B_\gamma}=\sigma_\gamma\circ\phi_\gamma$ and $\sigma^{-1}\circ\psi_\gamma=\psi_\gamma\circ\sigma_\gamma^{-1}|_{K_\gamma}$.
\end{itemize}
\end{lemma}

\begin{proof}
It is straightforward to check that 
$\psi_\gamma\circ\phi_\gamma(\omega)=\omega$ for every $\omega\in B_\gamma$, and
$\phi_\gamma\circ\psi_\gamma(\zeta)=\zeta$
for every $\zeta\in K_\gamma$, which verifies (a).
It is also straightforward to check (b). \end{proof}

\begin{lemma}\label{gyak-lem}
The following statements hold:
\begin{itemize}
    \item[(a)] If $\nu\in M^{\rm e}(\Sigma_\alpha,\sigma_\alpha)$ and $\nu(\Sigma_{\alpha}(0;\beta))<\frac{1}{2}$ then $\nu(K_\alpha )=1$.
    \item[(b)] If $\nu\in M^{\rm e}(\Sigma_\beta,\sigma_\beta)$ and $\nu(\Sigma_{\beta}(0;\alpha))<\frac{1}{2}$ then $\nu(K_\beta )=1$.\end{itemize}
\end{lemma}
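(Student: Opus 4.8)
The plan is to translate membership in $\phi_\alpha(B_\alpha)$ into an asymptotic property of the height functions $H_i$ and then to read off that property from Birkhoff's ergodic theorem. Throughout I read $H_i$ as in \eqref{H-def} but evaluated on $\Sigma_\alpha$, letting each $\alpha_k$ contribute $+1$ and $\beta$ contribute $-1$; since $\phi_\alpha$ preserves the left/right type of every symbol, $H_i(\phi_\alpha(\omega))=H_i(\omega)$ for $\omega\in B_\alpha$. The first thing I would record is the combinatorial meaning of the definition of $B_\alpha$: for $\omega\in\Sigma_D$ the condition there says exactly that each right bracket $\omega_i=\beta_k$ is \emph{closed}, i.e. the level $H_{i+1}=H_i-1$ is already attained at some position $p<i$, which is the height-function form of ``$\omega_i$ is matched by a left bracket lying to its left.'' Thus it suffices to exhibit a $\nu$-full-measure subset of $\Sigma_\alpha$ whose points lift, via a consistent colouring of their right brackets, to Dyck sequences with every right bracket matched on the left.

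Next I would extract the probabilistic input. Let $G_\alpha=\{\zeta\in\Sigma_\alpha\colon \lim_{i\to\infty}H_i(\zeta)=\infty\text{ and }\lim_{i\to-\infty}H_i(\zeta)=-\infty\}$, the analogue in $\Sigma_\alpha$ of the set $A_\alpha$ from \eqref{3-sets}. Define $\psi\colon\Sigma_\alpha\to\{-1,+1\}$ by $\psi(\zeta)=+1$ if $\zeta_0\in\{\alpha_1,\ldots,\alpha_M\}$ and $\psi(\zeta)=-1$ if $\zeta_0=\beta$, so that $\int\psi\,{\rm d}\nu=1-2\nu(\Sigma_{\alpha,0}(\beta))>0$ by hypothesis. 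Since the $H_i$ are the Birkhoff sums of $\psi$ taken along forward and backward orbits, and $\nu$ is ergodic and $\sigma_\alpha$-invariant (hence $\sigma_\alpha^{-1}$-invariant and ergodic as well), the ergodic theorem applied in both time directions gives, for $\nu$-a.e. $\zeta$, $\tfrac1n H_n(\zeta)\to\int\psi\,{\rm d}\nu>0$ and $\tfrac1n H_{-n}(\zeta)\to-\int\psi\,{\rm d}\nu<0$. Hence $\nu(G_\alpha)=1$.

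The crux is the inclusion $G_\alpha\subseteq\phi_\alpha(B_\alpha)$. Fix $\zeta\in G_\alpha$ and a position $i$ with $\zeta_i=\beta$. Because $H_p\to-\infty$ as $p\to-\infty$ while $H_i=H_{i+1}+1>H_{i+1}$ and $H$ changes by $\pm1$ at each step, the discrete intermediate value theorem produces $p<i$ with $H_p=H_{i+1}$; so every right bracket of $\zeta$ admits a match to its left. I would then colour each $\beta$ by the index of the unique, non-crossing left bracket it is matched to, obtaining $\omega$ with $\phi_\alpha(\omega)=\zeta$, and check $\omega\in\Sigma_D$: in any subword the innermost cancellation is a matched same-colour pair $\alpha_k\beta_k$, reducing to the unit, so after all cancellations one is left with a string of unmatched $\beta$'s followed by unmatched $\alpha$'s, and ${\rm red}$ never produces $0$. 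Since $H_i(\omega)=H_i(\zeta)$, the asymptotics defining $G_\alpha$ place $\omega\in A_\alpha$, whence $\omega\in B_\alpha$ by $A_\alpha\subseteq B_\alpha$ from \eqref{include-red}, giving $\zeta\in\phi_\alpha(B_\alpha)$. Together with $\nu(G_\alpha)=1$ this proves (i).

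Statement (ii) I would obtain by the symmetric argument with $\alpha$ and $\beta$ exchanged and time reversed: the hypothesis $\nu(\Sigma_{\beta,0}(\alpha))<\tfrac12$ makes the drift of $H_i$ strictly negative, so $H_i\to-\infty$ as $i\to+\infty$, forcing every \emph{left} bracket to be closed on its right and landing the lift in $A_\beta\subseteq B_\beta$. I expect the drift computation to be entirely routine; the main obstacle is the combinatorial bookkeeping of the third step—verifying that the forced colouring of the right brackets yields a genuine point of $\Sigma_D$ (never reducing to $0$) and that it lies in $A_\alpha$—together with the preliminary task of confirming that the $H_i$-condition defining $B_\alpha$ is truly equivalent to ``every right bracket is matched on the left,'' so that height asymptotics control membership in $\phi_\alpha(B_\alpha)$. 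The strict inequality $<\tfrac12$ is indispensable: at $\tfrac12$ the drift vanishes, $H_i$ becomes recurrent, backward record lows recur, and unmatched right brackets appear with positive frequency, so the conclusion would fail.
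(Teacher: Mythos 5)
Your proof is correct, and it rests on the same two ingredients as the paper's proof: the height functions $H_{\alpha,i}$ of \eqref{H-alpha} and Birkhoff's ergodic theorem, with the hypothesis $\nu(\Sigma_{\alpha,0}(\beta))<\frac12$ entering as a nonzero drift. The organization differs, however. The paper argues on the complement: writing $\phi_\alpha(B_\alpha)$ as a height-level condition inside $\Sigma_\alpha$, it observes that any $\zeta\notin\phi_\alpha(B_\alpha)$ has a position $i$ with $\zeta_i=\beta$ and a \emph{strict majority} of $\beta$'s in every backward window $\{i-j+1,\ldots,i\}$, and a single application of Birkhoff's theorem to the inverted system $(\sigma_\alpha^{-1},\nu)$ annihilates this set. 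No lifting is performed inside the lemma; the inverse map $\psi_\alpha$ is constructed afterwards in Lemma~\ref{include-lem}, and its existence is what tacitly justifies the opening set identity for $\phi_\alpha(B_\alpha)$ (which the paper attributes to ``the definition of $B_\alpha$''). You instead prove the stronger statement that the $\Sigma_\alpha$-analogue $G_\alpha$ of $A_\alpha$ has full measure (two-sided Birkhoff) and then lift each point of $G_\alpha$ into $A_\alpha\subset B_\alpha$ via the non-crossing matching, i.e., you rebuild $\psi_\alpha$ by hand, including the verification that the coloured lift never reduces to $0$. This costs you combinatorial bookkeeping the paper defers to Lemma~\ref{include-lem}, but it buys the fact that the a.e.\ lift lands in $A_\alpha$ --- precisely the statement \eqref{measure-nu} that the paper must derive separately --- and it makes explicit the surjectivity direction of the paper's identity, so your write-up is arguably the more self-contained of the two.

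One correction to your closing remark: at $\nu(\Sigma_{\alpha,0}(\beta))=\frac12$ it is your \emph{mechanism} that fails, not the conclusion. Zero drift does destroy $\nu(G_\alpha)=1$ (the ergodic measure on the periodic orbit of $(\alpha_1\beta)^\infty$ has bounded heights), yet for that very measure every right bracket is matched on the left, so $\nu(\phi_\alpha(B_\alpha))=1$ still holds; this already refutes the claim that the conclusion fails at $\frac12$. In fact it never fails there: an unmatched right bracket at position $i$ means $H_{\alpha,p}(\zeta)>H_{\alpha,i+1}(\zeta)$ for all $p\leq i$, i.e.\ a strict backward record low of the height, and in a window of length $n$ the number of such records is bounded by the total decrease of the running backward minimum, which is $o(n)$ a.e.\ when the drift vanishes; hence unmatched right brackets have frequency zero, not positive frequency. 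So the strict inequality is needed by your argument and by the paper's, but it is not sharp for the statement. Since the lemma is only asserted for $<\frac12$, this slip is a stray remark and does not affect the validity of your proof.
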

 \begin{proof}
By the definition of $B_\alpha$ and that of $\phi_\alpha$,
we have
\[K_\alpha=\bigcap_{i=-\infty}^\infty\bigcup_{k=1}^M\left(\Sigma_{\alpha}(i;\alpha_k)\cup\left(\Sigma_{\alpha}(i;\beta)\cap\bigcup_{j=1}^\infty\{ H_{\alpha,i-j+1}=H_{\alpha,i+1}\}\right)\right),\]
and by De Morgan's laws,
\[K_\alpha^c=\bigcup_{i=-\infty}^\infty\bigcap_{k=1}^M\left(\Sigma_{\alpha}(i;\alpha_k)^c\cap\left(\Sigma_{\alpha}(i;\beta)^c\cup\bigcap_{j=1}^\infty\{ H_{\alpha,i-j+1}= H_{\alpha,i+1}\}^c\right)\right),\]
where the upper indices $c$ denote the complements in $\Sigma_\alpha$.
For each $\zeta\in \Sigma_\alpha\setminus
K_\alpha$ 
there exists $i\in\mathbb Z$ such that
$\zeta_i=\beta$ and
$ H_{\alpha,i-j+1}(\zeta)\neq H_{\alpha,i+1}(\zeta)$ for every $j\geq1$.
By induction, for every $j\geq1$ we have
\[\#\{m\in\{i-j+1,\ldots,i\}\colon \zeta_m=\beta\}>\#\{m\in\{i-j+1,\ldots,i\}\colon \zeta_m\neq\beta\}.\]
If $\nu\in M^{\rm e}(\Sigma_\alpha,\sigma_\alpha)=M^{\rm e}(\Sigma_\alpha,\sigma_\alpha^{-1})$ and $\nu(\Sigma_{\alpha}(0;\beta))<\frac{1}{2}$, then
Birkhoff's ergodic theorem applied to $(\sigma_\alpha^{-1},\nu)$ yields $\nu(K_\alpha)=1$, as required in (a). A proof of (b) is analogous. \end{proof}

\subsection{Symbolic coding}\label{code-sec}
Let $(a,b)\in\Delta$. We consider the following maximal invariant sets
\[\Lambda_a=\bigcap_{n=0}^{\infty}f_a^{-n}\left(\bigcup_{\gamma\in D}{\rm int}( \Omega_\gamma^+)\right)\quad\text{ and }\quad \Lambda_{a,b}=\bigcap_{n=-\infty}^{\infty}f_{a,b}^{-n}\left(\bigcup_{\gamma\in D}{\rm int}(\Omega_\gamma)\right).\]
Define {\it coding maps} $\pi_{a}\colon (x_u,x_c)\in \Lambda_a\mapsto
(\omega_n)_{n=0}^\infty\in D^{\mathbb N_0}$ 
by \[(x_u,x_c)\in 
\bigcap_{n=0 }^\infty f_a^{-n}({\rm int}(
\Omega_{\omega_n}^+)),\]
 and $\pi_{a,b}\colon (x_u,x_c,x_s)\in \Lambda_{a,b}\mapsto
 (\omega_n)_{n=-\infty}^\infty\in D^{\mathbb Z}$ by
  \[(x_u,x_c,x_s)\in\bigcap_{n=-\infty }^\infty f_{a,b}^{-n}({\rm int}
  (\Omega_{\omega_n})).\]
   The coding map $\pi_a$ (resp. $\pi_{a,b}$) is a semiconjugacy between $f_a|_{\Lambda_a}$ (resp. $f_{a,b}|_{\Lambda_{a,b}}$) and the left shifts acting on the coding space $\overline{ \pi_{a}(\Lambda_a)}$ (resp. $\overline{\pi_{a,b}(\Lambda_{a,b})}$).
   The coding spaces are subshifts on $2M$ symbols that are independent of $(a,b)$.

The next connection between the heterochaos baker maps and the Dyck shift underlies the proofs of the main results of this paper.

   \begin{theorem}[\cite{TY23}, Theorem~1.1]\label{TY-thm}
For any $(a,b)\in \Delta$
we have \[\overline{\pi_{a}(\Lambda_{a})}=\Sigma_{D}^+
   \ \text{ and } \
   \overline{\pi_{a,b}(\Lambda_{a,b})}=\Sigma_{D}.\]
\end{theorem}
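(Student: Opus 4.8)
The plan is to translate the center dynamics of $f_a$ into a push-down (stack) automaton whose legal runs are exactly the admissible words of the Dyck shift, and then match the two containments against this dictionary. Writing the center coordinate $x_c$ in base $M$ as $0.d_1d_2d_3\cdots$ with $d_m\in\{0,\ldots,M-1\}$, the branch on $\Omega_{\alpha_k}^+$ acts by $x_c\mapsto x_c/M+(k-1)/M$, which prepends the digit $k-1$ to the expansion, while the branch on $\Omega_{\beta_k}^+$ is defined only where $x_c\in[\frac{k-1}{M},\frac{k}{M})$ and acts by $x_c\mapsto Mx_c-(k-1)$, which requires the leading digit to equal $k-1$ and then deletes it. Thus $\alpha_k$ is an ``open bracket of type $k$'' that pushes $k-1$, and $\beta_k$ is a ``close bracket of type $k$'' that legally pops only a matching $k-1$; this is exactly the rule $\alpha_i\cdot\beta_j=\delta_{i,j}$ of the monoid in \S\ref{Dyck}. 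I would record at the outset that the function $H_i$ of \eqref{H-def} is precisely the net stack height (pushes minus pops) of the word $\omega_0\cdots\omega_{i-1}$.

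For the inclusion $\overline{\pi_a(\Lambda_a)}\subseteq\Sigma_D^+$, since $\Sigma_D^+$ is closed it suffices to show $\pi_a(\Lambda_a)\subseteq\Sigma_D^+$. Given $x\in\Lambda_a$ with itinerary $\omega$, I would argue that no subword reduces to $0$: such a reduction forces positions $m<m'$ with $\omega_m=\alpha_i$, $\omega_{m'}=\beta_{i'}$, $i\neq i'$, and with $\omega_{m+1}\cdots\omega_{m'-1}$ balanced (reducing to $1$). But ``balanced'' means the stack returns at time $m'$ to exactly the level just after the push of $\omega_m$, so the leading digit of $x_c$ at time $m'$ is the digit $i-1$ deposited by $\omega_m$; the definition of $f_a$ forces the pop $\omega_{m'}=\beta_{i'}$ to be applied only when that leading digit equals $i'-1$, whence $i=i'$, a contradiction. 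Hence every itinerary is admissible.

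For the reverse inclusion $\Sigma_D^+\subseteq\overline{\pi_a(\Lambda_a)}$ I would establish density by realizing arbitrarily long admissible prefixes. Fix $\omega\in\Sigma_D^+$ and $N\in\mathbb N$. The ``type'' sequence of $\omega_0\cdots\omega_{N-1}$ (treating every $\beta_k$ as the single full branch of $\tau_a$ on $[Ma,1]$ and $\alpha_k$ as the $k$-th branch) selects a nonempty $\tau_a$-cylinder in whose interior I pick $x_u$; since $\tau_a$ is fully branched and expanding, this pins down the $x_u$-itinerary. For $x_c$ I read off the forced digits: whenever the running height $H_n$ first reaches a new minimum $-m$ through a pop $\omega_n=\beta_k$, this pop removes the $m$-th original digit, so I set $d_m=k-1$; all remaining original digits are free (say $0$). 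This is consistent because each original digit is popped at most once, at the first visit to its depth, while the within-word pops are already matched by admissibility and constrain only pushed digits; choosing the free digits to keep the length-$N$ orbit segment off the finitely many boundary hyperplanes places the point in $\Lambda_a$ with coding agreeing with $\omega$ on the first $N$ symbols. Letting $N\to\infty$ gives $\omega\in\overline{\pi_a(\Lambda_a)}$.

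The two-sided identity $\overline{\pi_{a,b}(\Lambda_{a,b})}=\Sigma_D$ follows the same template, now using invertibility of $f_{a,b}$: the $x_u$- and $x_s$-coordinates are determined by the forward and backward itineraries through the uniformly expanding and contracting factors, while the base-$M$ digits of $x_c$ are pinned down jointly, the digits consumed in forward time being forced by future $\beta$'s as above and the remaining digits being supplied by the backward itinerary (the mirror construction, in which $\beta$ plays the role of a push and $\alpha$ of a pop), with the two-sided height functions $H_i$ of \eqref{H-def} (for $i>0$ and $i<0$) doing the bookkeeping. The main obstacle I anticipate lies precisely here: verifying that the forward-forced and backward-forced digit assignments for $x_c$ never conflict, which should be exactly what the two-sided Dyck condition guarantees, and checking that the orbits meeting the boundary hyperplanes (where admissibility holds but the coding is undefined) account for the gap that is repaired by passing to the closure. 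Once the stack/bracket dictionary of the first paragraph is in place, the matching rule handles the within-word constraints and the freedom in the unconsumed digits handles the below-the-floor pops, so the residual work is careful combinatorial bookkeeping rather than a new idea.
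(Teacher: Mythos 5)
The paper itself contains no proof of Theorem~\ref{TY-thm}: it is imported verbatim from \cite{TY23}, so your attempt can only be measured against the mechanism this paper sets up around it, and your stack dictionary is exactly that mechanism --- $\alpha_k$ prepends the base-$M$ digit $k-1$ to $x_c$, $\beta_k$ is defined only where the leading digit is $k-1$ and deletes it, and the functions $H_i$ of \eqref{H-def} are the resulting stack heights. Your forward inclusion $\pi(\Lambda)\subseteq\Sigma_D^+$ (resp.\ $\Sigma_D$) is correct as written: a subword reducing to $0$ produces a height-matched pair $\omega_m=\alpha_i$, $\omega_{m'}=\beta_{i'}$ with a word reducing to $1$ in between; since that word's height profile never dips below its starting level, the digit $i-1$ pushed at time $m$ is the leading digit at time $m'$, and the definition of the $\beta_{i'}$-branch forces $i'=i$. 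The consistency worry you flag in the two-sided case also dissolves, and more easily than you suggest: density only requires realizing finite central blocks $\omega_{-N}\cdots\omega_{N-1}$, so every forced digit of $x_c$ comes either from a matched pair inside the block (where admissibility of the block itself forces the types to agree, by the same argument as the forward inclusion) or from an unmatched push or pop consuming a distinct free digit; no conflict is possible.

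The one genuine gap is in your density step, in the clause ``choosing the free digits to keep the length-$N$ orbit segment off the finitely many boundary hyperplanes places the point in $\Lambda_a$.'' That is false as stated: by the definitions in \S\ref{code-sec}, membership in $\Lambda_a$ requires $f_a^n(x)\in\bigcup_{\gamma\in D}\mathrm{int}(\Omega_\gamma^+)$ for \emph{all} $n\geq 0$ (and $\Lambda_{a,b}$ requires this for all $n\in\mathbb Z$), not merely for $n\leq N$; moreover your suggestion to set the free digits to $0$ can genuinely fail, since once all pushed and forced digits are consumed the orbit lands on $\{x_c=0\}$, which lies outside every $\mathrm{int}(\Omega_\gamma^+)$. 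The repair is short but must be said. The set of points realizing the block is a rectangle (in dimension three, a cuboid of the form $P_{a,b}(\omega)=\bigcap_{i=-N}^{N-1}f_{a,b}^{-i}(\Omega_{\omega_i})$ as in \S\ref{pfthmc-sec}) with non-empty interior, because the $x_u$-itinerary ranges over full branches of $\tau_a$ and the $x_c$-constraints prescribe only finitely many digits. On the other hand, the complement of $\Lambda_a$ (resp.\ $\Lambda_{a,b}$) is the countable union $\bigcup_{n}f^{-n}(B)$ of iterated preimages (and, in the invertible case, images) of the set $B$ of domain boundaries, hence a countable union of affine segments (resp.\ pieces of planes); by the Baire category theorem such a union cannot cover a non-empty open set, so the cylinder meets $\Lambda_a$ (resp.\ $\Lambda_{a,b}$), and density follows. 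Alternatively one can append an explicit admissible tail and choose the free digits generically \emph{for all time}, not just up to time $N$. With either repair your argument closes, and it is then a correct proof along the lines underlying \cite{TY23}.
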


 \begin{remark}
    If $k\in\{1,\ldots,M-1\}$, then
    $f({\rm int}(\Omega_{\alpha_k}))\cap {\rm int}(\Omega_{\beta_{k+1}})=\emptyset$ and $f(\Omega_{\alpha_k})\cap \Omega_{\beta_{k+1}}\neq\emptyset$, 
   whereas the word $\alpha_k\beta_{k+1}$ is not admissible. This is the reason why the interiors have been taken in the definitions of the coding maps.\end{remark}

{\bf Convention.}
   For the rest of Section~3 we will state quite a few lemmas on the heterochaos baker maps that hold for any $(a,b)\in\Delta$. For simplicity,  we will omit the phrase `for any $(a,b)\in\Delta$' from all the statements. 

 \subsection{Preimage of the coding map}\label{str-code}
In order to clarify where the preimage of the coding map $\pi\colon\Lambda\to\Sigma_D$ is a singleton, we  
consider the set
\[A_{\alpha,\beta}=\left\{\omega\in\Sigma_D\colon \liminf_{i\to\infty}H_i(\omega)=-\infty\ \text{ or } \ \liminf_{i\to-\infty}H_i(\omega)=-\infty\right\}.\]
Note that $A_\alpha\cup A_\beta\subset A_{\alpha,\beta}$. 

 Let
 $\omega\in\Sigma_D$.
 For each $i\in\mathbb Z$ define 
      \[R_i(\omega)=\begin{cases}
      \bigcap_{j=0}^{i-1}f^{-j}(\Omega_{\omega_j})&\text{ for }  i\geq1,\\
      \bigcap_{j=-i+1}^{0}f^{-j}(\Omega_{\omega_j})&\text{ for } i\leq -1,\\
      [0,1]^3&\text{ for }i=0.\end{cases}\]
 See \textsc{Figure}~\ref{K-set}.
 Clearly we have $\pi^{-1}(\omega)\subset\bigcap_{i=-\infty }^\infty R_i(\omega)$.

\begin{lemma}[c.f. \cite{TY23} Lemma~3.1]\label{factor-prop}
 If $\omega\in A_{\alpha,\beta}$ then
$\bigcap_{i=-\infty }^\infty R_i(\omega)$ is a singleton.
If moreover $\omega\in\pi(\Lambda)$, then
 $\pi^{-1}(\omega)$ is a singleton.
 \end{lemma}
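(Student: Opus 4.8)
The plan is to determine each of the three coordinates $x_u$, $x_c$, $x_s$ of a point in $\bigcap_i K_i(\omega)$ separately, using that for $f=f_{a,b}$ each direction is governed by one factor of the skew product. Since $\tau_a$ expands by $\tfrac1a>1$ or $\tfrac1{1-Ma}>1$, the forward cylinders shrink exponentially in the $x_u$-direction, so $x_u$ is pinned down by the forward itinerary $(\omega_j)_{j\ge0}$ alone; symmetrically, the $x_s$-direction is forward-contracting (by $1-Mb$ or $b$, both in $(0,1)$), hence backward-expanding, so $x_s$ is pinned down by the past $(\omega_j)_{j<0}$ alone. Both hold for every $\omega\in\Sigma_D$, with no hypothesis needed. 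The entire content of the lemma is therefore concentrated in the center direction, and this is exactly where $\omega\in A_{\alpha,\beta}$ enters.

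For the center coordinate I would first record that the $x_c$-component of $f$ depends only on $x_c$, so the center dynamics is a genuine one-dimensional affine cocycle driven by the itinerary, with $D_{x_c}f=e^{\phi^c}$, i.e. $\tfrac1M$ on $\Omega_\alpha$ and $M$ on $\Omega_\beta$. Consequently $f^i$ restricted to the forward cylinder $K_i(\omega)$ is affine in $x_c$ with derivative
\[
D_{x_c}f^i=\exp\Big(\sum_{j=0}^{i-1}\phi^c(f^j(\cdot))\Big)=M^{-H_i(\omega)}.
\]
The set of admissible $x_c$-values in $K_i(\omega)$ is an interval (a nested intersection of preimages of intervals under increasing affine maps), and $f^i$ maps it affinely into $[0,1]$; hence its length is at most $M^{H_i(\omega)}$. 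If $\liminf_{i\to\infty}H_i(\omega)=-\infty$, then $M^{H_i(\omega)}\to0$ along a subsequence, forcing the $x_c$-extent of $\bigcap_iK_i(\omega)$ to be $0$. The case $\liminf_{i\to-\infty}H_i(\omega)=-\infty$ is identical after reversing time: going backward, $\alpha$ expands and $\beta$ contracts in $x_c$, the backward cylinders are again intervals on which the inverse iterate is affine with center-derivative $M^{-H_i(\omega)}$, and an excess of $\alpha$'s in the distant past shrinks their $x_c$-extent to $0$. Since $A_{\alpha,\beta}$ is precisely the union of these two cases, $x_c$ is uniquely determined in either one.

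Combining the three coordinate arguments shows $\bigcap_iK_i(\omega)$ contains at most one point. To see it is nonempty, I would note that the bilateral cylinders $\bigcap_{|i|\le n}K_i(\omega)$ form a nested sequence of nonempty compact boxes — nonempty because $\omega\in\Sigma_D=\overline{\pi(\Lambda)}$ by Theorem~\ref{TY-thm}, so each finite word of $\omega$ is admissible — whence the full intersection is nonempty by the finite intersection property. This gives the first assertion. For the second, recall $\pi^{-1}(\omega)\subseteq\bigcap_iK_i(\omega)$; if in addition $\omega\in\pi(\Lambda)$ then $\pi^{-1}(\omega)\neq\emptyset$, so it must equal the single point of $\bigcap_iK_i(\omega)$.

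The main obstacle is the center estimate: one must verify that the admissible $x_c$-set of each cylinder is genuinely a single interval on which the relevant iterate of $f$ acts affinely with derivative exactly $M^{-H_i(\omega)}$, so that a bound of the form $\text{length}\le M^{H_i(\omega)}$ is legitimate, and then that $\liminf H_i=-\infty$ — rather than $\lim$ — already suffices, since one only needs the extents to vanish along a subsequence. The $x_u$- and $x_s$-arguments are routine consequences of uniform expansion and contraction; the delicate bookkeeping is the interplay between the sign convention in $H_i$ and the expanding versus contracting character of the center in forward versus backward time, consistent with the inclusion $A_\alpha\cup A_\beta\subset A_{\alpha,\beta}$.
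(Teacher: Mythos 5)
Your argument is correct and is essentially the paper's own proof: the paper likewise treats the three coordinates separately, obtaining ${\rm diam}^u(K_i(\omega))\to 0$ as $i\to\infty$ from uniform expansion in $x_u$, ${\rm diam}^s(K_i(\omega))\to 0$ as $i\to-\infty$ from uniform contraction in $x_s$, and ${\rm diam}^c(K_i(\omega))\to 0$ in whichever time direction $\liminf H_i(\omega)=-\infty$ holds --- your explicit bound ${\rm diam}^c(K_i(\omega))\le M^{H_i(\omega)}$, combined with monotonicity of the nested cylinders to upgrade the subsequential decay to a full limit, is exactly the mechanism the paper leaves implicit. The one place you go beyond the paper, the finite-intersection-property argument for nonemptiness, should be phrased for the closures $\overline{K_i(\omega)}$ (the sets $K_i(\omega)$ are not closed, the domains $\Omega_\gamma$ being half-open, so a boundary point could realize the closure-intersection without lying in $\bigcap_i K_i(\omega)$); the paper sidesteps this by reducing the lemma to ${\rm diam}(K_i(\omega))\to 0$, nonemptiness in the second assertion being automatic from $\emptyset\neq\pi^{-1}(\omega)\subset\bigcap_{i=-\infty}^{\infty}K_i(\omega)$ when $\omega\in\pi(\Lambda)$.
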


\begin{figure}
\centering
\includegraphics
[height=5.5cm,width=12cm]{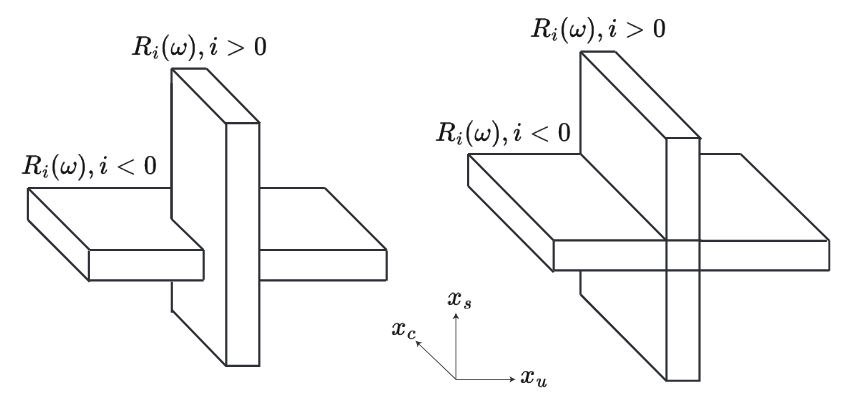}
\caption{The sets $R_i(\omega)$ in two cases: $\omega\in A_{\alpha,\beta}$ (left); 
$\omega_j\in\{\alpha_1,\ldots,\alpha_M\}$  for every $j\geq0$ and $\omega_j\in\{\beta_1,\ldots,\beta_M\}$ for every $j\leq -1$ (right).}
\label{K-set}
\end{figure}
\begin{proof} 
Suppose $\omega\in A_{\alpha,\beta}$. 
 It suffices to
show that ${\rm diam}(R_i(\omega))\to0$ as $|i|\to\infty$ where ${\rm diam}(\cdot)$ denotes the Euclidean diameter. 
For $t=u,c,s$
and two points $x=(x_u,x_c,x_s)$, $y=(y_u,y_c,y_s)$ in $[0,1]^3$,
  write
$|x-y|_t=|x_t-y_t|$.
For a subset $S$ of $[0,1]^3$ 
write ${\rm diam}^t(S)=\sup_{x,y\in S} |x-y|_t$. 
 Since the $x_u$-direction (resp. the $x_s$-direction) is uniformly expanding (resp. contracting) under the forward iteration of $f$, we have ${\rm diam}^u(R_i(\omega))\to0$ as $i\to\infty$ (resp. ${\rm diam}^s(R_i(\omega))\to0$ as $i\to-\infty$). 
Since $\omega\in A_{\alpha,\beta}$,
either $\liminf_{i\to\infty}H_i(\omega)=-\infty$
or $\liminf_{i\to-\infty}H_i(\omega)=-\infty$ holds.
We have
${\rm diam}^c(R_i(\omega))\to0$ as $i\to\infty$ in the first case, and 
${\rm diam}^c(R_i(\omega))\to0$ as $i\to-\infty$ in the second case. 
 We have verified the first assertion of Lemma~\ref{factor-prop}. The second one is obvious.
\end{proof}

\begin{remark}The proof of Lemma~\ref{factor-prop} shows that if $\pi^{-1}(\omega)$ is not a singleton then it is a segment parallel to the $x_c$-axis. This occurs, for example, if 
$\omega_j\in\{\alpha_1,\ldots,\alpha_M\}$  for every $j\geq0$ and $\omega_j\in\{\beta_1,\ldots,\beta_M\}$ for every $j\leq -1$. See \textsc{Figure}~\ref{K-set} right.
\end{remark}

  \subsection{The ergodic MMEs for the heterochaos baker maps}\label{str-mme} 
Let $\xi_\alpha$, $\xi_\beta$ denote the 
  Bernoulli measures on 
$\Sigma_{\alpha}$,
$\Sigma_{\beta}$ respectively associated with the probability vector $(\frac{1}{M+1},\ldots,\frac{1}{M+1})$.
For each $\gamma\in\{\alpha,\beta\}$, Lemma~\ref{gyak-lem} gives
$\xi_\gamma(K_\gamma)=1$, and
the map $\psi_\gamma\colon K_\gamma\to B_\gamma$ in Lemma~\ref{include-lem} is continuous.
Hence, the measures 
 \begin{equation}\label{ergmme}\nu_{\alpha}=\xi_\alpha\circ\psi_\alpha^{-1}\ \text{ and } \ \nu_\beta=\xi_\beta\circ\psi_\beta^{-1}\end{equation}
are $(\frac{1}{M+1},\ldots,\frac{1}{M+1})$-Bernoulli 
of entropy $\log(M+1)$, and 
charge any non-empty open subset of $\Sigma_D$.
From \eqref{include-red} and Lemma~\ref{trichotomy}
 it follows that \begin{equation}\label{Kri-eq}M_{\rm max}^{\rm e}(\Sigma_D,\sigma)=\{\nu_\alpha, \nu_\beta\},\end{equation}
 which proves Theorem~\ref{k-thm}.
 
A direct calculation based on \eqref{ergmme} gives
\begin{equation}\label{balance}\nu_\alpha(\Sigma_{D}(0;\alpha_k))=\nu_\beta(\Sigma_{D}(0;\beta_k))=\frac{1}{M+1}\ \text{ for  }k=1,\ldots,M.\end{equation}
The ergodicity of $\nu_\alpha$, $\nu_\beta$ and \eqref{balance} together imply
 \begin{equation}\label{measure-nu}\nu_\alpha(A_\alpha )=1\ \text{ and }\ 
 \nu_\beta(A_\beta )=1.\end{equation}
By Lemma~\ref{factor-prop} and \eqref{measure-nu}, $\pi^{-1}(\omega)$ is a singleton for $\nu_\alpha$-a.e. $\omega$ and for $\nu_\beta$-a.e. $\omega$.
 Moreover, by \cite[Lemma~1]{TY23-c} we have \[\nu_\alpha(\pi(\Lambda))=1\ \text{ and }\ \nu_\beta(\pi(\Lambda))=1.\] 
 Hence, we can define two Borel probability measures
\begin{equation}\label{mualpha}\mu_\alpha=\nu_\alpha\circ\pi\ \text{ and }\ 
 \mu_\beta=\nu_\beta\circ\pi,\end{equation}
 that are $f$-invariant of entropy $\log(M+1)$ and ergodic. Later in Section~\ref{pfthmab-sec} we will show that the ergodic MMEs of $f$ are precisely $\mu_\alpha$ and $\mu_\beta$.

\subsection{Lyapunov exponents of invariant measures}\label{lyapunov-sec}
 We would like to define Lyapunov exponents 
for $f$-invariant measures directly, but $f$ is not continuous.
To dispel technicalities we first define `Lyapunov exponents' for shift invariant measures on $\Sigma_D$, and then pull them back to define Lyapunov exponents for $f$-invariant measures.
With a slight abuse of notation, write \[M^{\rm e}(\Lambda,f)=\{\mu\in M^{\rm e}([0,1]^3,f)\colon\mu(\Lambda)=1\}.\]

     Let $(a,b)\in \Delta$.
   For $t=u,c$ define $\varphi^t\colon \Sigma_{D}\to\mathbb R$ by
   \[\begin{split}\varphi^u(\omega)&=\begin{cases}-\log a&\text{ on }  \bigcup_{k=1}^{M}\Sigma_{D}(0;\alpha_k),\\
 -\log(1-Ma)&\text{ on }  \bigcup_{k=1}^{M}\Sigma_{D}(0;\beta_k),\end{cases}\\
 \varphi^c(\omega)&=\begin{cases}-\log M&\text{ on }  \bigcup_{k=1}^{M}\Sigma_{D}(0;\alpha_k),\\
 \log M&\text{ on }  \bigcup_{k=1}^{M}\Sigma_{D}(0;\beta_k).\end{cases}\end{split}\]
 For a measure $\nu\in  M(\Sigma_D,\sigma)$ we put
$\chi^t(\nu)=\int\varphi^t{\rm d}\nu$.
We say $\omega\in \Sigma_{D}$ is {\it regular} if for each $t=u,c$
 there exists $\chi^t(\omega)\in\mathbb R$ such that
 \[\lim_{n\to\infty}\frac{1}{n} \sum_{i=0}^{n-1}\varphi^t(\sigma^i\omega)=
\lim_{n\to\infty}\frac{1}{n} \sum_{i=0}^{n-1}\varphi^t(\sigma^{-i}\omega) =\chi^t(\omega).\]
  Consider the cocycle of $2\times2$ matrices over $(\Sigma_{D},\sigma)$
induced from $\varphi^u$, $\varphi^c$.
Since these functions are bounded continuous,
 by Oseledec's theorem  \cite{Ose68},
 for each $\nu\in M(\Sigma_D,\sigma)$ 
 the set of regular points has full $\nu$-measure. 
 If $\nu$ is ergodic, then $\chi^t(\omega)=\chi^t(\nu)$ $\nu$-a.e.

  Now, for $\mu\in M^{\rm e}(\Lambda,f)$ and $t=u,c$ we put \[\chi^t(\mu)=\chi^t(\mu\circ\pi^{-1}),\] and
call the numbers
 $\chi^u(\mu)$,  $\chi^c(\mu)$ {\it unstable, central Lyapunov exponents of $\mu$} respectively.
 They
equal the
exponential growth rates of the diagonal elements of the Jacobian matrices of $f$
in the $x_u,x_c$-directions respectively along $\mu$-typical orbits. When we need to make explicit the map $f$, we write $\chi^t(f,\mu)$.
A stable Lyapunov exponent corresponding to the exponential growth rate in the $x_s$-direction can also be defined, but we do not need it.

We give three examples that can be checked by a direct calculation. As in \eqref{chia}, the central Lyapunov exponent of the Lebesgue measure with respect to $g_a$ is 
\begin{equation}\label{l-c-leb}\chi^c(g_a,{\rm Leb})=(1-2Ma)\log M.\end{equation}
From \eqref{balance}, 
the central Lyapunov exponents of the two ergodic measures in \eqref{mualpha} are given by 
\begin{equation}\label{l-c-mme}\chi^c(\mu_{\alpha})=-\frac{M-1}{M+1}\log M<0\ \text{ and } \ \chi^c(\mu_{\beta})=\frac{M-1}{M+1}\log M>0.\end{equation}

To proceed, we decompose $M^{\rm e}(\Lambda,f)$ into the following three subsets:
\[\begin{split}
     M^{\rm e}_-(\Lambda,f)&=\{\mu\in M^{\rm e}(\Lambda,f)\colon\chi^c(\mu)<0\};\\
    M^{\rm e}_+(\Lambda,f)&=\{\mu\in M^{\rm e}(\Lambda,f)\colon \chi^c(\mu)>0\};\\
     M_0^{\rm e}(\Lambda,f)&=\{\mu\in M^{\rm e}(\Lambda,f)\colon \chi^c(\mu)=0\}.
    \end{split}\]

\begin{lemma}\label{class-lem}\
\begin{itemize}
\item[(a)] $M^{\rm e}_-(\Lambda,f)\subset\{\mu\in M^{\rm e}(\Lambda,f)\colon\mu\circ \pi^{-1}(A_\alpha)=1\};$
\item[(b)] $M^{\rm e}_+(\Lambda,f)\subset\{\mu\in M^{\rm e}(\Lambda,f)\colon\mu\circ \pi^{-1}(A_\beta)=1\};$
\item[(c)] $M_0^{\rm e}(\Lambda,f)\supset\{\mu\in M^{\rm e}(\Lambda,f)\colon\mu\circ \pi^{-1}(A_0)=1\}.$
\end{itemize}
\end{lemma}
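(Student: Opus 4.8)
The plan is to convert the sign of the central Lyapunov exponent $\chi^c(\mu)$ into an almost sure drift statement for the height functions $H_i$, and then simply read off which of $A_0$, $A_\alpha$, $A_\beta$ carries full measure. Fix $\mu\in M^{\rm e}(\Lambda,f)$ and put $\nu=\mu\circ\pi^{-1}$. Since $\pi$ is a shift-equivariant measurable map defined on the full-measure set $\Lambda$, the pushforward $\nu$ lies in $M^{\rm e}(\Sigma_D,\sigma)$, and by the definitions in \S\ref{lyapunov-sec} we have $\chi^c(\mu)=\chi^c(\nu)=\int\varphi^c\,{\rm d}\nu$.

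The first step is the elementary identity linking the Birkhoff sums of $\varphi^c$ to the height functions. Comparing the definition of $\varphi^c$ with that of $H_i$ in \eqref{H-def}---both count $\alpha$-symbols against $\beta$-symbols---one obtains, for every $\omega\in\Sigma_D$,
\[\sum_{j=0}^{i-1}\varphi^c(\sigma^j\omega)=-(\log M)H_i(\omega)\ \ (i\geq1),\qquad \sum_{j=i}^{-1}\varphi^c(\sigma^j\omega)=(\log M)H_i(\omega)\ \ (i\leq-1).\]
This is pure bookkeeping, with no dynamics involved. Applying Birkhoff's ergodic theorem to $(\sigma,\nu)$ and to $(\sigma^{-1},\nu)$ (the finitely many boundary terms one discards are bounded) then gives, for $\nu$-a.e. $\omega$,
\[\lim_{i\to\infty}\frac{H_i(\omega)}{i}=-\frac{\chi^c(\nu)}{\log M}\qquad\text{and}\qquad \lim_{i\to-\infty}\frac{H_i(\omega)}{|i|}=\frac{\chi^c(\nu)}{\log M}.\]

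With these limits in hand the three inclusions follow by inspecting signs. If $\mu\in M^{\rm e}_-(\Lambda,f)$, then $\chi^c(\nu)<0$, so $H_i(\omega)\to+\infty$ as $i\to\infty$ and $H_i(\omega)\to-\infty$ as $i\to-\infty$ for $\nu$-a.e. $\omega$; by the definition of $A_\alpha$ in \eqref{3-sets} this says $\nu(A_\alpha)=1$, which is (i). The case $\chi^c(\nu)>0$ is symmetric and yields $\nu(A_\beta)=1$, giving (ii). For (iii), suppose $\mu\circ\pi^{-1}(A_0)=1$; were $\chi^c(\nu)$ nonzero, the argument above would force $\nu(A_\alpha)=1$ or $\nu(A_\beta)=1$, contradicting $\nu(A_0)=1$ because $A_0$, $A_\alpha$, $A_\beta$ are pairwise disjoint. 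Hence $\chi^c(\mu)=0$, i.e.\ $\mu\in M^{\rm e}_0(\Lambda,f)$.

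I do not expect a serious obstacle here: the entire content sits in the first identity and the correct use of Oseledec/Birkhoff in both time directions. The only point demanding care is the sign and index bookkeeping---in particular the minus sign in the forward limit and the shift of index when passing to $\sigma^{-1}$ for the backward average---together with the routine verification that $\nu=\mu\circ\pi^{-1}$ is genuinely ergodic, so that the Birkhoff limits are the constants $\mp\chi^c(\nu)/\log M$ rather than merely bounded.
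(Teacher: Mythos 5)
Your proof is correct and follows essentially the same route as the paper's: the paper's one-line proof invokes exactly the definitions of $H_i$ and $\varphi^c$ together with Birkhoff's ergodic theorem, which is what you carry out in detail, and your sign and index bookkeeping against \eqref{H-def} and \eqref{3-sets} checks out, including the identity $\sum_{j=0}^{i-1}\varphi^c(\sigma^j\omega)=-(\log M)H_i(\omega)$ and its backward counterpart. The only difference is cosmetic: by applying Birkhoff in both time directions (to $\sigma$ and to $\sigma^{-1}$) you verify the defining conditions of $A_\alpha$ and $A_\beta$ directly and handle (iii) by disjointness of the three sets, so you never need the appeal to Krieger's trichotomy (Lemma~\ref{trichotomy}) that the paper's proof cites.
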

\begin{proof}
By virtue of Lemma~\ref{trichotomy},
the assertion is a consequence of the definition of $H_i$, $\varphi^c$ and Birkhoff's ergodic theorem.
\end{proof}
We need a few lemmas on Lyapunov exponents. 
Define a homeomorphism $\iota\colon[0,1]^3\to[0,1]^3$ by
\[\iota(x_u,x_c,x_s)=(1-x_s,1-x_c,1-x_u).\]
Define $\iota_*\colon\mu\in M([0,1]^3,f_{a,b})\mapsto \mu\circ \iota^{-1}\in M([0,1]^3)$.
\begin{lemma}\label{inverse}\
\begin{itemize}
\item[(a)] The map
$\iota_*$ is an entropy preserving homeomorphism
from $M([0,1]^3,f_{a,b})$ to 
$M([0,1]^3,f_{b,a})$. 

\item[(b)]
We have $\iota_*(M_-^{\rm e}(\Lambda,f_{a,b}))=M_+^{\rm e}(\Lambda,f_{b,a})$,
$\iota_*M_+^{\rm e}(\Lambda,f_{a,b})=M_-^{\rm e}(\Lambda,f_{b,a})$ and
$\iota_*M_0^{\rm e}(\Lambda,f_{a,b})=M_0^{\rm e}(\Lambda,f_{b,a})$.\end{itemize}
\end{lemma}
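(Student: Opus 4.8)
The plan is to realize $\iota$ as a topological conjugacy between $f_{a,b}$ and the \emph{inverse} $f_{b,a}^{-1}$, and then to deduce both assertions from this single relation. First I would establish the pointwise identity
\[\iota\circ f_{a,b}=f_{b,a}^{-1}\circ\iota,\qquad\text{equivalently}\qquad f_{b,a}\circ\iota\circ f_{a,b}=\iota,\]
by a direct check on each piece of the partition $\{\Omega_\gamma\}_{\gamma\in D}$. Fixing $p=(x_u,x_c,x_s)\in\Omega_{\alpha_k}$, one computes $q:=\iota f_{a,b}(p)$ and finds that, because $\iota$ interchanges the expanding $x_u$-axis with the contracting $x_s$-axis, the point $q$ lands in the $\Omega_{\beta_{M-k+1}}$-domain \emph{of $f_{b,a}$} (note the swap $a\leftrightarrow b$ in the subdivisions, forced by this exchange of axes). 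Applying the corresponding affine branch of $f_{b,a}$ then returns exactly $\iota(p)$; the symmetric computation on $\Omega_{\beta_k}$, where $\Omega_{\beta_k}\leftrightarrow\Omega_{\alpha_{M-k+1}}$, completes the verification. The only delicate point is the discontinuity set $\bigcup_\gamma\partial\Omega_\gamma$: the identity holds off this null set, and I would check that $\iota$ carries it to the corresponding set for $f_{b,a}$, so that $\iota(\Lambda_{a,b})=\Lambda_{b,a}$ and the conjugacy is valid along the entire orbit of every point of $\Lambda_{a,b}$.

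Granting the conjugacy, statement (i) is formal. Pushforward by the homeomorphism $\iota$ is a weak*-homeomorphism of $M([0,1]^3)$; the relation above shows it sends $f_{a,b}$-invariant measures to $f_{b,a}^{-1}$-invariant, hence $f_{b,a}$-invariant, measures, and, $\iota$ being an involution, the same construction with $a$ and $b$ exchanged furnishes the inverse map. Entropy is preserved because it is a conjugacy invariant and $h(T,\nu)=h(T^{-1},\nu)$; thus $h(f_{b,a},\iota_*\mu)=h(f_{b,a}^{-1},\iota_*\mu)=h(f_{a,b},\mu)$.

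For (ii) I would track the central Lyapunov exponent. As $\iota$ is affine with $D\iota$ mapping the $x_c$-axis to itself (up to sign), conjugacy invariance gives $\chi^c(f_{b,a}^{-1},\iota_*\mu)=\chi^c(f_{a,b},\mu)$; and since the Oseledets exponents of an invertible system and of its inverse are opposite, $\chi^c(f_{b,a},\iota_*\mu)=-\chi^c(f_{b,a}^{-1},\iota_*\mu)$. Combining, $\chi^c(f_{b,a},\iota_*\mu)=-\chi^c(f_{a,b},\mu)$, so $\iota_*$ reverses the sign of the central exponent. Together with the facts that $\iota$ preserves ergodicity and that $\iota(\Lambda_{a,b})=\Lambda_{b,a}$, this sends $M^{\rm e}_-(\Lambda,f_{a,b})$ into $M^{\rm e}_+(\Lambda,f_{b,a})$, $M^{\rm e}_+$ into $M^{\rm e}_-$, and $M^{\rm e}_0$ into $M^{\rm e}_0$; the inclusions become equalities because $\iota_*$ is an involution between the two parameter regimes. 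On the symbolic side this is the bracket-reversing involution $\theta$ of $\Sigma_D$ that reverses time and swaps $\alpha_k\leftrightarrow\beta_{M-k+1}$, under which $\varphi^c\circ\theta=-\varphi^c$ and, in the notation of Lemma~\ref{trichotomy}, $A_\alpha\leftrightarrow A_\beta$ while $A_0$ is fixed.

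The main obstacle is the first step: the piecewise-affine verification, in which one must select the correct affine branch of $f_{b,a}$ at the image point and match it to the branch of $f_{a,b}$ at the source, keeping track simultaneously of the parameter swap $a\leftrightarrow b$ and the index reversal $k\leftrightarrow M-k+1$, and then confirm that the exceptional boundary set is handled so that the identity genuinely holds along whole orbits in $\Lambda$. Once this bookkeeping is complete, (i) and (ii) follow from standard conjugacy-invariance of entropy and of Lyapunov exponents.
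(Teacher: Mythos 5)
Your geometric computation is correct, and it is in substance the same symmetry the paper exploits: the identity $f_{b,a}\circ\iota\circ f_{a,b}=\iota$ does hold on the interiors of the pieces, with exactly the exchange $\Omega_{\alpha_k}\leftrightarrow\Omega_{\beta_{M-k+1}}$ you describe, it propagates along full orbits of points of $\Lambda_{a,b}$ (whose entire orbits stay in the interiors), and it yields $\iota(\Lambda_{a,b})=\Lambda_{b,a}$ together with the relation $\pi_{b,a}\circ\iota|_{\Lambda}=\iota_D\circ\pi_{a,b}$ for a time-reversing, bracket-swapping involution $\iota_D$ of $\Sigma_D$. (The paper's $\rho$ sends $\alpha_k\mapsto\beta_k$ without your index reversal $k\leftrightarrow M-k+1$; the discrepancy is immaterial, since any such involution satisfies $\varphi^c\circ\iota_D=-\varphi^c$, which is all that matters.) Your proof of (ii) — sign reversal of the central exponent under time reversal, preservation of ergodicity, and the involution trick upgrading the three inclusions to equalities — is precisely the paper's argument, carried out with the central-exponent bookkeeping made explicit rather than read off from $\varphi^c\circ\iota_D=-\varphi^c$.

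The genuine gap is in (i). There the measures range over all of $M([0,1]^3,f_{a,b})$, not only measures supported on $\Lambda$, and invariant measures can charge the boundary set where your identity fails: ``holds off a null set'' is useless here because that set is Lebesgue-null but need not be $\mu$-null. The failure on the boundary is real, not removable bookkeeping: for $p\in\Omega_{\alpha_k}$ with $x_c=0$ and $k\geq2$, the point $q=\iota f_{a,b}(p)$ has $q_c=\frac{M-k+1}{M}$, which under the half-open band conventions lies in the band of index $M-k+2$, so the wrong branch of $f_{b,a}$ applies and $f_{b,a}(q)\neq\iota(p)$. Such points support invariant measures: with $M=2$ and $a=b=\frac14$, the map $f_{\frac14,\frac14}$ has the period-two orbit $(\frac37,0,\frac67)\mapsto(\frac57,\frac12,\frac37)$, whose $\iota$-image evolves as $(\frac17,1,\frac47)\mapsto(\frac47,\frac12,\frac27)\mapsto(\frac17,0,\frac{23}{28})$ and is not a two-cycle. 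So the formal pushforward argument for invariance breaks down on such measures, as does your entropy step $h(T,\nu)=h(T^{-1},\nu)$, which presupposes invertibility mod $\nu$ while $f_{a,b}$ is up to three-to-one exactly on the boundaries. This is why the paper does not argue (i) formally: it imports the entropy-preserving bijection wholesale from \cite[Lemma~3.9]{TY23}, which is where the boundary-supported measures must be (and are) dealt with. Your argument as written does establish (i) for measures with $\mu(\Lambda)=1$ — which suffices for (ii) — but to close (i) in full you need either that citation or a separate analysis of the dynamics on $\bigcup_{\gamma\in D}\partial\Omega_\gamma$.
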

\begin{proof} By \cite[Lemma~3.9]{TY23}, $\iota_*$ is an entropy preserving bijection from $M([0,1]^3,f_{a,b})$ to 
$M([0,1]^3,f_{b,a})$. 
Since $\iota_*$ is a homeomorphism, (a) holds.

We define $\rho\colon D\to D$ by $\rho (\alpha_k)=\beta_k$ and $\rho(\beta_k)=\alpha_k$
for $k=1,\ldots,M$, and define a map $\iota_D\colon \Sigma_D\to D^\mathbb Z$
by $\iota_D((\omega_i)_i)=(\rho(\omega_{-i}))_i.$ As in \cite[Lemma~5.4]{T23} or in the proof of \cite[Lemma~3.8]{TY23}, $\iota_D$ induces a homeomorphism on $\Sigma_D$ which we still denote by $\iota_D$. Clearly we have
 $\iota(\Lambda_{a,b})=\Lambda_{b,a}$ and
 $\pi_{b,a}\circ\iota|_{\Lambda_{a,b}}=\iota_D\circ\pi_{a,b}$. 

Let $\mu\in M_-^{\rm e}(\Lambda,f_{a,b})$. We have
$\iota_*(\mu)\circ\pi_{b,a}^{-1}=\mu\circ(\iota_D\circ \pi_{a,b})^{-1}$.
The definition of $\iota_D$ implies 
$\iota_*(\mu)\in M_+^{\rm e}(\Lambda,f_{b,a})$. We have verified that
$\iota_*(M_-^{\rm e}(\Lambda,f_{a,b}))\subset M_+^{\rm e}(\Lambda,f_{b,a}).$
The same reasoning yields
$\iota_*(M_+^{\rm e}(\Lambda,f_{a,b}))\subset M_-^{\rm e}(\Lambda,f_{b,a})$ and $\iota_*(M_0^{\rm e}(\Lambda,f_{a,b}))\subset M_0^{\rm e}(\Lambda,f_{b,a}).$
Since 
$\iota_*(M^{\rm e}(\Lambda,f_{a,b}))= M^{\rm e}(\Lambda,f_{b,a})$, all the above three inclusions are actually set equalities
as required in (b).
\end{proof}

The next lemma is a version of Ruelle's inequality \cite{Rue78}.
\begin{lemma}[\cite{TY23}, Lemma~3.5]\label{ruelle-ineq}
For any $\mu\in M^{\rm e}(\Lambda,f)$ we have \[h(\mu)\leq\chi^u(\mu)+\max\{\chi^c(\mu),0\}.\]
\end{lemma}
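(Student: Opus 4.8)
The plan is to prove the Ruelle inequality $h(\mu)\leq\chi^u(\mu)+\max\{\chi^c(\mu),0\}$ by pulling the whole problem back to the symbolic setting via the coding map $\pi$, where the discontinuity of $f$ no longer obstructs clean estimates. First I would observe that $\mu\in M^{\rm e}(\Lambda,f)$ satisfies $h(\mu)=h(\mu\circ\pi^{-1})$, since $\pi$ is a semiconjugacy that is injective $\mu$-almost everywhere: by Lemma~\ref{factor-prop}, $\pi^{-1}(\omega)$ is a singleton whenever $\omega\in A_{\alpha,\beta}$, and the non-singleton fibers form segments parallel to the $x_c$-axis. The key point is that for an ergodic $\mu$ with $\chi^c(\mu)\neq 0$, Lemma~\ref{class-lem}(i),(ii) already forces $\mu\circ\pi^{-1}$ to be concentrated on $A_\alpha$ or $A_\beta$, both of which lie in $A_{\alpha,\beta}$, so $\pi$ is a measure-theoretic isomorphism and entropy is preserved exactly. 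This reduces the inequality to a statement about $\nu=\mu\circ\pi^{-1}\in M^{\rm e}(\Sigma_D,\sigma)$ with the explicit potentials $\varphi^u,\varphi^c$.

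Next I would estimate the entropy $h(\nu)$ directly from the geometry of the sets $K_i(\omega)$ defined in \eqref{H-def2}, which are the cylinders realized as subsets of $[0,1]^3$. The standard Ruelle-inequality strategy is to bound $h(\mu)$ by the exponential rate at which $f^n$ expands volume in its expanding directions. Here the expanding directions are the $x_u$-direction (always expanding, contributing $\chi^u(\mu)$) and the $x_c$-direction (expanding only on $\Omega_\beta$). Along a $\mu$-typical orbit, the rate of expansion in the $x_c$-direction is exactly $\chi^c(\mu)=\int\varphi^c\,{\rm d}\nu$; when $\chi^c(\mu)\leq 0$ the center direction contracts on average and contributes nothing to the entropy bound, whereas when $\chi^c(\mu)>0$ it contributes $\chi^c(\mu)$. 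I would make this precise by the partition-counting argument: refine the Markov partition $\{\Omega_\gamma\}$ under $f^{-1},\ldots,f^{-(n-1)}$, estimate the number of nonempty cells of diameter bounded below using the subadditivity of $\max\{\cdot,0\}$ applied to the Birkhoff sums of $\varphi^c$, and pass to the limit via the Shannon–McMillan–Breiman theorem together with the multiplicative ergodic theorem invoked in \S\ref{lyapunov-sec}.

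The cleanest route, which I expect to be the one taken, is to quote the abstract Ruelle inequality for the smooth pieces: on the interior of each $\Omega_\gamma$ the map $f$ is affine with a diagonal Jacobian whose logarithmic expansion rates in the $x_u,x_c,x_s$-directions are constant, so Oseledec regularity of $\nu$-typical $\omega$ gives well-defined Lyapunov exponents $\chi^u(\mu),\chi^c(\mu)$ and a stable exponent. Applying the general Ruelle bound that $h$ is at most the sum of the positive Lyapunov exponents then yields $h(\mu)\leq\max\{\chi^u(\mu),0\}+\max\{\chi^c(\mu),0\}$, and since $\chi^u(\mu)>0$ always (the $x_u$-direction is uniformly expanding by factor $\tfrac{1}{a}$ or $\tfrac{1}{1-Ma}$), the first maximum is just $\chi^u(\mu)$, giving the claimed inequality.

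The main obstacle will be the discontinuity of $f$ on the boundaries of the domains $\Omega_\gamma$, which is precisely why the abstract smooth Ruelle inequality does not apply off the shelf. The honest resolution is that the discontinuity set has been removed in the definition of $\Lambda$ (we intersect preimages of the \emph{interiors} ${\rm int}(\Omega_\gamma)$), so $\mu$-almost every orbit stays in the region where $f$ is a local diffeomorphism with bounded, in fact piecewise-constant, Jacobian; the set of orbits ever hitting a discontinuity has $\mu$-measure zero. I would therefore spend the bulk of the argument verifying that the boundary effects are $\mu$-null and that the finite-to-one, almost-everywhere-injective nature of $\pi$ does not inflate entropy, after which the counting estimate proceeds on $\Sigma_D$ exactly as in the topological Markov case. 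The delicate bookkeeping is keeping track of the $\max\{\chi^c(\mu),0\}$ term correctly, since the center direction switches between contraction on $\Omega_\alpha$ and expansion on $\Omega_\beta$, so the per-step contribution must be the positive part of the \emph{asymptotic average} $\chi^c(\mu)$ rather than an average of positive parts.
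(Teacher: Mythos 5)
The paper never proves this lemma: it is imported verbatim as \cite[Lemma~3.5]{TY23}, so there is no in-paper argument to compare against, and your proposal must stand on its own. It does not, for two reasons. First, your reduction $h(\mu)=h(\mu\circ\pi^{-1})$ is justified (via Lemmas~\ref{class-lem} and \ref{factor-prop}) only when $\chi^c(\mu)\neq0$; when $\chi^c(\mu)=0$ the fibers of $\pi$ can be nondegenerate segments parallel to the $x_c$-axis, and the only free inequality, $h(\mu\circ\pi^{-1})\leq h(\mu)$, points the wrong way. To close this case you would need a zero-relative-entropy statement over the symbolic factor --- e.g.\ a Ledrappier--Walters/Bowen-type bound combined with the observation that for $\nu$-typical $\omega$ with $\int\varphi^c{\rm d}\nu=0$ the running maximum $\max_{0\leq i\leq n}\sum_{j=0}^{i-1}\varphi^c(\sigma^j\omega)$ is $o(n)$, so the Bowen entropy of a.e.\ fiber vanishes --- and that machinery is stated for continuous maps on compact spaces, so the discontinuity of $f$ reappears there. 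This is not a corner case: in this paper the lemma is invoked \emph{only} for $\mu\in M_0^{\rm e}(\Lambda,f)$ (together with Lemma~\ref{zero-u}, in the proofs of Theorems~\ref{thm-a} and \ref{thm-leb}), while measures with $\chi^c\neq0$ are dispatched by Lemma~\ref{meas-entropy} without any Ruelle inequality; your sketch omits exactly the case the lemma exists for.

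Second, your ``cleanest route'' fails as stated: there is no off-the-shelf Ruelle inequality for discontinuous piecewise affine maps to quote, and your proposed repair --- verifying that boundary orbits are $\mu$-null --- is automatic (every $\mu\in M^{\rm e}(\Lambda,f)$ gives full mass to $\Lambda$, which avoids the boundaries under all iterates by construction) and does not touch the genuine obstruction, namely that cutting can generate entropy beyond derivative growth; this is precisely why Ruelle's proof does not transfer. After the symbolic reduction the real content is the inequality $h(\nu)\leq\int\varphi^u{\rm d}\nu+\max\{\int\varphi^c{\rm d}\nu,0\}$ for ergodic $\nu$ on $\Sigma_D$, and ``exactly as in the topological Markov case'' is not right: $\Sigma_D$ is not Markov (\S\ref{Dyck}), and the count of admissible $n$-words with $\beta$-frequency $q$ is roughly $\binom{n}{qn}M^{(1-q)n}$ times $M^{(2q-1)^{+}n+o(n)}$ for the labels of unmatched right brackets. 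Beating this by $e^{n\chi^u}=a^{-(1-q)n}(1-Ma)^{-qn}$ requires the cross-entropy inequality $(1-q)\log\frac{1}{Ma}+q\log\frac{1}{1-Ma}\geq -q\log q-(1-q)\log(1-q)$, which holds only because the branch widths $Ma$ and $1-Ma$ sum to $1$, with equality at $q=1-Ma$ (matching the sharpness visible in Theorem~\ref{thm-leb}). Your sketch never identifies this balancing step, and ``number of nonempty cells of diameter bounded below'' is not the right quantity anyway --- an upper entropy bound needs a Katok/Shannon--McMillan--Breiman count of cylinders of near-maximal measure with Birkhoff averages pinned near $(\chi^u,\chi^c)$, not a count of all large cells.
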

The next lemma provides an upper bound on the unstable Lyapunov exponent of ergodic measures whose central Lyapunov exponent is $0$. 
 \begin{lemma}\label{zero-u}
For any $\mu\in M_0^{\rm e}(\Lambda,f)$ we have \[\chi^u(\mu)\leq-\log\sqrt{a(1-Ma)}.\]
\end{lemma}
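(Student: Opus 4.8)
The plan is to transport the whole question to the symbolic side via the coding map and reduce it to a one‑parameter computation. First I would set $\nu=\mu\circ\pi^{-1}$. Since $\pi$ semiconjugates $f|_\Lambda$ to the shift $\sigma$ (Theorem~\ref{TY-thm}) and $\mu(\Lambda)=1$ with $\mu$ ergodic, the pushforward $\nu$ is an ergodic $\sigma$-invariant Borel probability measure on $\Sigma_D$, and by the very definition of the Lyapunov exponents we have $\chi^t(\mu)=\int\varphi^t\,{\rm d}\nu$ for $t=u,c$.

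Next I would introduce the single frequency $p=\nu\bigl(\bigcup_{k=1}^M\Sigma_{D,0}(\alpha_k)\bigr)$, the $\nu$-measure of sequences whose zeroth symbol is a left bracket, so that $\nu\bigl(\bigcup_{k=1}^M\Sigma_{D,0}(\beta_k)\bigr)=1-p$. Because $\varphi^c$ takes only the two values $-\log M$ and $\log M$ according as $\omega_0$ is a left or a right bracket, integrating gives $\chi^c(\mu)=(1-2p)\log M$. The hypothesis $\mu\in M_0^{\rm e}(\Lambda,f)$ means $\chi^c(\mu)=0$, and since $M\ge 2$ forces $\log M>0$, this pins down $p=\tfrac12$.

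Finally I would compute $\chi^u$. Since $\varphi^u$ equals $-\log a$ on the left‑bracket cylinders and $-\log(1-Ma)$ on the right‑bracket cylinders, integration gives $\chi^u(\mu)=-p\log a-(1-p)\log(1-Ma)$. Substituting $p=\tfrac12$ produces $\chi^u(\mu)=-\tfrac12\log\bigl(a(1-Ma)\bigr)=-\log\sqrt{a(1-Ma)}$, which is the asserted bound (in fact with equality).

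The computation is short; the only point that needs care is the passage to the symbolic model. I would make sure $\nu$ is genuinely ergodic and shift invariant — this is exactly where the semiconjugacy property of $\pi$ and Theorem~\ref{TY-thm} are used — and that the two locally constant functions $\varphi^u,\varphi^c$ depend only on whether the zeroth coordinate is a left or right bracket, so that both integrals collapse to the single frequency $p$. Once this is in place there is no real obstacle: the inequality in the statement is actually an equality, because the constraint $\chi^c(\mu)=0$ determines $p=\tfrac12$ with no remaining freedom, so no convexity or AM--GM argument is needed.
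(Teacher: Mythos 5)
Your proof is correct, and in fact it streamlines the paper's argument. The paper reaches the same conclusion by a pointwise route: it applies Birkhoff's ergodic theorem to $\nu=\mu\circ\pi^{-1}$ to produce a single typical sequence $\omega\in\pi(\Lambda)$ along which the time averages of $\varphi^c$ and $\varphi^u$ converge to $0$ and $\chi^u(\mu)$ respectively, deduces from the first convergence that the asymptotic frequency of left-bracket symbols along $\omega$ is $\tfrac12$, and then bounds the Birkhoff sums of $\varphi^u$ by $\left(\tfrac12+\varepsilon\right)\left(\log\tfrac{1}{a}+\log\tfrac{1}{1-Ma}\right)$ (using that both logarithms are positive, since $a$ and $1-Ma$ lie in $(0,1)$), finishing with $\varepsilon\to0$. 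You instead exploit the fact that the paper \emph{defines} $\chi^t(\mu)=\int\varphi^t\,{\rm d}\nu$, so the whole lemma collapses to integrating two functions of the zeroth coordinate against $\nu$: the constraint $\chi^c(\mu)=(1-2p)\log M=0$ pins $p=\tfrac12$, and substitution gives $\chi^u(\mu)=-\log\sqrt{a(1-Ma)}$ exactly. This buys you two things: you bypass Birkhoff's theorem and the $\varepsilon$-bookkeeping entirely (only shift invariance of $\nu$, which you correctly derive from the semiconjugacy, is needed for the definition to apply --- ergodicity is not actually used in your computation), and you observe that the stated inequality is in fact an equality for every $\mu\in M_0^{\rm e}(\Lambda,f)$, which is slightly sharper than what the paper records. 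The underlying idea --- that vanishing central exponent forces equal frequency of $\alpha$- and $\beta$-symbols, making $\chi^u$ the arithmetic mean of $-\log a$ and $-\log(1-Ma)$ --- is the same in both proofs.
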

\begin{proof}
From Birkhoff's ergodic theorem
 applied to $\mu\circ\pi^{-1}$,
 there exists $\omega\in\pi(\Lambda)$ such that
\begin{equation}\label{lambda-ua}
\lim_{n\to\infty}\frac{1}{n}\sum_{i=0}^{n-1}\varphi^c(\sigma^i\omega)=0\ \text{ and }\ \lim_{n\to\infty}\frac{1}{n}\sum_{i=0}^{n-1}\varphi^u(\sigma^i\omega)=\chi^u(\mu).\end{equation}
The first equality in \eqref{lambda-ua} is equivalent to
\[\lim_{n\to\infty}\frac{1}{n}\#\{i\in\{0,\ldots,n-1\}\colon \omega_i\in\{\alpha_1,\ldots,\alpha_M\}\}=\frac{1}{2}.\]
Hence,
 for any $\varepsilon>0$ there exists $n(\varepsilon)\geq1$ such that for every $n\geq n(\varepsilon)$ we have
\[\begin{split}\frac{1}{n}\sum_{i=0}^{n-1}\varphi^u(\sigma^i\omega)&\leq\left(\frac{1}{2}+\varepsilon\right)\left(\log\frac{1}{a}+\log\frac{1}{1-Ma}\right).\end{split}\] 
 Letting $n\to\infty$ in this inequality, combining the result with the first equality in \eqref{lambda-ua} and finally decreasing $\varepsilon$ to $0$ we obtain the desired inequality.
    \end{proof}

To conclude that $\mu_\alpha$, $\mu_\beta$ are the only ergodic MMEs for $f$, we will need the next lemma.
\begin{lemma}\label{meas-entropy}\

\begin{itemize}

\item[(a)] For any 
$\mu\in(M_-^{\rm e}(\Lambda,f)\cup (M_+^{\rm e}(\Lambda,f))\setminus\{\mu_{\alpha},\mu_\beta\}$
we have $h(\mu)<\log(M+1)$.

\item[(b)] For any $\mu\in M^{\rm e}([0,1]^3,f)\setminus M^{\rm e}(\Lambda,f)$ we have $h(\mu)<\log(M+1)$.\end{itemize}
\end{lemma}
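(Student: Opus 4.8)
The plan is to compute $h(\mu)$ from the symbolic coding and then invoke Krieger's dichotomy \eqref{Kri-eq}; the two parts differ only in how faithful the coding is.

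For (i), put $\nu=\mu\circ\pi^{-1}\in M^{\rm e}(\Sigma_D,\sigma)$. By Lemma~\ref{class-lem}(i)--(ii) we have $\nu(A_\alpha)=1$ or $\nu(A_\beta)=1$, and since $A_\alpha\cup A_\beta\subset A_{\alpha,\beta}$, Lemma~\ref{factor-prop} shows that $\pi^{-1}(\omega)$ is a singleton for $\nu$-a.e.\ $\omega$; moreover $\nu(\pi(\Lambda))\ge\mu(\Lambda)=1$. Hence $\pi$ is a measurable isomorphism between $(\Lambda,f,\mu)$ and $(\Sigma_D,\sigma,\nu)$ modulo null sets, so $h(\mu)=h(\nu)$. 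Next I would verify $\nu\notin\{\nu_\alpha,\nu_\beta\}$: if, say, $\nu=\nu_\alpha$, then because $\mu_\alpha=\nu_\alpha\circ\pi$ by \eqref{mualpha} and the coding is a.e.\ injective on $A_{\alpha,\beta}$, the measures $\mu$ and $\mu_\alpha$ would both push forward to $\nu_\alpha$ and both live where $\pi$ is invertible, forcing $\mu=\mu_\alpha$, contrary to hypothesis. By \eqref{Kri-eq} the only ergodic MMEs of $(\Sigma_D,\sigma)$ are $\nu_\alpha,\nu_\beta$, each of entropy $\log(M+1)$; therefore $h(\mu)=h(\nu)<\log(M+1)$.

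For (ii), I would first use that $\Lambda$ is $f$-invariant, so ergodicity forces $\mu(\Lambda)=0$; writing $\partial'=[0,1]^3\setminus\bigcup_{\gamma\in D}{\rm int}(\Omega_\gamma)$ and $\Lambda^{\rm c}=\bigcup_{n\in\mathbb Z}f^{-n}(\partial')$, invariance then gives $\mu(\partial')>0$. The essential preliminary reduction is that $\mu$ carries no mass on the unstable grid: since $f^{-1}\{x_u=0\}=\{x_u=0\}\sqcup\{x_u\in\{a,\ldots,Ma\}\}$ while $\{x_u=0\}$ is forward invariant, invariance yields $\mu(\{x_u\in\{a,\ldots,Ma\}\})=0$, and the involution $\iota$ of Lemma~\ref{inverse} gives the same for the $x_s$-grid. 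Consequently the positive mass on $\partial'$ sits either on the orbit of the invariant faces $\{x_u\in\{0,1\}\}$, $\{x_s\in\{0,1\}\}$, or on the orbit of the central boundary $\{x_c\in\tfrac1M\mathbb Z\}\cup\{x_c\in\{0,1\}\}$. In the first situation the $x_u$- (or $x_s$-) marginal is carried by points that are pre-fixed for $\tau_a$, whence $h(\tau_a,\mu_u)=0$; the Abramov--Rokhlin formula over the $\tau_a$-factor, together with the fact that only the center expands in the fibre, then gives $h(\mu)\le\mu(\Omega_\beta)\log M\le\log M<\log(M+1)$, so this situation is harmless.

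The remaining case, where $\mu$ charges the central boundary, is the main obstacle, because that boundary is not forward invariant: a point leaving it wanders through the full cube, so the orbit is not confined to a hyperplane and the crude fibre bound no longer suffices. Here I would return to the itinerary $\Pi_*\mu$ with respect to $\{\Omega_\gamma\}$ and argue that being forced onto the central grid confines $\Pi_*\mu$ to a proper subshift of $D^{\mathbb Z}$ — one that forbids a cylinder, reflecting the non-admissible boundary transitions such as $\alpha_i\beta_{i+1}$ recorded after Theorem~\ref{TY-thm} — while the central coordinate becomes asymptotically determined by the itinerary. The delicate point, exactly the one absent from (i), is controlling the central-fibre contribution when the coding falls on the balanced set $A_0$, where $\pi$ fails to be injective and the $x_c$-fibres are genuine segments; I expect to close this by combining the vanishing of the central exponent there (cf.\ Lemma~\ref{zero-u}) with a word-counting estimate showing that the admissible itineraries on the central-grid orbit grow strictly slower than $(M+1)^n$. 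Since forbidding a cylinder strictly lowers the topological entropy of the transitive Dyck shift below $\log(M+1)$, this yields $h(\mu)<\log(M+1)$, and the identification in part (i) of $\mu_\alpha,\mu_\beta$ as the only measures realizing $\log(M+1)$ — both supported on $\Lambda$ — confirms that no measure off $\Lambda$ is maximal.
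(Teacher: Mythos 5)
Your part~(i) is correct and is essentially the paper's own argument: Lemma~\ref{class-lem}(i)(ii) places $\nu=\mu\circ\pi^{-1}$ on $A_\alpha$ or $A_\beta$, Lemma~\ref{factor-prop} makes $\pi$ injective modulo $\mu$ so that $h(\mu)=h(\nu)$, and \eqref{Kri-eq} together with the observation that $\nu=\nu_\gamma$ would force $\mu=\mu_\gamma$ yields the strict inequality.

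Part~(ii), however, has a genuine gap, and you flag it yourself: the central-boundary case is never closed. The proposed mechanism --- that the itinerary measure is ``confined to a proper subshift of $D^{\mathbb Z}$ forbidding a cylinder'' and that this drops entropy below $\log(M+1)$ --- does not work even in outline, for two reasons. First, itineraries of boundary orbits need not lie in $\Sigma_D$ at all: the remark after Theorem~\ref{TY-thm} records that $f(\Omega_{\alpha_i})\cap\Omega_{\beta_{i+1}}\neq\emptyset$ even though $\alpha_i\beta_{i+1}$ is not Dyck-admissible, so the boundary dynamics realizes words \emph{outside} the Dyck language, and your itinerary subshift is not a subsystem of $\Sigma_D$ to which a forbidden-word argument could be applied. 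Second, forbidding cylinders of $D^{\mathbb Z}$ proves nothing by itself, since $\Sigma_D$ forbids infinitely many cylinders and still has topological entropy exactly $\log(M+1)$; you would need to show that the actual subshift of boundary itineraries has entropy strictly below $\log(M+1)$, and no such estimate is supplied --- ``I expect to close this by\dots'' is precisely where the proof stops. A secondary soft spot: your invariant-face bound invokes a fibred Ruelle-type inequality off $\Lambda$, whereas Lemma~\ref{ruelle-ineq} is defined through the coding and applies only to measures on $\Lambda$; that case is at least plausible (the face $\{x_u=1\}$ really carries ergodic measures of entropy $\log M$), but it too would need proof.

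For comparison, the paper sidesteps the whole case analysis: it extends $\pi$ to a Borel map $\tilde\pi\colon A_\infty\cup\Lambda\to\Sigma_D$ that is equivariant and \emph{injective} on $A_\infty$ (the union of supports of ergodic measures with $\mu(\Lambda)=0$), so $h(\mu)=h(\mu\circ\tilde\pi^{-1})$ for every such $\mu$. Distinctness of $\mu\circ\tilde\pi^{-1}$ from $\nu_\alpha,\nu_\beta$ then comes for free from supports: for $\nu_\alpha$-a.e.\ $\omega$ one has $\tilde\pi^{-1}(\omega)=\pi^{-1}(\omega)$, a singleton inside $\Lambda$, while $\mu(\Lambda)=0$; Krieger's theorem (Theorem~\ref{k-thm}) then forces $h(\mu)<\log(M+1)$. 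In effect, the injective-extension trick substitutes Krieger's uniqueness for exactly the entropy estimate on boundary orbits that your geometric approach leaves unproved; to salvage your route you would have to establish that estimate directly.
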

\begin{proof}
Item (a) follows from combining \eqref{Kri-eq}, Lemma~\ref{factor-prop} and Lemma~\ref{class-lem}(a), (b).

To prove (b), let \[A_\infty=\bigcup\{{\rm supp}(\mu)\colon\mu\in M^{\rm e}([0,1]^3,f),\mu(\Lambda)=0\},\]
where ${\rm supp}(\mu)$ denotes the smallest closed subset of $[0,1]^3$ with $\mu$-measure $1$.
It is not hard to show that the coding map $\pi\colon\Lambda\to\Sigma_D$ can be extended to a Borel map
 $\tilde\pi\colon A_\infty\cup\Lambda\to\Sigma_D$
such that $\tilde\pi\circ f|_{A_\infty}=\sigma\circ \tilde\pi$ and $\tilde\pi|_{A_\infty}$ is injective.
Let $\mu\in M^{\rm e}([0,1]^3,f)\setminus M^{\rm e}(\Lambda,f)$. 
Since $\mu(A_\infty)=1$,
the measure
$\mu\circ\tilde\pi^{-1}$ is shift invariant and $h(\mu)=h(\mu\circ\tilde\pi^{-1})$.
 Since
$\pi^{-1}(\omega)\cup\tilde\pi^{-1}(\omega)\subset\bigcap_{i=-\infty }^\infty R_i(\omega)$ holds for every $\omega\in\Sigma_D$,
Lemma~\ref{factor-prop} and $\nu_\alpha(\pi(\Lambda))=1$ together imply that 
for $\nu_\alpha$-a.e. $\omega\in\Sigma_D$,
$\tilde\pi^{-1}(\omega)=\pi^{-1}(\omega)$ holds and these sets are singletons.
Since $\mu_\alpha(\Lambda)=1$ and $\mu(\Lambda)=0$
we obtain $\mu\circ\tilde\pi^{-1}\neq \nu_\alpha$.
The same argument yields
$\mu\circ\tilde\pi^{-1}\neq \nu_\beta$, and thus
 Theorem~\ref{k-thm} yields
 $h(\mu)<\log (M+1)$ as required in (b).
 \end{proof}

\subsection{Correspondence of invariant measures}\label{projection-sec}
The next lemma clarifies a correspondence between invariant measures of $f_a$ and that of $f_{a,b}$. Recall that $p\colon[0,1]^3\to[0,1]^2$ denotes the canonical projection.
Define $p_*\colon M([0,1]^3)\to M([0,1]^2)$ by $p_*(\mu)=\mu\circ p^{-1}.$
For simplicity we denote $p_*(\mu)$ by $\mu^+$.

\begin{lemma}\label{meas-proj}\
\begin{itemize}

\item[(a)] For any $\mu\in M([0,1]^3,f_{a,b})$ we have
 $h(f_{a,b},\mu)=h(f_a,\mu^+)$. Moreover,
 $\mu^+$ is ergodic if and only if $\mu$ ergodic.
\item[(b)] 
For any $\mu\in M([0,1]^2,f_a)$ there exists a unique element $\hat\mu$ of $M([0,1]^3,f_{a,b})$
such that $(\hat\mu)^+=\mu$.
\end{itemize}
\end{lemma}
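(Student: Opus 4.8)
The plan is to read $f_{a,b}$ as a skew product over $f_a$ via the projection $p$, and to exploit that the extra $x_s$-coordinate is uniformly contracted. Concretely, $p\circ f_{a,b}=f_a\circ p$, so $p$ is a semiconjugacy (factor map) and $\mu^+=\mu\circ p^{-1}$ lies in $M([0,1]^2,f_a)$ whenever $\mu\in M([0,1]^3,f_{a,b})$. For a point $(x_u,x_c,x_s)$ the symbol $\gamma\in D$ of the domain $\Omega_\gamma$ containing it depends only on $(x_u,x_c)$, and on $\Omega_\gamma$ the map acts on the fiber by an affine map $\ell_\gamma$ in $x_s$ with slope $1-Mb$ (on $\Omega_\alpha$) or $b$ (on $\Omega_\beta$); both are $<1$, so each fiber $\{z\}\times[0,1]$ is mapped into the fiber over $f_a(z)$ by a uniform contraction. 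Moreover the $M+1$ distinct images $\ell_\alpha([0,1]),\ell_{\beta_1}([0,1]),\ldots,\ell_{\beta_M}([0,1])$ are pairwise disjoint and tile $[0,1]$ (one interval of length $1-Mb$ and $M$ intervals of length $b$). This structural fact drives everything: it exhibits $f_{a,b}$ as a concrete model of the natural extension of $f_a$, with $x_s$ encoding the backward itinerary through these contractions.

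I would prove (ii) first. For existence, form the natural extension $(\hat Y,\hat f,\hat\mu_0)$ of $(f_a,\mu)$, whose invariant measure $\hat\mu_0$ projecting to $\mu$ is unique by standard inverse-limit theory, and push it forward under the measurable conjugacy $\hat Y\to[0,1]^3$ that records the present point together with the $x_s$-value coded by the past; alternatively, build $\hat\mu$ by a Krylov--Bogolyubov argument carried out on the symbolic model $\Sigma_D$, where the shift is continuous, and transport it back by Theorem~\ref{TY-thm}. For uniqueness, let $\hat\mu$ be any invariant lift and disintegrate $\hat\mu=\int\mu_z\,{\rm d}\mu(z)$ along $p$, with $\mu_z$ a probability on the fiber $\{z\}\times[0,1]$. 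Iterating the invariance relation $n$ times and using that $f_{a,b}$ contracts fibers by a factor $\le(\max\{1-Mb,b\})^n$, one checks that $\mu_z$ assigns to each depth-$n$ cylinder interval $\ell_{\gamma_1}\circ\cdots\circ\ell_{\gamma_n}([0,1])$ (these are disjoint and of diameter tending to $0$) exactly the $\mu$-conditional weight of the corresponding length-$n$ backward branch of $f_a$. Since those branch weights are determined by $\mu$ alone, the whole disintegration, hence $\hat\mu$, is unique.

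For (i), the entropy equality splits into two inequalities. The factor map $p$ gives $h(f_{a,b},\mu)\ge h(f_a,\mu^+)$ directly, and the pushforward of an ergodic measure under a factor map is ergodic, so $\mu$ ergodic implies $\mu^+$ ergodic. For the reverse entropy inequality I would invoke the Abramov--Rokhlin formula $h(f_{a,b},\mu)=h(f_a,\mu^+)+h_\mu(f_{a,b}\mid p^{-1}\mathcal B)$ and observe that the relative term vanishes: the fibers of $p$ carry a uniformly contracting one-dimensional dynamics, whose relative topological entropy is $0$ (equivalently, the only fiberwise Lyapunov exponent is negative, so a relative Ruelle inequality forces the relative entropy to be $\le 0$). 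This yields $h(f_{a,b},\mu)=h(f_a,\mu^+)$. Finally, the remaining implication ``$\mu^+$ ergodic $\Rightarrow\mu$ ergodic'' follows from (ii): writing the ergodic decomposition $\mu=\int\mu_\omega\,{\rm d}P(\omega)$ and projecting gives $\mu^+=\int\mu_\omega^+\,{\rm d}P$ with each $\mu_\omega^+$ ergodic, so ergodicity of $\mu^+$ forces $\mu_\omega^+=\mu^+$ for $P$-a.e.\ $\omega$, whence the uniqueness in (ii) gives $\mu_\omega=\widehat{\mu^+}$ for $P$-a.e.\ $\omega$ and $\mu=\widehat{\mu^+}$ is ergodic.

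The main obstacle is the uniqueness half of (ii): making the disintegration-plus-contraction argument rigorous, i.e. controlling the conditional fiber measures through the iterated invariance relation and identifying their cylinder masses with the backward branch weights of $\mu$, while dealing with the nuisance that $f_{a,b}$ is discontinuous on the boundaries of the $\Omega_\gamma$. I would sidestep the latter by transferring the argument to the symbolic model through Theorem~\ref{TY-thm} and Lemma~\ref{factor-prop}.
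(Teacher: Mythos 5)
Your proposal is correct, but it takes a genuinely different route from the paper's proof. For the entropy identity in (i), the paper does not go through the Abramov--Rokhlin formula: it takes a finite generator $\mathcal Q$ of $f_a$ (Krieger's generator theorem), observes that by the uniform contraction in the $x_s$-direction the lifted partition $\hat{\mathcal Q}=\{(Q\times[0,1])\cap R\colon Q\in\mathcal Q\}$ is a generator for $f_{a,b}$ restricted to a full-measure set $R$ on which it is invertible, uses the exact identity $\mu(\hat{\mathcal Q}_n(x))=\mu^+(\mathcal Q_n(x))$ between two-sided and one-sided cylinder masses, and concludes via Shannon--McMillan--Breiman. For (ii), instead of your natural-extension/disintegration scheme, the paper adapts Bowen: existence comes from the limit $\hat\mu(\phi)=\lim_{n\to\infty}\mu((\phi\circ f^n)^*)$ with $\phi^*(x)=\min\{\phi(y)\colon y\in p^{-1}(x)\}$ (Cauchy because ${\rm var}_n\phi\to0$, then Riesz representation), and uniqueness from a coboundary trick: for a Lipschitz $\varphi$ separating $\mu$ from $\nu$, the function $\psi=\varphi+u\circ f-u$ with $u(x)=\sum_{j\geq0}(\varphi(f^j(x))-\varphi(f^jr(x)))$, $r(x)=(x_u,x_c,1)$, depends only on $(x_u,x_c)$, forcing $\mu^+\neq\nu^+$. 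Both arguments run on the structural engine you correctly identified---uniform fiber contraction with the branch determined by the base point---but the paper's functional-analytic version quietly sidesteps the two spots where your plan needs extra care: (a) Ledrappier--Walters-type relative variational principles cannot be cited verbatim for the discontinuous $f_{a,b}$, so your ``relative entropy is zero'' step should be done by hand (it does work: on a single fiber the branch sequence is constant, so the $n$-fold join of a horizontal-slab partition restricted to a fiber has only $O(n)$ atoms, giving zero fiber entropy, after which the purely measurable Abramov--Rokhlin formula for skew products applies); and (b) your alternative symbolic existence route via Theorem~\ref{TY-thm} is delicate for measures whose symbolic preimages are segments, since $\pi^{-1}(\omega)$ need not be a singleton off $A_{\alpha,\beta}$ (cf.\ Lemma~\ref{factor-prop}), though your primary construction is immune to this because the $x_s$-coordinate is always uniquely pinned down by the backward branch itinerary. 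One point in your favor: proving (ii) before the ergodicity assertion in (i) makes explicit a dependency the paper leaves implicit---the direction ``$\mu^+$ ergodic $\Rightarrow$ $\mu$ ergodic,'' read off from the ergodic decomposition $\eta^+=\eta\circ(p_*)^{-1}$, really does require the injectivity of $p_*$ on invariant measures, i.e., the uniqueness in (ii).
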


\begin{proof}
We start with introducing the standard notation in \cite{Wal82}.
Let $X$ be a measurable space and let $T\colon X\to X$ be a measurable map. 
We assume all partitions of $X$ to be finite partitions into measurable sets.
For two partitions $\mathcal P$, $\mathcal Q$ of $X$, let $\mathcal P\vee\mathcal Q$ denote their join: $\mathcal P\vee\mathcal Q=\{P\cap Q\colon P\in\mathcal P,\ Q\in\mathcal Q\}$.
The join of partitions $\mathcal P$, $T^{-1}\mathcal P,\ldots,T^{-n+1}\mathcal P$ $(n\geq1)$ 
is denoted by $\bigvee _{i=0}^{n-1} T^{-i}\mathcal P$.
If $T$ has a measurable inverse, we define $\bigvee _{i=-n+1}^{n-1} T^{-i}\mathcal P$ analogously.

Let $(a,b)\in\Delta$ and
let $\mu\in M([0,1]^3,f_{a,b})$. It is easy to see that there exists a Borel subset $R$ of $[0,1]^3$ such that the restriction of $f_{a,b}$ to $R$ has a measurable inverse and $\mu(R)=1$.
Take a finite generator $\mathcal Q$ of $f_a$ \cite{Kri70}, which is a finite partition of $[0,1]^2$.
From the uniform contraction of $f_{a,b}$ in the $x_s$-direction it follows that $\hat{\mathcal Q}=\{(Q\times[0,1])\cap R\colon Q\in\mathcal Q\}$ is a generator of $f_{a,b}|_R$.
For $x\in R$ and $n\geq1$
let $\mathcal Q_n(x)$ denote the element of
$\bigvee _{i=0}^{n-1} f_a^{-i}\mathcal Q$ that contains $p(x)$, and 
let $\hat{\mathcal Q}_n(x)$ denote the element of
$\bigvee _{i=-n+1}^{n-1} (f_{a,b}|_R)^{-i}\hat{\mathcal Q}$ that contains $x$.
We have
$\mu(\hat{\mathcal Q}_n(x))=\mu^+(\mathcal Q_n(x)).$
By Shannon-McMillan-Breiman's theorem,
$\lim_{n\to\infty}\frac{1}{n}\log\mu(\hat{\mathcal Q}_n(x))$ exists for $\mu$-a.e. $x\in R$ and we obtain
\[\begin{split}h(\mu)&=-\int\lim_{n\to\infty}\frac{1}{n}\log\mu(\hat{\mathcal Q}_n(x)){\rm d}\mu(x)\\
&=-\int
\lim_{n\to\infty}\frac{1}{n}\log\mu^+(\mathcal Q_n(x)){\rm d}\mu(x)=h(\mu^+),\end{split}\]
as required in (a).

Since $f_a\circ p=p\circ f_{a,b}$,
 $p_*$ maps
$M^{\rm e}([0,1]^3,f_{a,b})$ to
$M^{\rm e}([0,1]^2,f_{a})$.
For each $\mu\in M([0,1]^3,f_{a,b})$
let $\mu=\int_{M^{\rm e}([0,1]^3,f_{a,b})} \nu{\rm d}\eta(\nu)$ denote its ergodic decomposition, where $\eta$
is a Borel probability measure on $M^{\rm e}([0,1]^3,f_{a,b})$. Then
$\mu^+=\int_{M^{\rm e}([0,1]^2,f_{a})} \nu{\rm d}\eta^+(\nu)$, $\eta^+=\eta\circ(p_*|_{M^{\rm e}([0,1]^3,f_{a,b})})^{-1}$  is the ergodic decomposition of $\mu^+$. So the last assertion of (a) holds.

In what follows we write $f$ for $f_{a,b}$, and adapt the argument in the proof of \cite[Lemma~1.13]{Bow75}. 
For $d\in\{2,3\}$ let $C([0,1]^d)$ denote the space of real-valued continuous functions on $[0,1]^d$ endowed with the supremum norm. 
For $\phi\in C([0,1]^3)$
and $n\geq1$ define
\[{\rm var}_n\phi=\sup_{x\in[0,1]^2}\sup_{y,z\in f^n(p^{-1}(x))}|\phi(y)-\phi(z)| .\]
Since the $x_s$-direction is uniformly contracting under the forward iteration of $f$ and $\phi$ is uniformly continuous, we have ${\rm var}_n\phi\to0$ as $n\to\infty$.
Define $\phi^*\in C([0,1]^2)$ by
\[\phi^*(x)=\min\{\phi(y)\colon y\in p^{-1}(x)\}.\]
For $m,n\geq 0$ and $x\in[0,1]^3$ we have
\[|(\phi\circ f^n)^*\circ f^m(x)-(\phi\circ f^{n+m})^*(x)|\leq {\rm var}_n\phi,\]
and hence
\[\begin{split}\left|\mu((\phi\circ f^n)^*)-\mu((\phi\circ f^{n+m})^*)\right|&=\left|\mu((\phi\circ f^n)^*\circ f^m)-\mu((\phi\circ f^{n+m})^*)\right|\leq{\rm }{\rm var}_n\phi,\end{split}\]
where $\mu(\phi)$ stands for $\int\phi {\rm d}\mu$. 
Hence, $\hat\mu(\phi)=\lim_{n\to\infty}\mu((\phi\circ f^n)^*)$ exists by Cauchy's criterion. It is straightforward to check that
$\hat\mu$ belongs to the dual of $C([0,1]^3)$.
By Riesz's representation theorem, $\hat\mu$ defines an element of $M([0,1]^3)$ which we still denote by $\hat\mu$ despite the possible ambiguity.
Note that for any $\phi\in C([0,1]^3)$,
\[\hat\mu(\phi\circ f)=\lim_{n\to\infty}\mu(\phi\circ f^{n+1})=\hat\mu(\phi),\]
proving that $\hat\mu$ is $f$-invariant.

For each $\varphi\in C([0,1]^2)$
define $\hat\varphi\in C([0,1]^3)$
by $\hat\varphi(x_u,x_c,x_s)=\varphi(x_u,x_c)$.
Since $(\hat\varphi)^*=\varphi$ 
we have
$(\hat\mu)^+(\varphi)=\hat\mu(\hat\varphi)=\lim_{n\to\infty}\mu((\hat\varphi\circ f^n)^*)=\lim_{n\to\infty}\mu(\varphi\circ f_a^n)=\mu(\varphi)$, and hence
 $(\hat\mu)^+=\mu$ by Riesz's representation theorem.

It is left to show that if $\mu,\nu\in M([0,1]^3,f)$ are distinct then  $\mu^+\neq\nu^+$.
 Since Lipschitz continuous functions are dense in $C([0,1]^3)$, there is 
 $\varphi\in C([0,1]^3)$ that is Lipschitz continuous and satisfies $\mu(\varphi)\neq\nu(\varphi)$. 
Adapting the argument in the proof of \cite[Lemma~1.6]{Bow75},
 from $\varphi$ we construct a Borel function $\psi$ on $[0,1]^3$
 such that $\mu^+(\psi)\neq\nu^+(\psi)$.

 Define $r\colon[0,1]^3\to[0,1]^3$ by $r(x)=(x_u,x_c,1)$ for $x=(x_u,x_c,x_s)$.
 Define $u\colon[0,1]^3\to\mathbb R$ by
 \[u(x)=\sum_{j=0}^{\infty}(\varphi(f^j(x))-\varphi(f^jr(x))).\]
 Since the $x_s$-direction is contracting by factor $b$ or $1-Mb$ under the forward iteration of $f$, for any $x\in[0,1]^3$ and any $j\geq1$ we have 
\[|\varphi(f^j(x))-\varphi(f^jr(x))|\leq {\rm Lip}(\varphi)(\max\{b,1-Mb\})^j,\]
where ${\rm Lip}(\varphi)$ denotes the Lipschitz constant of $\varphi$.
Hence
$u$ is well-defined and continuous.
For the Borel function $\psi=\varphi+u\circ f-u$
we have
\[\begin{split}\psi(x)&=\varphi(x)+\sum_{j=0}^\infty(\varphi(f^j(f(x)))-\varphi(f^jr(f(x))))-\sum_{j=0}^\infty(\varphi(f^j(x))-\varphi(f^jr(x)))\\
&=\sum_{j=0}^\infty(\varphi(f^jr(x))-\varphi(f^jr(f(x))).
\end{split}\]
The last expression depends only on $x_u$ and $x_c$.
Therefore, $p(x)=p(y)$ implies
 $\psi(x)=\psi(y)$.
In other words, $\psi$ may be viewed as a function on $[0,1]^2$, and so $\mu(\varphi)=\mu(\psi)=\mu^+(\psi)$ and $\nu(\varphi)=\nu(\psi)=\nu^+(\psi)$. Hence we obtain
$\mu^+(\psi)\neq\nu^+(\psi)$.
Approximating $\psi$ in the $L^1$ norm, we can find $\psi_0\in C([0,1]^3)$ such that
$\mu^+(\psi_0)\neq\nu^+(\psi_0)$.
 Riesz's representation theorem
yields $\mu^+\neq\nu^+$.
 This completes the proof of (b).
\end{proof}

\subsection{When the Lebesgue measure is preserved}\label{leb-section}
For completeness we include a proof of the next elementary lemma. 
\begin{lemma}\label{preserve}\
\begin{itemize}
\item[(a)] $f_a$ preserves the Lebesgue measure on $[0,1]^2$.
\item[(b)] $f_{a,b}$ preserves the Lebesgue measure on $[0,1]^3$ if and only if $a+b=\frac{1}{M}$.
\end{itemize}
\end{lemma}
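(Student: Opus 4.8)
The plan is to exploit that both $f_a$ and $f_{a,b}$ are piecewise affine: on the interior of each domain $\Omega_\gamma^+$ (resp.\ $\Omega_\gamma$) the map is an affine injection onto its image, with constant Jacobian determinant. For any such map $T$, injective on each piece, the affine change-of-variables formula gives, for every Borel set $A$,
\[{\rm Leb}(T^{-1}(A))=\int_A\sum_{\gamma\,:\,y\in T({\rm int}\,\Omega_\gamma)}\frac{1}{J_\gamma}\,{\rm d}{\rm Leb}(y),\]
where $J_\gamma$ denotes the constant value of $|\det DT|$ on ${\rm int}\,\Omega_\gamma$. Hence $T$ preserves ${\rm Leb}$ if and only if the inner sum equals $1$ for Lebesgue-a.e.\ $y$. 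First I would record the $J_\gamma$ and the images $T({\rm int}\,\Omega_\gamma)$ on each piece; the boundaries of the $\Omega_\gamma$ form a null set and are ignored throughout.

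For part (i): on $\Omega_{\alpha_k}^+$ the map $f_a$ expands by $\frac1a$ in $x_u$ and contracts by $\frac1M$ in $x_c$, so $J_{\alpha_k}=\frac{1}{aM}$ and the image is the horizontal strip $[0,1)\times[\frac{k-1}{M},\frac{k}{M}]$; these $M$ strips partition the square. On $\Omega_{\beta_k}^+$ the map expands by $\frac{1}{1-Ma}$ in $x_u$ and by $M$ in $x_c$, so $J_{\beta_k}=\frac{M}{1-Ma}$ and the image is the whole square $[0,1)^2$ for \emph{every} $k$. Thus a.e.\ $y$ lies in exactly one $\alpha$-image and in all $M$ of the $\beta$-images, and the inner sum equals $aM+M\cdot\frac{1-Ma}{M}=aM+(1-Ma)=1$ for every $a\in(0,\frac1M)$. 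This proves (i).

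For part (ii) the extra coordinate contributes the factor $1-Mb$ on each $\Omega_{\alpha_k}$ and $b$ on each $\Omega_{\beta_k}$, so $J_{\alpha_k}=\frac{1-Mb}{aM}$ and $J_{\beta_k}=\frac{Mb}{1-Ma}$. The decisive difference from the planar case is that the $x_s$-images now separate the pieces: every $\alpha$-image has $x_s\in[0,1-Mb]$, while the $\beta$-images carry the $M$ disjoint length-$b$ intervals tiling $[1-Mb,1]$. Combined with the $(x_u,x_c)$-tiling from (i), this makes $f_{a,b}$ an a.e.\ bijection of the cube, so the inner sum has a single term at a.e.\ $y$. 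That term equals $1$ for a.e.\ $y$ precisely when $\frac{1-Mb}{aM}=1$ and $\frac{Mb}{1-Ma}=1$, and each of these equalities is equivalent to $a+b=\frac1M$. Conversely, if $a+b\neq\frac1M$ then $J_{\alpha_k}\neq1$, so taking $A$ to be one $\alpha$-image already gives ${\rm Leb}(f_{a,b}^{-1}(A))=J_{\alpha_k}^{-1}{\rm Leb}(A)\neq{\rm Leb}(A)$, violating preservation. This yields (ii).

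The only real work is the bookkeeping of the images, together with the observation that passing from $f_a$ to $f_{a,b}$ turns an $(M+1)$-to-one map into an a.e.\ bijection, which is exactly why (i) holds unconditionally while (ii) requires $a+b=\frac1M$. The one point demanding care is discarding the null boundary sets so that ``injective on each piece'' and ``images tile'' hold up to measure zero.
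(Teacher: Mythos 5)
Your proposal is correct and takes essentially the same route as the paper: the paper's proof is the same change-of-variables bookkeeping, writing $|f^{-1}(S)|$ as a sum of $|f^{-1}(S\cap f(\Omega_\gamma^+))|$ (resp.\ $|f^{-1}(S\cap f(\Omega_\gamma))|$) over the pieces with the identical piecewise-constant Jacobian factors $Ma$, $\frac{1-Ma}{M}$ in (i) and $\frac{Ma}{1-Mb}$, $\frac{1-Ma}{Mb}$ in (ii), and using the same tiling facts (each $\beta$-image is the full square in (i), the $2M$ images tile the cube in (ii)). Its necessity argument for (ii) is also the same one-piece volume comparison you make, just with $\Omega_{\beta_1}$ (where $|\Omega_{\beta_1}|=\frac{1-Ma}{M}$ must equal $|f(\Omega_{\beta_1})|=b$) instead of an $\alpha$-piece.
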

\begin{proof}
Let $|$ $\cdot$ $|$ denote the Lebesgue measure on $[0,1]^d$.
Let $a\in(0,\frac{1}{M})$ and
let $S$ be a Borel subset of $[0,1]^2$.
From the definition of $f=f_a$, for every $k\in\{1,\ldots,M\}$ we have
\[|f^{-1}(S\cap f(\Omega_{\alpha_k}^+))|=
Ma|S\cap f(\Omega_{\alpha_k}^+)|,\]
\[ |f^{-1}(S\cap f(\Omega_{\beta_k}^+))|=\frac{1-Ma}{M}|S\cap f(\Omega_{\beta_k}^+)|,\]
and
\[|S\cap f(\Omega_{\beta_k}^+)|=|S|.\]
We also have
\[\sum_{k=1}^M|S\cap f(\Omega_{\alpha_k}^+)|=|S|.\]
Using these four equalities we obtain
\[\begin{split}|f^{-1}(S)|&=\sum_{k=1}^M|f^{-1}(S\cap f(\Omega_{\alpha_k}^+))|+\sum_{k=1}^M|f^{-1}(S\cap f(\Omega_{\beta_k}^+))|\\
&=Ma\sum_{k=1}^M|S\cap f(\Omega_{\alpha_k}^+)|+\frac{1-Ma}{M}\sum_{k=1}^M|S\cap f(\Omega_{\beta_k}^+)|\\
&=\sum_{\gamma\in D}|S\cap f(\Omega_{\gamma}^+)|=|S|,\end{split}\]
as required in (a).

Let $(a,b)\in\Delta$. By the definition of $f=f_{a,b}$, clearly we have $|\Omega_{\beta_1}|=\frac{1-Ma}{M}$ and
$|f(\Omega_{\beta_1})|=b$.
If ${\rm Leb}$ is $f$-invariant then
these must be equal, namely $a+b=\frac{1}{M}$.

Conversely, suppose $a+b=\frac{1}{M}$. Let $S$ be a Borel subset of $[0,1]^3$. By the definition of $f=f_{a,b}$,
for every $k\in\{1,\ldots,M\}$ we have
\[
|f^{-1}(S\cap f(\Omega_{\alpha_k}))|=\frac{Ma}{1-Mb}|S\cap f(\Omega_{\alpha_k})|\]
and
\[|f^{-1}(S\cap f(\Omega_{\beta_k}))|=\frac{1-Ma}{Mb}|S\cap f(\Omega_{\beta_k})|.\]
Using these two equalities and $a+b=\frac{1}{M}$ we obtain
\[\begin{split}|f^{-1}(S)|&=\sum_{k=1}^M|f^{-1}(S\cap f(\Omega_{\alpha_k}))|+\sum_{k=1}^M|f^{-1}(S\cap f(\Omega_{\beta_k}))|\\
&=\frac{Ma}{1-Mb}\sum_{k=1}^M|S\cap f(\Omega_{\alpha_k})|+\frac{1-Ma}{Mb}\sum_{k=1}^M|S\cap f(\Omega_{\beta_k})|\\
&=\sum_{\gamma\in D}|S\cap f(\Omega_{\gamma})|=|S|,\end{split}\]
as required in (b).
\end{proof}

\section{Proofs of the main results}
In this last section we prove Theorems~\ref{thm-a}, \ref{thm-c}, \ref{Bernoulli-thm}
using the results in Section~3.
We first prove Theorem~\ref{Bernoulli-thm} in Section~\ref{pfthm2.8}. We then prove Theorem~\ref{thm-a} in 
Section~\ref{pfthmab-sec}. Finally
we prove Theorem~\ref{thm-c} in Section~\ref{pfthmc-sec}.

\subsection{Proof of Theorem~\ref{Bernoulli-thm}(c)}\label{pfthm2.8}
 We only give a proof of the first statement  since that of the second one is analogous. Let $a\in (0,\frac{1}{2M})$. Recall that $g_a=f_{a,\frac{1}{M}-a}$ as in \eqref{ga}.
By \eqref{l-c-leb} we have $\chi^c(g_a,{\rm Leb})>0$. Write
$\pi$ for the coding map $\pi_{a,\frac{1}{M}-a}$.

Since the collection $\{\sigma^i(\Sigma_\beta(0;\xi))\colon i\in\mathbb Z,\ \xi\in L(\Sigma_\beta)\setminus\{\emptyset\}\}$
 of cylinders generates the Borel sigma-algebra of $\Sigma_\beta$, in order to show the coincidence of two shift invariant measures on $\Sigma_\beta$ it suffices to 
show that they give equal weights to $\Sigma_\beta(0;\xi)$
 for each $\xi\in L(\Sigma_\beta)\setminus\{\emptyset\}$.

 Let $n\geq1$ and let $\xi=\xi_0\cdots\xi_{n-1}\in L(\Sigma_\beta)\setminus\{\emptyset\}$. We have  $\Sigma_\beta(0;\xi)\cap K_\beta\neq\emptyset$. 
  Let $\theta_\beta$ denote the Bernoulli measure on $\Sigma_\beta$ associated with the probability vector $(Ma,\frac{1-Ma}{M},\ldots,\frac{1-Ma}{M})$. By  Lemma~\ref{gyak-lem}(b) we have  $\theta_\beta(K_\beta)=1$, and so 
\begin{equation}\label{thm3-eq3}\begin{split}\theta_\beta(\Sigma_\beta(0;\xi)\cap K_\beta)=\theta_\beta&(\Sigma_\beta(0;\xi) )
=(Ma)^{\#\{i\in\{0,\ldots,n-1\}\colon\xi_i=\alpha\}}\\
&\times
\left(\frac{1-Ma}{M}\right)^{\#\{i\in\{0,\ldots,n-1\}\colon\xi_i\in \{\beta_1,\ldots,\beta_M\}\}}.\end{split}\end{equation}
Since $A_\beta\subset B_\beta$, Lemma~\ref{class-lem}(b) gives
 ${\rm Leb}\circ\pi^{-1}(B_\beta)={\rm Leb}\circ\pi^{-1}(A_\beta)=1$. 
We have
\begin{equation}\label{thm3-eq2}\begin{split}{\rm Leb}\circ\pi^{-1}(\phi_\beta^{-1}(\Sigma_\beta(0;\xi)) \cap B_\beta)=&{\rm Leb}\circ\pi^{-1}(\phi_\beta^{-1}(\Sigma_\beta(0;\xi)) )\\
=&(Ma)^{\#\{i\in\{0,\ldots,n-1\}\colon\xi_i=\alpha \}}\\
&\times\left(\frac{1-Ma}{M}\right)^{\#\{i\in\{0,\ldots,n-1\}\colon\xi_i\in \{\beta_1,\ldots,\beta_M\}\}}.\end{split}\end{equation}
Combining \eqref{thm3-eq3}
 and \eqref{thm3-eq2} we get
\[{\rm Leb}\circ\pi^{-1}(\phi_\beta^{-1}(\Sigma_\beta(0;\xi)) \cap B_\beta )=\theta_\beta(\Sigma_\beta(0;\xi)\cap K_\beta ),\]
which yields
  \[\begin{split}{\rm Leb}\circ\pi^{-1}\circ\phi_\beta^{-1}(\Sigma_\beta(0;\xi)\cap K_\beta )&={\rm Leb}\circ\pi^{-1}(\phi_\beta^{-1}(\Sigma_\beta(0;\xi))\cap B_\beta)\\
&=\theta_\beta(\Sigma_\beta(0;\xi)\cap K_\beta ).\end{split}\]
Since $\xi\in L(\Sigma_\beta)\setminus\{\emptyset\}$ is arbitrary, it follows that  
  $(\sigma|_{B_\beta},{\rm Leb}\circ\pi^{-1}|_{B_\beta})$ is isomorphic to $(\sigma_\beta|_{K_\beta},\theta_\beta|_{K_\beta})$ via $\phi_\beta$.
The latter is, by definition, $(Ma,\frac{1-Ma}{M},\ldots,\frac{1-Ma}{M})$-Bernoulli.
Since 
$\pi|_{\pi^{-1}(B_\beta)}$ is injective 
by Lemma~\ref{factor-prop},
the former is isomorphic to $(g_a|_{\pi^{-1}(B_\beta)},{\rm Leb}|_{\pi^{-1}(B_\beta)})$.
Hence
  $(g_a,{\rm Leb})$ is $(Ma,\frac{1-Ma}{M},\ldots,\frac{1-Ma}{M})$-Bernoulli, and so is 
  $(f_a,{\rm Leb})$.\qed

\subsection{Proof of Theorem~\ref{thm-a}}\label{pfthmab-sec}
 Item (a) is contained in \cite[Theorem~1.2]{TY23}.
Let $a\in (0,\frac{1}{M})$.
Combining Lemmas~\ref{class-lem}(c),
\ref{ruelle-ineq} and \ref{zero-u}, for any $\mu\in M_0^{\rm e}(\Lambda,f_{\frac{1}{2M},a})$ we have
$h(\mu)\leq\log\sqrt{4M}$. From this and Lemma~\ref{inverse} we get
\begin{equation}\label{pf-thma-eq1}h(\mu)\leq\log\sqrt{4M}\ \text{ for any }\mu\in M_0^{\rm e}(\Lambda,f_{a,\frac{1}{2M}}).\end{equation}
Lemma~\ref{meas-proj} implies that the set $\{\mu\circ p^{-1}\colon\mu\in M_0^{\rm e}([0,1]^3, f_{a,b})\}$ is independent of $b$. Hence, by 
 \eqref{pf-thma-eq1} and Lemma~\ref{meas-proj} again,
for any $b\in (0,\frac{1}{M})$ we have
\begin{equation}\label{pf-thma-eq2}h(\mu)\leq\log\sqrt{4M}<\log(M+1)\ \text{ for any }\mu\in M_0^{\rm e}(\Lambda,f_{a,b }).\end{equation}
From \eqref{pf-thma-eq2} and Lemma~\ref{meas-entropy} we obtain 
$M_{\rm max}^{\rm e}([0,1]^3,f_{a,b})=\{ \mu_{\alpha,a,b},\mu_{\beta,a,b}\}$, which verifies (b).
Item (c) follows from (b) and Lemma~\ref{meas-proj}. 

Theorem~\ref{Bernoulli-thm} allows us to compute the entropy 
$h(F_a,{\rm Leb})$ as follows:
\begin{equation}h(F_a,{\rm Leb})=\label{entropy-leb}\begin{cases}
-(1-Ma)\log\frac{1-Ma}{M}-Ma\log(Ma)& \text{ for }a\in (0,\frac{1}{2M});\\
-Ma\log a-(1-Ma)\log(1-Ma) & \text{ for }a\in (\frac{1}{2M},\frac{1}{M}).
\end{cases}\end{equation}
By Lemma~\ref{preserve}(b), ${\rm Leb}$ is $f_{a,b}$-invariant if and only if $a+b=\frac{1}{M}$. Hence, to prove Theorem~\ref{thm-a}(d) it suffices to show the following:
\begin{itemize}
\item[(i)] $\mu_{\alpha,a,\frac{1}{M}-a}={\rm Leb}$ if and only if $a=\frac{1}{M+1}$;

\item[(ii)] $\mu_{\beta,a,\frac{1}{M}-a}={\rm Leb}$ if and only if $a=\frac{1}{M(M+1)}$.
\end{itemize}

By 
\eqref{pf-thma-eq1}, for any $\mu\in M_0^{\rm e}(\Lambda,f_{\frac{1}{2M},a})$ we have
$h(\mu)<\log(M+1)$. From this inequality and \eqref{entropy-leb} it follows that  
$h(g_a,{\rm Leb})=\log (M+1)$ if and only if $a=\frac{1}{M+1}$ or $a=\frac{1}{M(M+1)}$. 
 In the first case, \eqref{l-c-leb} gives
 $\chi^c(g_a,{\rm Leb})<0$ and \eqref{l-c-mme} gives
$\chi^c(\mu_{\alpha,a,\frac{1}{M}-a})<0$, and so
 we obtain $\mu_{\alpha,a,\frac{1}{M}-a}={\rm Leb}$. 
 In the second case, the same reasoning yields
$\mu_{\beta,a,\frac{1}{M}-a}={\rm Leb}$. Hence (i) and (ii) hold.\qed

\subsection{Proof of Theorem~\ref{thm-c}}\label{pfthmc-sec}
 To prove (a), 
it suffices to show that for any
 $(a_*,b_*)\in \Delta$ and
for any continuous function $\varphi\colon[0,1]^3\to\mathbb R$, the map
$(a,b)\in \Delta\mapsto\int \varphi{\rm d}\mu_{\gamma,a,b}$
is continuous at $(a_*,b_*)$
for each $\gamma\in\{\alpha,\beta\}$. 
A proof of this amounts to estimating for 
$\nu_\gamma$-almost every $\omega\in A_\gamma$ the difference between 
$\pi_{a_*,b_*}^{-1}(\omega)$ and $\pi_{a,b}^{-1}(\omega)$  for all $(a,b)\in\Delta$ that is sufficiently close to $(a_*,b_*)$.

Let
$\varphi\colon[0,1]^3\to\mathbb R$ be  continuous and
let $\varepsilon>0.$
Pick $m_0>0$ such that 
\begin{equation}\label{thmc-eq0}\sup
\left\{|\varphi(x)-\varphi(y)|\colon x,y\in[0,1]^3,{\rm dist}(x,y)< 8(M+1)^{-m_0}\right\}<\varepsilon.\end{equation}
For $m$, $n\in\mathbb N_0$, define two sets
\[G_{\alpha,m,n}=\left\{\omega\in\Sigma_D\colon
\lim_{i\to\infty }H_i(\omega)= \infty\ \text{ and } \ \sup_{i\leq -n}H_i(\omega)\leq -m\right\},\]
\[G_{\beta,m,n}=\left\{\omega\in\Sigma_D\colon\sup_{i\geq n }H_i(\omega)\leq -m\ \text{ and } \ \lim_{i\to-\infty}H_i(\omega)=\infty\right\}.\]
Let $\gamma\in\{\alpha,\beta\}$.
For each fixed $m$, $G_{\gamma,m,n}$
 increases as $n$ increases.
For each fixed $n$, $G_{\gamma,m,n}$
decreases as $m$ increases.
Moreover, $G_{\gamma,m,n}$ is empty if $n<m$.
Note that 
\[A_\gamma=\bigcap_{m=0}^\infty\bigcup_{n=m}^\infty G_{\gamma,m,n}.\]
We have
 $\nu_\gamma(A_\gamma)=1$ as in \eqref{measure-nu}, and 
$\bigcup_{n=m}^\infty G_{\gamma,m,n}$
 decreases as $m$ increases.
Hence, we can pick integers $m_1> m_0$ and $N> m_1$ such that
 \begin{equation}\label{thmc-eq4}\max\{a_*,1-Ma_*,b_*,1-Mb_*\} <(M+1)^{-\frac{m_0}{m_1}},\end{equation}
 and
 \begin{equation}\label{thmc-eq5}\nu_\gamma\left(\bigcup_{n=m_1}^{N}G_{\gamma,m_1,n}\right)=\nu_\gamma(G_{\gamma,m_1,N})>1-\varepsilon.\end{equation}

To proceed we need some notation. For $x\in\mathbb R^3$ and $\delta>0$, let $R(x;\delta)$ denote the closed cube in $\mathbb R^3$ centered at $x$ all whose edges are parallel to the coordinate axes and have the Euclidean length $\delta$.
By Lemma~\ref{factor-prop}, $\pi_{a,b}^{-1}(\omega)$ is a singleton 
for $\nu_\gamma$-almost every $\omega\in A_\gamma$.
Let us still denote by $\pi_{a,b}^{-1}(\omega)$ the point in this singleton despite the possible ambiguity. 

Let $(a,b)\in\Delta$. 
For each $\omega\in G_{\gamma,m_1,N}$
consider the set 
\begin{equation}\label{cuboid}P_{a,b}(\omega)=\bigcap_{i=-N}^{N-1} f_{a,b}^{-i}(\Omega_{\omega_i}),\end{equation}
which has non-empty interior.
Since $N$ is finite,
the total number of sets of the form \eqref{cuboid} is finite.
As in the proof of Lemma~\ref{factor-prop}, for $t=u,c,s$
and two points $x=(x_u,x_c,x_s)$, $y=(y_u,y_c,y_s)$ in $[0,1]^3$,
  write
$|x-y|_t=|x_t-y_t|$.
For a subset $S$ of $[0,1]^3$ 
write ${\rm diam}^t(S)=\sup_{x,y\in S} |x-y|_t$. 
Recall that the $x_u$-direction is expanding by factor $\frac{1}{a}$ or $\frac{1}{1-Ma}$, and the $x_s$-direction is contracting by factor $1-Mb$ or $b$ under the forward iteration of $f_{a,b}$. For all $\omega\in G_{\gamma,m_1,N}$  the following estimates hold:
\[\begin{split}&{\rm diam}^u(P_{a,b}(\omega ))\leq\max\{a,1-Ma\}^{m_1};\\
&{\rm diam}^c(P_{a,b}(\omega ))\leq (M+1)^{-m_1};\\
&{\rm diam}^s(P_{a,b}(\omega ))\leq\max\{b,1-Mb\}^{m_1}.\end{split}\]
Combining these three estimates for $(a,b)=(a_*,b_*)$ with \eqref{thmc-eq4},
 we have
\[P_{a_*,b_*}(\omega )\subset {\rm int }(R(\pi_{a_*,b_*}^{-1}(\omega);2(M+1)^{-m_0})),\]
for $\nu_\gamma$-almost every $\omega\in G_{\gamma,m_1,N}$.
Since $N$ is finite and the set $\bigcup_{i=-N}^{N-1}f_{a,b}^i(\bigcup_{\gamma'\in D}\partial\Omega_{\gamma'})$ depends smoothly on the parameter $(a,b)$, there exists an open neighborhood $U$ of $(a_*,b_*)$ in $\Delta$ such that 
  we have
 \begin{equation}\label{thmc-eq7}P_{a,b}(\omega )\subset R(\pi_{a,b}^{-1}(\omega);2(M+1)^{-m_0})\ \text{  and } \  
P_{a_*,b_*}(\omega )\cap P_{a,b}(\omega)\neq\emptyset,\end{equation}
for all $(a,b)\in U$ and $\nu_\gamma$-almost every $\omega\in G_{\gamma,m_1,N}$.
From \eqref{thmc-eq7} 
 we get
\[\pi_{a,b}^{-1}(\omega)\in P_{a,b}(\omega)\subset R(\pi_{a_*,b_*}^{-1}(\omega);
2(M+1)^{-m_0}),\]
which yields the key estimate
\begin{equation}\label{thmc-eq3}{\rm dist}(\pi_{a,b}^{-1}(\omega),\pi_{a_*,b_*}^{-1}(\omega))< 8(M+1)^{-m_0},\end{equation}
 for all $(a,b)\in U$ and $\nu_\gamma$-almost every $\omega\in G_{\gamma,m_1,N}$.

To finish, let $(a,b)\in U$.
From \eqref{thmc-eq0} and \eqref{thmc-eq3}, for $\nu_\gamma$-almost every $\omega\in G_{\gamma,m_1,N}$
we have
$|\varphi(\pi_{a,b}^{-1}(\omega))-\varphi(\pi_{a_*,b_*}^{-1}(\omega))|<\varepsilon$. Therefore
\[\begin{split}\left|\int \varphi {\rm d}\mu_{\gamma,a,b}-\int \varphi {\rm d}\mu_{\gamma,a_*,b_*}\right|=&\left|\int  \varphi\circ\pi_{a,b}^{-1}{\rm d}\nu_\gamma-\int  \varphi\circ\pi_{a_*,b_*}^{-1}{\rm d}\nu_\gamma\right|\\
\leq&\int_{G_{\gamma,m_1,N}}  |\varphi\circ\pi_{a,b}^{-1}-  \varphi\circ\pi_{a_*,b_*}^{-1}|{\rm d}\nu_\gamma\\
&+\int_{ \Sigma_D\setminus G_{\gamma,m_1,N} }  |\varphi\circ\pi_{a,b}^{-1}-  \varphi\circ\pi_{a_*,b_*}^{-1}|{\rm d}\nu_\gamma \\
\leq&\varepsilon\nu_\gamma(G_{\gamma,m_1,
N})+\nu_\gamma(\Sigma_D\setminus G_{\gamma,m_1,N})\|\varphi\|\\
\leq&\varepsilon+\varepsilon\|\varphi\|,\end{split}\]
where $\|\varphi\|=\max_{x\in[0,1]^3}|\varphi(x)|.$
To deduce the last inequality we have used \eqref{thmc-eq5}.
Since 
$\varepsilon>0$ is arbitrary, it follows that
$(a,b)\in \Delta\mapsto\int \varphi{\rm d}\mu_{\gamma,a,b}$
is continuous at $(a_*,b_*)$, which
 verifies (a).
 Item (b) follows from (a) and Lemma~\ref{meas-proj}.
 \qed

 \appendix
\def\thesection{\Alph{section}}
\section{Some variants of the heterochaos baker maps}\label{app-A}
It is natural to ask to what extent our results can be generalized to piecewise affine maps that are  variants of the heterochaos baker maps. As one of the simplest variants,
for an integer $N\geq2$ with $M\neq N$
and $a\in(0,\frac{1}{M})$
consider a piecewise affine map $f_{(a,N)}\colon[0,1]^2\to[0,1]^2$ defined as follows:
(i) $f_{(a,N)}|_{[0,Ma)\times[0,1]}=f_a|_{[0,Ma)\times[0,1]}$;
(ii) Split $[Ma,1]\times[0,1]$ into $N$ rectangular domains appropriately with the $x_c$-diameter $\frac{1}{N}$. Then $f_{(a,N)}$ maps each of these domains to cover $(0,1)^2$ without flip and rotation.
The map $f_{(a,3)}$ with $M=2$ is shown in
\textsc{Figure}~\ref{homburg}. 

For $f_{(a,N)}$,
extending the argument in \cite{STY21} one can show that the set of $1$-unstable periodic points and that of $2$-unstable periodic points are dense in $[0,1]^2$. 
We suspect that the left shift acting on
the coding space of $f_{(a,N)}$ determined by the natural partition of $[0,1]^2$ is intrinsically ergodic, and so is $f_{(a,N)}$:
 the dynamics of $f_{(a,N)}$ is heterochaotic on topological level and not so on ergodic level.
Homburg and Kalle \cite{HK22} considered
iterated function systems intimately related to the dynamics of $f_{(a,N)}$ in the $x_c$-coordinate.


\begin{figure}
\begin{center}
\includegraphics[height=4.4cm,width=12cm]{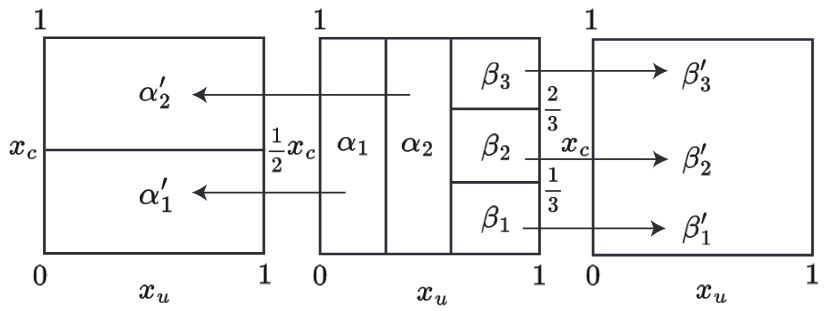}
\caption
{The piecewise affine map $f_{(a,3)}\colon[0,1]^2\to[0,1]^2$ with $M=2$. }\label{homburg}
\end{center}
\end{figure}

\section{Cycles arising from heterochaos horseshoe maps}
Piecewise smooth dynamical systems may be models of 
real world phenomena in time evolution, 
and real significance may be accorded only to those models or properties that are robust under perturbations of systems. 
Clearly the heterochaos baker maps are not robust.

 One can deform the baker-like map as in \textsc{Figure}~\ref{horseshoe} into a structurally stable horseshoe map.
    For the heterochaos baker map on $[0,1]^3$, 
   we can perform a similar deformation to obtain a diffeomorphism $\phi$ that robustly exhibits the unstable dimension variability \cite{STY22}.
Moreover, $\phi$ has a heterodimensional cycle \cite{bonatti_1996,BonDia08,D95} that has some robustness under small perturbations of the system.
   In this appendix we recall the definition of $\phi$ in \cite{STY22}, and
    comment on the existence of robust cycles 
    with a comparison to earlier different developments.

\begin{figure}
\centering
\includegraphics[width=.6\textwidth]{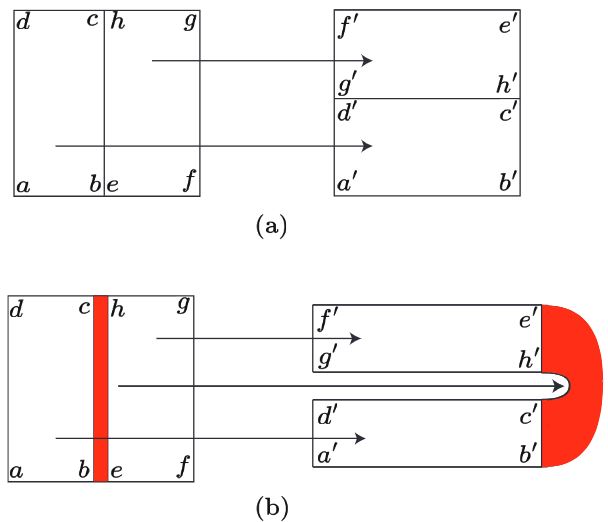}
\caption{(a) A baker-like map 
 from $[0,1]^2$ into itself. (b) An affine horseshoe map on $[0,1]^2$ to $\mathbb R^2$. The vertex $a$ is mapped to $a'$, and so on. Squeezing the folded part recovers the baker-like map.}\label{horseshoe}
\end{figure}

A {\it block} is a Cartesian product of three non-degenerate compact intervals in $\mathbb R^3$ all of whose sides are parallel to one of the axes of coordinates of $\mathbb R^3$.
Given two blocks $X$ and $Y$, we say $X$ {\it stretches across} $Y$
if $X$ does not intersect the edges of $Y$ and $X\setminus Y$ has two connected components.
With a slight abuse of notation,
 Let $\Omega_{\alpha_1}$, $\Omega_{\alpha_2}$, $\Omega_{\beta_1}$, $\Omega_{\beta_2}$ be pairwise disjoint blocks
in $[0,1]^3$ such that
 $\Omega_{\alpha_2}$ is a translate of $\Omega_{\alpha_1}$ in the $x_u$-direction and
 $\Omega_{\beta_2}$ is a translate of $\Omega_{\beta_1}$
 in the $x_c$-direction. Similarly,
 let $\Omega_{\alpha_1^\prime}$, $\Omega_{\alpha_2^\prime}$, $\Omega_{\beta_1^\prime}$, $\Omega_{\beta_2^\prime}$
be pairwise disjoint 
blocks in $[0,1]^3$
such that $\Omega_{\alpha_2^\prime}$ is a translate of $\Omega_{\alpha_1^\prime}$ in the $x_c$-direction and
 $\Omega_{\beta_2^\prime}$ is a translate of $\Omega_{\beta_1^\prime}$
 in the $x_s$-direction.
We assume:

\begin{itemize}
 \item[(i)] $\Omega_{\alpha_1^\prime}$  stretches across $\Omega_{\alpha_1}$ and $\Omega_{\alpha_2}$, and $\Omega_{\alpha_2^\prime}$ stretches across $\Omega_{\alpha_1}$ and $\Omega_{\alpha_2}$.

\item[(ii)] $\Omega_{\beta_1}$ stretches across $\Omega_{\beta_1^\prime}$ and $\Omega_{\beta_2^\prime}$, and $\Omega_{\beta_2}$ stretches across $\Omega_{\beta_1^\prime}$ and $\Omega_{\beta_2^\prime}$.

\item[(iii)] 
$\Omega_{\alpha_1^\prime}$ stretches across $\Omega_{\beta_1}$, and $\Omega_{\alpha_2^\prime}$ stretches across $\Omega_{\beta_2}$.

\item[(iv)] $\Omega_{\alpha_1}$ stretches across $\Omega_{\beta_1^\prime}$ and $\Omega_{\beta_2^\prime}$, and
$\Omega_{\alpha_2}$ stretches across $\Omega_{\beta_1^\prime}$ and $\Omega_{\beta_2^\prime}$.
\end{itemize}
\noindent See \textsc{Figure}~\ref{fig:coupledhorseshoe}(a), (b), (c), (c) respectively. 





Let $\phi$ be a $C^1$ diffeomorphism on $\mathbb R^3$ onto its image
that maps $\Omega_{\alpha_1}$, $\Omega_{\alpha_2}$, $\Omega_{\beta_1}$, $\Omega_{\beta_2}$ affinely to $\Omega_{\alpha_1^\prime}$, $\Omega_{\alpha_2^\prime}$, $\Omega_{\beta_1^\prime}$, $\Omega_{\beta_2^\prime}$ respectively
with diagonal Jacobian matrices, see \textsc{Figure}~\ref{fig:coupledhorseshoe}(d).
Condition
(i) implies that the restriction
$\phi|_{\Omega_{\alpha_1}\cup \Omega_{\alpha_2}}$ is a horseshoe map
whose unstable dimension is $1$. Condition
 (ii) implies that
$\phi|_{\Omega_{\beta_1}\cup \Omega_{\beta_2}}$
is a horseshoe map  whose unstable dimension is $2$. 
Conditions (iii) (iv) determine how these two horseshoe maps are coupled.
Let us call $\phi$ a 
{\it heterochaos horseshoe map}. 

\begin{figure}
\centering
\includegraphics[width=.9\textwidth]{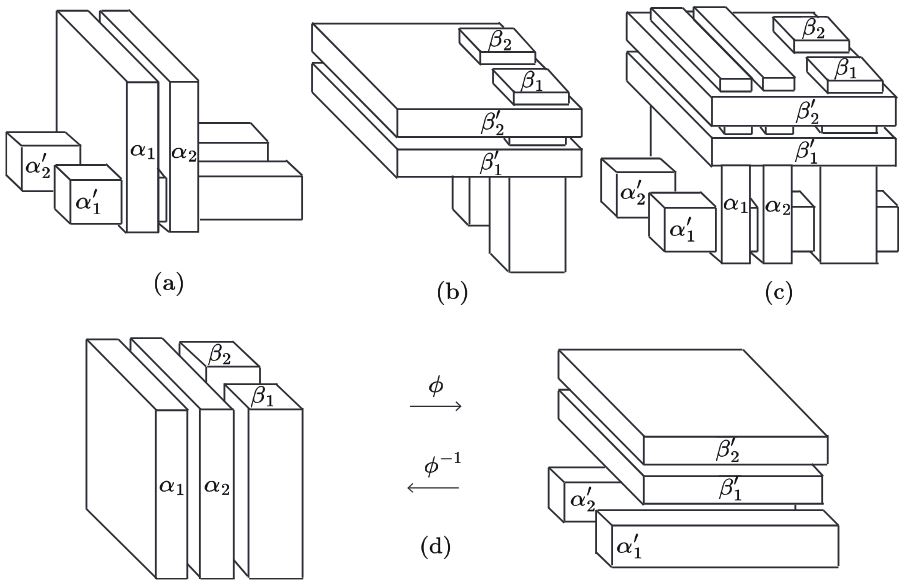}
\caption{The heterochaos horseshoe map $\phi$. Each of the four blocks labeled with $\gamma\in\{\alpha_1,\alpha_2,\beta_1,\beta_2\}$ is mapped by $\phi$ to the block labeled with $\gamma'$.}
\label{fig:coupledhorseshoe}
\end{figure}

If $\varphi$ is another diffeomorphism that is sufficiently $C^1$ close to $\phi$,
the compact $\varphi$-invariant set
\[\Lambda=\bigcap_{n=-\infty}^{\infty} \varphi^{-n}(\Omega_{\alpha_1}\cup \Omega_{\alpha_2}\cup \Omega_{\beta_1}\cup \Omega_{\beta_2})\]
contains two closed $\varphi$-invariant sets
$\Gamma=\Gamma(\varphi)$ and $\Sigma=\Sigma(\varphi)$ given by
\[\Gamma=\bigcap_{n=-\infty}^{\infty} \varphi^{-n}(\Omega_{\alpha_1}\cup \Omega_{\alpha_2})\quad\text{and}\quad 
\Sigma=\bigcap_{n=-\infty}^{\infty} \varphi^{-n}(\Omega_{\beta_1}\cup \Omega_{\beta_2}),\]
which are hyperbolic sets of unstable dimensions $1$ and $2$ respectively.
Extending the arguments in \cite{STY21} one can show the transitivity, as well as the density of hyperbolic periodic points in $\Lambda$ with different unstable dimensions.
 If $\varphi$ is $C^2$, the Lebesgue measure of $\Lambda$ is zero.

Let $W^u(\Gamma)$ (resp. $W^s(\Sigma)$)
denote the union of the unstable (resp. stable) manifolds of points in $\Gamma$ (resp. $\Sigma)$, namely, 
\[\begin{split}W^u(\Gamma)&=\{x\in\mathbb R^3\colon \exists y\in\Gamma \text{ s.t. } {\rm dist}(\varphi^n(x),\varphi^n(y))\to0\text{ as }n\to-\infty\},\\
W^s(\Sigma)&=\{x\in\mathbb R^3\colon \exists y\in\Sigma \text{ s.t. } {\rm dist}(\varphi^n(x),\varphi^n(y))\to0\text{ as }n\to\infty\},\end{split}\]
where ${\rm dist}(\cdot,\cdot)$ denotes the Euclidean distance on $\mathbb R^3$.
  These sets
 intersect each other as depicted in \textsc{Figure}~\ref{fig:heteroclinic-intersection}. All their intersections are quasi-transverse, while
  $W^s(\Gamma)$ intersects $W^u(\Sigma)$  transversely. Therefore, the two hyperbolic sets $\Gamma$ and $\Sigma$ are cyclically related.

\begin{figure}
\centering
\includegraphics[height=0.5\linewidth,width=0.5\linewidth]{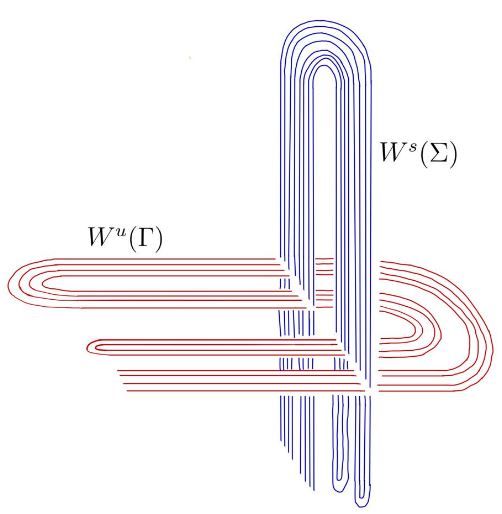}
\caption{The sets $W^u(\Gamma)$, $W^s(\Sigma)$ for $\varphi$ $C^2$-close to $\phi$ and their fractal intersection.
 The sets $W^u(\Gamma)$ and $W^s(\Sigma)$ are contained in locally invariant $C^2$ surfaces}.
\label{fig:heteroclinic-intersection}
\end{figure}

 We say a diffeomorphism $\psi$ on $\mathbb R^3$
 has a {\it heterodimensional cycle} associated to its compact transitive hyperbolic sets $\Psi$ and $\Upsilon$ if the unstable dimensions of  $\Psi$ and $\Upsilon$ are different, and
 $W^s(\Psi)\cap W^u(\Upsilon)\neq\emptyset$ and $W^s(\Upsilon)\cap W^u(\Psi)\neq\emptyset$.
 Since non-transverse intersections in heterodimensional cycles
 can easily be destroyed by perturbations of diffeomorphisms, the following notion is useful.
A heterodimensional cycle of $\psi$ associated to transitive hyperbolic sets $\Psi$ and $\Upsilon$ is {\it $C^r$-robust} \cite{BonDia08}
if there is a $C^r$ neighborhood $\mathcal V$ of $\psi$
such that any diffeomorphism in $\mathcal V$
has a heterodimensional cycle associated to the 
continuations of $\Psi$ and $\Upsilon$.
By Kupka-Smale's theorem, if $\psi$ has a $C^r$-robust heterodimensional cycle associated to $\Psi$ and $\Upsilon$, either $\Psi$ or $\Upsilon$ is a non-trivial hyperbolic set.

Robust heterodimensional cycles are often constructed by perturbations. It may be useful to exhibit a concrete example of a diffeomorphism
that has a robust heterodimensional cycle. 
 If $\phi$ satisfies additional hypotheses, 
  then it has a $C^2$-robust heterodimensional cycle
 associated to $\Gamma$ and $\Sigma$. 
For details, see \cite{STY22}.

There is an earlier, different line  of research where the intersection analysis of invariant manifolds are
carried out using {\it blenders}.
For a comparison,
among the many types of blenders 
the one in \cite{ACW21} has an advantage
in that an affine
horseshoe and its perturbations were also considered.
For other definitions of blenders, see e.g., \cite{BCDW16,bonatti_1996} and 
\cite[Section~6.2]{BDV04}.
Key properties of a $d_{cs}$-stable (resp. $d_{cu}$-unstable) blender, in dimension three for example, is that a related one-dimensional local  stable (resp. unstable) manifolds form
a `topological surface' which intersects any curve `transverse' to it, and that this property is $C^1$-robust. 
These key properties are used to blend $C^1$-robust intersections between one-dimensional invariant manifolds.
For a diffeomorphism $\varphi$ on $\mathbb R^3$ that is sufficiently $C^2$-close to $\phi$, 
 the local stable manifolds of
 $\Sigma$ 
 are contained in a $C^2$ surface
\cite[Proposition~3.3]{STY22}.
 Hence, $\Sigma$ is not a $d_{cs}$-stable blender: the first key property obviously fails.
 Similarly,
 $\Gamma$ is not a $d_{cu}$-unstable blender since its local unstable manifolds 
 are contained in a $C^2$ surface.

 For heterochaos horseshoe maps
 satisfying some additional assumptions,
 using blenders
  one can construct $C^1$-robust heterodimensional cycles associated to hyperbolic sets other than $\Gamma$, $\Sigma$.
  For details, see \cite[Appendix]{STY22}.

\subsection*{Acknowledgments}
We thank the referee for his/her careful reading of the manuscript and giving valuable suggestions.
We thank Shigeki Akiyama, 
Masayuki Asaoka, Hiroshi Kokubu, 
Yushi Nakano, 
Mitsuhiro Shishikura, Toshi Sugiyama, Naoya Sumi, Masato Tsujii for fruitful discussions.
Part of this work have been done while YS and HT were visiting University of Maryland, HT and KY were visiting University of Tsukuba, and JAY was visiting Keio University for the international conference `KiPAS Dynamics Days 2023' (August 27-31, 2023). YS was supported by the JSPS KAKENHI 19KK0067, 23H04465, 24K00537 and the JSPS Bilateral Open Partnership Joint Research Projects JPJSBP 120229913.
HT was supported by the JSPS KAKENHI 19K21835, 19KK0067, 23K20220 and the Keio Gijuku Academic Development Funds. 
KY was supported by the JSPS KAKENHI
21K03321.
 This work was supported by Research Institute for Mathematical Sciences,
an International Joint Usage/Research Center located in Kyoto University.

\subsection*{Declarations}\

\

{\bf Conflict of interest} \ The authors declare no conflict of interest.

 \bibliographystyle{amsplain}

\end{document}